\newtheorem{thm}{Theorem}[section]
\crefname{thm}{Theorem}{Theorems}
\newtheorem{lem}[thm]{Lemma}
\crefname{lem}{Lemma}{Lemmas}
\newtheorem{conj}[thm]{Conjecture}
\newtheorem*{conj*}{Conjecture}
\crefname{conj}{Conjecture}{Conjectures}
\crefname{claim}{Claim}{Claims}
\newtheorem{prop}[thm]{Proposition}
\crefname{prop}{Proposition}{Propositions}
\newtheorem{cor}[thm]{Corollary}
\crefname{cor}{Corollary}{Corollaries}
\crefname{property}{Property}{Properties}
\crefname{que}{Question}{Questions}
\theoremstyle{definition}
\newtheorem{defn}[thm]{Definition}
\crefname{defn}{Definition}{Definitions}
\newtheorem{rmk}[thm]{Remark}
\crefname{rmk}{Remark}{Remarks}
\newtheorem*{ack}{Acknowledgements}
\numberwithin{equation}{section}
\crefname{ex}{Example}{Examples}
\numberwithin{equation}{section}
\def\C{{\mathbb C}}
\def\Q{{\mathbb Q}}
\def\R{{\mathbb R}}
\def\Z{{\mathbb Z}}
\def\P{{\mathbb P}}
\def\A{{\mathbb A}}
\def\Gm{{\mathbb{G}}_{{\mathrm m}}}
\def\QQ{\overline{\mathbb Q}}
\def\O{{ \mathcal{O}}}
\def\n{{ \mathfrak{n}}}
\def\E{{ \mathcal{E}}}
\DeclareMathOperator{\pr}{pr}
\DeclareMathOperator{\Pic}{Pic}
\DeclareMathOperator{\Spec}{Spec}
\DeclareMathOperator{\Proj}{Proj}
\DeclareMathOperator{\Supp}{Supp}
\DeclareMathOperator{\Sym}{Sym}
\DeclareMathOperator{\Nef}{Nef}
\DeclareMathOperator{\Eff}{Eff}
\renewcommand{\d}{\delta}
\newcommand{\f}{\varphi}
\renewcommand{\k}{\kappa}
\newcommand{\G}{\Gamma}
\newcommand{\hh}{\hat{h}}
\title[Kawaguchi-Silverman conjecture]
{Recent advances on Kawaguchi-Silverman conjecture}
\author{Yohsuke Matsuzawa}
\address{Department of Mathematics, Graduate School of Science, Osaka Metropolitan University, 3-3-138, Sugimoto, Sumiyoshi, Osaka, 558-8585, Japan}
\email{\href{mailto:matsuzaway@omu.ac.jp}{matsuzaway@omu.ac.jp}}
\begin{document}

\begin{abstract}
This is a survey on Kawaguchi-Silverman conjecture.
\end{abstract}

\maketitle

\setcounter{tocdepth}{2}

\tableofcontents

\section{Introduction}

The Kawaguchi-Silverman conjecture predicts the equivalence of the geometric complexity 
of self-maps and the arithmetic complexity of orbits of rational points.
Observations essential to the conjecture were already made in the 1990s, 
but the conjecture itself was explicitly formulated and stated around 2011 by Kawaguchi and Silverman, 
building on their earlier work. 
Following its formulation, people began to prove many special cases of the conjecture 
and study arithmetic degrees, a key concept within the conjecture. 
In this survey, we introduce the conjecture, outline some basic observations, and collect results that affirm special cases of the conjecture.
We have done our best to cover all results directly related to the conjecture, but we might have missed a few things.

\vspace{5mm}

{\bf Notation.}

\begin{enumerate}
\item
A variety over a field $k$ is (unless otherwise specified) a separated irreducible reduced scheme of finite type over $k$.

\item
Let $X$ be a projective variety over an algebraically closed field.
For Cartier divisors (resp.\ $\Q$ or $\R$-Cartier divisors) $D, E$ on $X$,
the linear equivalence (resp.\ $\Q$ or $\R$-linear equivalence) is denoted by
$D \sim E$ (resp.\ $D \sim_{\Q} E$ or $D \sim_{\R} E$).
They are said to be numerically equivalent if $(D\cdot C) = (E \cdot C)$ for all irreducible curves $C \subset X$.
In this case, we write $D \equiv E$.
The group of Cartier divisors modulo numerical equivalence is denoted by $N^{1}(X)$.
We write $N^{1}(X)_{\Q} = N^{1}(X) {\otimes}_{\Z} \Q$ and $N^{1}(X)_{\R} = N^{1}(X) {\otimes}_{\Z} \R$.

\item
The Kodaira dimension of a variety $X$ is denoted by $\k(X)$.
For a Cartier divisor $D$ on a projective variety $X$, 
its Iitaka dimension is denoted by $\k(X, D)$.

\item
For a dominant rational map $f \colon X \dashrightarrow X$ on a projective variety $X$ defined over an algebraically closed field
of characteristic zero, the $i$-th dynamical degree of $f$ is denoted by $\d_{i}(f)$ ($0 \leq i \leq \dim X$).

\item
For an $\R$-Cartier divisor $D$ on a projective variety $X$ defined over $\QQ$,
$h_{D}$ denotes a logarithmic Weil height function associated with $D$. 
We write $h_{D}^{+} = \max\{1, h_{D}\}$.

\end{enumerate}

\section{Kawaguchi-Silverman Conjecture}

\subsection{Arithmetic degree}

Let  $X$ be a quasi-projective variety defined over an algebraic closure of rational numbers $\QQ$.
Let us consider a dominant rational self-map $f \colon X \dashrightarrow X$ defined over $\QQ$.
As $f$ is a rational map and not necessarily a morphism, there might be points at which $f$ is not defined.
The smallest closed subset $I_{f} \subset X$ such that $f$ is a morphism on $X \setminus I_{f}$ is called the indeterminacy locus of $f$.
To discuss the dynamics of $f$, let us introduce the following notation.

\begin{defn}
The set of $\QQ$-points whose forward $f$-orbit is well defined is denoted by $X_{f}(\QQ)$:
\begin{align}
X_{f}(\QQ) := \left\{ x \in X(\QQ) \ \middle|\ \forall n \geq 0, f^{n}(x) \notin I_{f}  \right\}.
\end{align}
For a point $x \in X_{f}(\QQ)$, its $f$-orbit is 
\begin{align}
O_{f}(x) = \left\{ f^{n}(x) \ \middle|\ n \geq 0 \right\}.
\end{align}
\end{defn}

As $X_{f}(\QQ)$ is defined by countably many conditions,
it is not obvious that this set is non-empty.
It is, however, known that this set contains many points (\cite[Corollary 9]{Am11}, \cite[Proposition 3.24]{Xi22}).
For example, there is a non-empty adelic open subset $A \subset X(\QQ)$ (in the sense of Xie \cite{Xi22}) such that
$A \subset X_{f}(\QQ)$ and the $f$-orbit of every $x \in A$ is infinite.

A point $x \in X(\QQ)$ can be represented using a finite number of bits 
(for instance, by taking the coordinates and considering the minimal polynomial of each coordinate). 
In typical situations, the number of bits required to represent $f^{n}(x)$ increases as $n$ goes to infinity. 
The Kawaguchi-Silverman conjecture relates this growth rate with the geometric complexity of $f$, the dynamical degree. 
Specifically, it predicts that the arithmetic complexity of any dense orbit equals the geometric complexity of the map $f$. 
For more insight into the idea behind the conjecture, see \cref{sec:understand-ksc}.

The arithmetic complexity is measured by using Weil height functions.
While there are many ways to define height function,
we present the shortest one here. 
For more about the definitions and fundamentals on height functions, see 
\cite{HS00,BG06,La83}.

\begin{defn}[Weil height functions]\ 
\begin{enumerate}
\item
Let $K$ be a number field.
For a point $x = (a_{0}:\cdots : a_{N}) \in \P^{N}(\QQ)$ with $a_{i} \in K$, we define 
\begin{align}
h_{\P^{N}}(x) = \sum_{v \in M_{K}} \log \max\{ |a_{0}|_{v}, \dots, |a_{N}|_{v} \}
\end{align}
where $M_{K}$ is the set of absolute values on $K$ normalized as in \cite[p11, (1.6)]{BG06}.
That is, if $v$ is a place of $K$ whose restriction on $\Q$ is $p$ ($p$ is a prime number or $\infty$),
then
\begin{align}
|a|_{v} = |N_{K_{v}/\Q_{p}}(a)|_{p}^{1/[K:\Q]}
\end{align}
where $|\ |_{p}$ is the usual $p$-adic absolute value with $|p|_{p}=p^{-1}$ when $p$ is prime 
and $|\ |_{\infty}$ is the usual Euclidean absolute value on $\Q$.
The value $h_{\P^{N}}(x)$ is independent of the choice of $K$ and the homogeneous coordinates.
Thus $h_{\P^{N}} \colon \P^{N}(\QQ) \longrightarrow \R$ is a well-defined function, which is called 
the naive height function.

\item
Let $X$ be a projective variety over $\QQ$ and let $D$ be a Cartier divisor on $X$.
We attach $D$ a real valued function on $X(\QQ)$ that is well-defined up to difference by bounded functions.
Write $D = H_{1} - H_{2}$ with very ample divisors $H_{1}$ and $H_{2}$ on $X$.
Take closed immersions $\f_{i} \colon X \longrightarrow \P^{N_{i}}_{\QQ}\ (i=1,2)$
such that $\f_{i}^{*}\O_{\P^{N_{i}}}(1) \simeq \O_{X}(H_{i})$. 
Then we define 
\begin{align}
h_{D} = h_{\P^{N_{1}}} \circ \f_{1} -  h_{\P^{N_{2}}} \circ \f_{2}.
\end{align}
For another choice of $H_{i}$'s and $\f_{i}$'s, this function changes at most a bounded function.
Moreover, if $D$ and $D'$ are linearly equivalent, then $h_{D} - h_{D'}$ is also bounded on $X(\QQ)$.
This correspondence $D \mapsto h_{D}$ defines a group homomorphism
\begin{align}
&\{ \text{Cartier divisors on $X$} \}/\{ \text{principal divisors} \}  \\
&\longrightarrow {\mathrm{Map}}(X(\QQ), \R)/\{\text{bounded functions}\}.
\end{align}
For $\R$-Cartier divisors, we extend by linearity (jsut $ {\otimes}_{\Z} \R$ in the left hand side).
\end{enumerate}

\end{defn}

Now let us introduce the arithmetic degree.

\begin{defn}
Let $X$ be a quasi-projective variety over $\QQ$ and let $f \colon X \dashrightarrow X$ be a dominant rational map.
Let us fix an immersion $i \colon X \longrightarrow P$ to a projective variety $P$ over $\QQ$ and fix an ample divisor $H$ on $P$.
Let $h_{H} \colon P(\QQ) \longrightarrow \R$ be a logarithmic Weil height function associated with $H$. 
We set $h_{H}^{+} := \max\{1, h_{H}\}$.
For a point $x \in X_{f}(\QQ)$, the arithmetic degree of $x$ is
\begin{align}
\alpha_{f}(x) := \lim_{n \to \infty} h_{H}^{+}(f^{n}(x))^{1/n}
\end{align}
provided the limit exists.
As we mention below, we do not know if this limit exists always.
We use the following notation and call them upper and lower arithmetic degrees:
\begin{align}
&\overline{\alpha}_{f}(x) := \limsup_{n \to \infty}h_{H}^{+}(f^{n}(x))^{1/n}\\
& \underline{\alpha}_{f}(x) := \liminf_{n \to \infty} h_{H}^{+}(f^{n}(x))^{1/n}.
\end{align}
\end{defn}

\begin{rmk}
The definition apparently depends on the choice of immersion $i \colon X \longrightarrow P$,
the ample divisor $H$, and the height function $h_{H}$, but it is actually independent of all of them.
The independence of latter two is immediate from the general fact on height functions. 
That is, if $H$ and $H'$ are ample divisors on $P$ and $h_{H}$ and $h_{H'}$ are arbitrary choice of 
height functions associated with $H$ and $H'$ respectively, then
there are constants $C \geq 1$ and $C' \geq 0$ such that
\begin{align}
\frac{1}{C} h_{H} - C' \leq h_{H'} \leq C h_{H} + C'
\end{align}
on $P(\QQ)$.
It is easy to see from these inequalities that the definition is independent of the choice of $H$ and $h_{H}$.
The independence of the immersion $i \colon X \longrightarrow P$ follows from \cite[Lemma 3.8]{JSXZ21}.
\end{rmk}

We quickly recall the definition of dynamical degrees.

\begin{defn}[Dynamical degree]
Let $X$ be a projective variety over $\QQ$ and let $f \colon X \dashrightarrow X$ be a dominant rational map.
For each positive integer $n$, consider the following diagram
 \[
\xymatrix{
&\G_{n} \ar@{}[d]|\bigcap \ar@/_10pt/[ldd]_{\pi_{n}} \ar@/^10pt/[rdd]^{F_{n}}&\\
&X \ar[ld]_{\pr_{1}} \times X \ar[rd]^{\pr_{2}} &\\
X \ar@{-->}[rr]_{f^{n}}&& X
}
\]
where $\G_{n} \subset X \times X$ is the graph of $f^{n}$. 
Let us fix an ample divisor $H$ on $X$.
For an integer $p$ between $0$ and $\dim X$, the $p$-th dynamical degree of $f$ is
\begin{align}
\d_{p}(f) := \lim_{n \to \infty} \left( F_{n}^{*}H^{p} \cdot \pi_{n}^{*}H^{\dim X -p}  \right)_{\G_{n}}^{1/n}.
\end{align}
It is known that this limit exists and independent of the choice of $H$ (cf.\ \cite{DS05,Da20,Tr20}).
Moreover, $\d_{p}(f)$ are invariant under birational conjugation.
We define dynamical degrees of dominant rational self-maps on quasi-projective varieties 
by taking a projective model.
\end{defn}

Note that if $f \colon X \longrightarrow X$ is a surjective self-morphism on a projective variety over $\QQ$,
then $\d_{1}(f)$ is equal to the spectral radius of $f^{*} \colon N^{1}(X)_{\R} \longrightarrow N^{1}(X)_{\R}$,
i.e.\ the maximum of the absolute value of the eigenvalues of $f^{*}$.

\subsection{Kawaguchi-Silverman conjecture}

Kawaguchi-Silverman conjecture is first stated for dominant rational self-maps on
projective spaces in \cite[Conjecture 1]{Sil12} and later formulated for arbitrary smooth projective varieties 
in \cite[Conjecture 6]{KS16b}.

\begin{conj}[Kawaguchi-Silverman Conjecture --original version-- {\cite[Conjecture 6]{KS16b}}]\label{KSC}
Let $X$ be a smooth projective variety and $f \colon X \dashrightarrow X$ a dominant rational map
both defined over $\QQ$.
\begin{enumerate}
\item
For all $x \in X_{f}(\QQ)$, the arithmetic degree $ \alpha_{f}(x)$ exists, i.e.\ the limit 
\begin{align}
\lim_{n \to \infty} h_{H}^{+}(f^{n}(x))^{1/n}
\end{align}
exists. 
\item
$ \alpha_{f}(x)$ is an algebraic integer.

\item
The collection of arithmetic degrees
\begin{align}
\left\{ \alpha_{f}(x) \ \middle|\ x \in X_{f}(\QQ) \right\}
\end{align}
is a finite set.

\item
If the orbit $O_{f}(x)$ is Zariski dense in $X$, then $ \alpha_{f}(x) = \d_{1}(f)$.

\end{enumerate}
\end{conj}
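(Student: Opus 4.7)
The plan is to split the conjecture into the morphism case, where the classical Kawaguchi--Silverman machinery essentially closes parts (1)--(3), and the rational-map case, where serious geometric obstacles appear; part (4), the dense-orbit statement, is the main obstacle and I would attack it by trying to produce an $f$-invariant proper subvariety from ``slow'' height growth.

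\textbf{Parts (1)--(3), morphism case.} First I would assume $f \colon X \to X$ is a surjective morphism, so that $f^{*}$ acts functorially on $N^{1}(X)_{\R}$ with $(f^{n})^{*} = (f^{*})^{n}$. The workhorse is the functoriality $h_{f^{*}D} = h_{D} \circ f + O(1)$, combined with the Kawaguchi-type height inequality
\begin{align}
\bigl|h_{D}(f(x)) - \lambda h_{D}(x)\bigr| \leq C\,\sqrt{h_{H}^{+}(x)}
\end{align}
valid for any ample $D$ with $f^{*}D \equiv \lambda D$ and any fixed ample reference $H$. A telescoping argument then forces $\lim_{n} h_{H}^{+}(f^{n}(x))^{1/n}$ to exist and to equal an eigenvalue of $f^{*}$; running this across a Jordan basis of $f^{*}$ handles (1), (2), and (3) in one stroke, since the eigenvalues form a finite set of algebraic integers (roots of the characteristic polynomial of $f^{*}$ on the integral lattice $N^{1}(X)$).

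\textbf{Passage to rational maps.} The obstruction is twofold: the height identity $h_{f^{*}D} = h_{D} \circ f + O(1)$ holds only off the indeterminacy locus $I_{f}$, and more importantly $f^{*}$ is not functorial on $N^{1}$, so $(f^{n})^{*} \neq (f^{*})^{n}$ in general. My approach would be to pass to a resolution $\pi \colon \widetilde{X} \to X$ with lift $\widetilde{f}$ and seek an algebraically stable birational model on which the composition identity is restored; combined with the graphs $\G_{n}$ from the definition of $\d_{p}(f)$, one can then hope to reproduce the morphism argument. Such models exist on surfaces by Diller--Favre, but in higher dimension they are generally absent, which is precisely where this reduction fails.

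\textbf{Part (4).} The direction $\overline{\alpha}_{f}(x) \leq \d_{1}(f)$ is known unconditionally, so the work is concentrated in the reverse inequality for Zariski-dense orbits, and this is where I expect the argument to stall. My plan is by contradiction: assuming $\overline{\alpha}_{f}(x) < \d_{1}(f)$, construct a canonical height
\begin{align}
\widehat{h}_{D}(y) = \lim_{n \to \infty} \frac{h_{D}(f^{n}(y))}{\d_{1}(f)^{n}}
\end{align}
for a suitable nef $\R$-divisor $D$ with $f^{*}D \equiv \d_{1}(f)\, D$, prove $\widehat{h}_{D} \geq 0$ and $\widehat{h}_{D}(f(y)) = \d_{1}(f)\, \widehat{h}_{D}(y)$, and then identify the vanishing locus with a proper $f$-invariant Zariski-closed subvariety that must contain $O_{f}(x)$, contradicting density. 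The hardest ingredients are (a) producing the Perron--Frobenius eigen-divisor $D$ for a general $f$---already non-trivial beyond Picard rank one, since it requires substantial positivity input on the action of $f^{*}$ on the nef cone---and (b) showing that the vanishing locus of $\widehat{h}_{D}$ is actually Zariski closed, which can fail when the error in the canonical-height construction is not controlled uniformly. Realistically I would only hope to settle (4) in geometrically favourable regimes (surfaces, abelian varieties, Fano varieties, Picard rank one, polarised endomorphisms) where both (a) and (b) can be supplied by hand.
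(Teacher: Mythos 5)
There is a fundamental mismatch here: the statement you are attacking is a \emph{conjecture}, not a theorem, and the paper does not prove it --- indeed it cannot be proven in the form stated. As the survey discusses immediately after \cref{KSC}, parts (2) and (3) are \emph{false} for general dominant rational maps: Lesieutre and Satriano (\cref{thm:inf-set-of-ad}) produce a birational self-map of $\P^{4}_{\QQ}$ admitting points whose arithmetic degrees form an infinite set, disproving (3), and Matsuzawa--Wang exhibit a dominant rational self-map of $\P^{2}_{\QQ}$ with a point whose arithmetic degree equals $\d_{1}(f)$ and is transcendental, disproving (2). So any ``proof'' of the conjecture as stated is doomed from the outset, and a complete answer should have begun by flagging these counterexamples rather than treating the rational-map case as merely ``where this reduction fails.''

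That said, the portions of your proposal that are salvageable line up well with what the paper records as known. Your morphism-case argument --- functoriality of $f^{*}$ on $N^{1}(X)$, the Kawaguchi-type inequality $|h_{D}(f(x)) - \lambda h_{D}(x)| \leq C\sqrt{h_{H}^{+}(x)}$, telescoping, and identification of $\alpha_{f}(x)$ with $1$ or the modulus of an eigenvalue of $f^{*}$ --- is precisely the content of \cref{thm:ad-of-mor} (Kawaguchi--Silverman), and your emphasis on the failure of $(f^{n})^{*} = (f^{*})^{n}$ for rational maps, and the role of algebraically stable models, correctly locates why the morphism argument does not transfer. Your sketch for part (4) via a Perron--Frobenius nef eigendivisor $D$, the nef canonical height $\hh_{D,f}$, and the Northcott-type non-density of its sublevel sets is exactly the strategy the paper develops in \cref{sec:canonical-height}, together with the two obstacles you name (choosing $D$ with enough positivity, and controlling the vanishing locus). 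You correctly observe that this only closes part (4) in favourable regimes (polarized maps, surfaces, abelian and semi-abelian varieties, Mori dream spaces, Hyper-K\"ahler varieties, etc.); part (4) --- reformulated as \cref{a=dconj} --- remains open in general, and no blind proof attempt should have been expected to close it. The one genuine gap is the omission of the counterexamples to (2) and (3).
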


Understanding the growth rate of height functions along orbits is a fundamental subject in arithmetic dynamics. 
The arithmetic degree is the simplest measure of this growth rate, and the Kawaguchi-Silverman conjecture predicts that it is governed by the geometry of $f$. 
Given the foundational nature of this conjecture, we believe it has potential applications to other problems within arithmetic dynamics.
Indeed, there are already non-trivial applications of studying arithmetic degrees to the Zariski dense orbit conjecture 
(Zhang and Medvedev-Scanlon's conjecture) \cite{JSXZ21,MW22}. 
Additionally, it's worth mentioning that the study of the Kawaguchi-Silverman conjecture has stimulated 
deeper understanding of the structure of self-maps on varieties.

Let us first remark that part (2) and (3) of the conjecture 
are already disproven.
Lesieutre and Satriano constructed a counter example to part (3)
using a family of birational self-maps on surfaces whose dynamical degrees take infinitely many values.

\begin{thm}[{\cite[Theorem 2]{LS20}}]\label{thm:inf-set-of-ad}
Let $f \colon \P^{4}_{\QQ} \dashrightarrow \P^{4}_{\QQ}$
be the birational map defined by
\begin{align}
[X:Y:Z:A:B] \mapsto [XY+AX : YZ + BX : XZ : AX : BX ].
\end{align}
Then there exists a sequence of points $P_{n} \in (\P^{4}_{\QQ})_{f}(\QQ)$ for which 
$ \alpha_{f}(P_{n})$ exists and $\{ \alpha_{f}(P_{n})\}_{n}$ is an infinite set.
\end{thm}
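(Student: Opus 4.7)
The plan is to exhibit a fibration structure of $f$ and reduce the problem to producing infinitely many fibers whose dynamical degrees pairwise differ. In the affine chart $B \neq 0$ with coordinates $(x,y,z,w) := (X/B, Y/B, Z/B, A/B)$, a direct computation gives
\begin{align}
f\colon (x,y,z,w) \longmapsto \Bigl(y+w,\ \tfrac{yz}{x} + 1,\ z,\ w \Bigr),
\end{align}
so the last two coordinates are preserved by $f$. Thus $f$ is a two-parameter family of birational surface maps
\begin{align}
\phi_{z_{0},w_{0}}\colon (x,y) \longmapsto \Bigl(y+w_{0},\ \tfrac{y z_{0}}{x} + 1\Bigr)
\end{align}
acting on the fibers $\{z=z_{0},\ w=w_{0}\} \cong \A^{2}$. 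The orbit $O_{f}(P)$ of any $\QQ$-point $P=(x_{0},y_{0},z_{0},w_{0})$ is contained in a single fiber, so $\a_{f}(P)$ is governed by the dynamics of $\phi_{z_{0},w_{0}}$, and the problem reduces to constructing a sequence $(z_{n},w_{n}) \in \Q^{2}$ such that the surface maps $\phi_{z_{n},w_{n}}$ have pairwise distinct dynamical degrees, each realized by an arithmetic degree on the corresponding fiber.

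The next step is to analyze each $\phi_{z_{0},w_{0}}$. For fixed $(z_{0},w_{0})$, I would resolve the indeterminacy of $\phi_{z_{0},w_{0}}$ by a sequence of blowups of $\P^{2}$, tracking the forward orbit of the base points until $\phi_{z_{0},w_{0}}$ becomes algebraically stable on the resulting smooth surface $S_{z_{0},w_{0}}$, and then compute $\d_{1}(\phi_{z_{0},w_{0}})$ as the spectral radius of the induced action on $N^{1}(S_{z_{0},w_{0}})_{\R}$. The essential combinatorial input is to select $(z_{n},w_{n})$ so that the orbits of the indeterminacy points do not close up for a growing number of steps, forcing the Picard rank of $S_{z_{n},w_{n}}$ to grow with $n$ and producing infinitely many distinct spectral radii. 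Accidental coincidences can be ruled out by a monotonicity argument (arranging $\d_{1}(\phi_{z_{n},w_{n}}) \to \infty$) or by a number-theoretic bound on the complexity of each spectral radius as an algebraic integer, whose degree can be made to grow with $n$.

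Once such a sequence of fibers is in hand, on each I would pick a rational point $P_{n}$ whose $\phi_{z_{n},w_{n}}$-orbit is well-defined, Zariski dense in the fiber, and unbounded in height. By Kawaguchi's canonical-height machinery for algebraically stable surface maps, applied to $S_{z_{n},w_{n}}$ (pairing a nef eigenvector of $\phi_{z_{n},w_{n}}^{*}$ of eigenvalue $\d_{1}(\phi_{z_{n},w_{n}})$ against the orbit), one deduces $\a_{f}(P_{n}) = \d_{1}(\phi_{z_{n},w_{n}})$, so $\{\a_{f}(P_{n})\}_{n}$ is infinite. The main obstacle is the geometric step above: certifying that the spectral radii are pairwise distinct requires uniform control of how the blowup combinatorics, and in particular the orbit lengths of the indeterminacy points, depend on $(z_{0},w_{0})$; this is the technical heart of the construction. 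The rest—adelic genericity to guarantee well-defined orbits, and the canonical-height identification $\a_{f}=\d_{1}$ on the resolved surface—then follows from now-standard height theory.
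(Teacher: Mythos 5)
The plan correctly identifies the key structural fact: in the chart $B\neq 0$, the last two affine coordinates $z=Z/B$, $w=A/B$ are $f$-invariant, so $f$ is a two-parameter family of birational surface maps $\phi_{z,w}(x,y)=(y+w,\,yz/x+1)$, and the problem reduces to finding infinitely many parameter values whose fiber maps have pairwise-distinct first dynamical degrees that are realized as arithmetic degrees on the fiber. This is indeed the architecture of the Lesieutre--Satriano proof, so the route is essentially the same as the paper's.

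That said, two points need fixing. First, a conceptual slip: you propose to choose $(z_n,w_n)$ so that the orbits of the indeterminacy points ``do not close up,'' but what you actually need is the opposite. The fiber map $\phi_{z,w}$ becomes an automorphism of a finite blowup of $\P^2$ precisely when the forward orbits of the contracted points land, after finitely many steps, on the indeterminacy points --- i.e.\ the orbits \emph{do} close up into finite chains. Only then is there a smooth projective surface $S_n$ on which $\phi_{z_n,w_n}$ acts as an automorphism, and only for automorphisms does Kawaguchi's canonical height theory (cf.\ \cref{thm:autos}) deliver $\alpha=\delta_1$ for dense orbits. Your appeal to ``Kawaguchi's machinery for algebraically stable surface maps'' overreaches: for algebraically stable but non-invertible birational surface maps, the Jonsson--Reschke canonical height exists but its non-vanishing is still open (see the remark following \cref{thm:autos}), so the argument genuinely requires the fiber maps to become automorphisms. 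Second, the ``technical heart'' you flag --- producing a sequence of parameters with pairwise-distinct dynamical degrees --- is exactly what cannot be waved through: the paper does not redo the blowup combinatorics from scratch but cites the Bedford--Kim/Diller analysis of this family of quadratic Cremona maps (linear fractional recurrences), which shows that for a discrete set of parameter values the exceptional orbit has length $n$, yielding an automorphism of a rational surface of Picard rank $n+O(1)$ whose dynamical degree $\lambda_n$ is the largest root of an explicit degree-$n$ polynomial, and these $\lambda_n$ are pairwise distinct. Without that input (or an equivalent), the proposal is a correct outline rather than a proof.
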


Recently, Bell, Diller, and Jonsson constructed an example of 
dominant rational self-map whose first dynamical degree is a transcendental number \cite{BDJ20}
(cf.\ \cite{BDJK21} as well).
Wang and the author gave a counter example to part (2)
by showing that the map admits a point whose arithmetic degree is equal to the first dynamical degree.

\begin{thm}[{\cite[Theorem D]{MW22}}]
There is a dominant rational self-map $f \colon \P^{2}_{\QQ} \dashrightarrow \P^{2}_{\QQ}$
and $x \in (\P^{2}_{\QQ})_{f}(\QQ)$ such that $ \alpha_{f}(x)$ exists, $ \alpha_{f}(x) = \d_{1}(f)$, and
$ \alpha_{f}(x)$ is a transcendental number.
\end{thm}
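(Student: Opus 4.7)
The plan is to combine two ingredients: the Bell--Diller--Jonsson construction of a dominant rational self-map $f\colon\P^{2}_{\QQ}\dashrightarrow\P^{2}_{\QQ}$ whose first dynamical degree $\delta_{1}(f)$ is transcendental, and a dynamical-arithmetic input confirming part (4) of \cref{KSC} for this particular $f$. Once a Zariski dense orbit $O_{f}(x)$ is found realizing $\alpha_{f}(x)=\delta_{1}(f)$, transcendence of $\delta_{1}(f)$ immediately forces $\alpha_{f}(x)$ to be transcendental, contradicting part (2) of \cref{KSC}.

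First I would fix a BDJ-type example. These birational self-maps of $\P^{2}$ can, after a blowup $\pi\colon X\to\P^{2}$ along the indeterminate orbits, be lifted to \emph{algebraically stable} birational self-maps on $X$: that is, $(f^{n})^{*}=(f^{*})^{n}$ on $N^{1}(X)_{\R}$. Under algebraic stability, $\delta_{1}(f)$ equals the spectral radius of $f^{*}$ on $N^{1}(X)_{\R}$, and a Perron--Frobenius-type argument yields an eigenclass $D\in\Nef(X)_{\R}$ with $f^{*}D\equiv\delta_{1}(f)\cdot D$. Existence of a Zariski dense forward orbit in $X_{f}(\QQ)$ is guaranteed by the adelic open-set statement of Xie cited earlier in the survey, so one can pick such an $x$.

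Second, I would establish that $\alpha_{f}(x)$ exists and equals $\delta_{1}(f)$. The upper bound $\overline{\alpha}_{f}(x)\le\delta_{1}(f)$ for rational self-maps with controlled resolution of indeterminacy is a standard consequence of Kawaguchi--Silverman-type inequalities, applied on $X$. For the matching lower bound, the plan is to build a canonical height $\hat h_{D}$ attached to the eigenclass $D$, satisfying
\[
\hat h_{D}\circ f=\delta_{1}(f)\cdot \hat h_{D}+O(1)
\]
on the locus where $f$ is defined, to show $\hat h_{D}\ge 0$ on $X(\QQ)$ using nefness/bigness of $D$, and to verify $\hat h_{D}(x)>0$ for a Zariski dense orbit. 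From the functional equation one then reads off both existence of $\lim_{n\to\infty}h_{H}^{+}(f^{n}(x))^{1/n}$ and the equality with $\delta_{1}(f)$.

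The main obstacle is the construction and convergence of $\hat h_{D}$ when $\delta_{1}(f)$ is transcendental. The usual Tate-style telescoping
\[
\hat h_{D}(x)=\lim_{n\to\infty}\delta_{1}(f)^{-n}h_{D}(f^{n}(x))
\]
requires the error $h_{D}\circ f-\delta_{1}(f)\cdot h_{D}$ to be small relative to $\delta_{1}(f)^{n}$, and one cannot invoke linear-recurrence or Perron-number arithmetic because $\delta_{1}(f)$ has no minimal polynomial over $\Q$. One must therefore argue purely analytically via the action of $f^{*}$ on $\Nef(X)_{\R}$, bounding the error class $f^{*}D-\delta_{1}(f)D$ against an auxiliary ample class and comparing with iterates of $f^{*}$ on a complement to the Perron eigenline. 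Once this analytic convergence and the strict positivity $\hat h_{D}(x)>0$ are in place, the BDJ choice with $\delta_{1}(f)\notin\QQ$ produces the desired pair $(f,x)$.
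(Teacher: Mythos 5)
There is a genuine gap, and it lies exactly in the step you flag as the ``main obstacle'' but then plan to push through by analytic convergence estimates: proving $\hat h_{D}(x)>0$ for a Zariski dense orbit. The construction of a canonical height for birational surface maps is due to Jonsson and Reschke \cite{JR18}, and the survey states explicitly (see the remark at the end of \cref{auto-surf-hk}, and \cite[Remark 4.11]{MW22}) that it is still \emph{not} known whether this canonical height is nonzero, i.e.\ whether one can find $x$ with $\hat h_{D}(x)>0$. So the positivity you want to ``verify'' in your plan is an open problem, and it is precisely the difficulty that \cite{MW22} designed its argument to circumvent. There is also a secondary problem: nefness of the Perron eigenclass $D$ does not give a Northcott-type property, and in this setting $D$ will not be big (if it were, the strategies of \cref{sec:canonical-height} would apply directly and the problem would not be hard); the automorphism case of \cref{auto-surf-hk} escapes this via bigness of $D_{+}+D_{-}$, but the BDJ maps are not automorphisms and $f^{-1}$ is only rational.

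The actual route in \cite{MW22} avoids canonical heights altogether. It runs through \cref{thm:cohhyp} and \cref{rmk:cohhyp-genorb}: for a birational self-map of a smooth projective surface, $1$-cohomological hyperbolicity is just $\d_{1}(f)>1$; for any $1$-cohomologically hyperbolic rational map, \cite[Theorem B]{MW22} produces a point whose orbit is \emph{generic} (not merely Zariski dense, so that intersections with any proper closed subset are finite), and \cite[Theorem A / Corollary 1.8]{MW22} then gives $\alpha_{f}(x)$ exists and equals $\d_{1}(f)$ directly from inequalities on the height sequence $h_{H}(f^{n}(x))$, with no canonical height intervening. Applying this to a BDJ map with transcendental $\d_{1}(f)$ yields the theorem. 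Your proposal and the paper agree on the input (BDJ example, adelic open sets, $\overline{\alpha}_{f}\le\d_{1}$), but the lower bound argument you sketch would require solving the open positivity problem for the Jonsson--Reschke height, whereas the paper sidesteps it by working with generic orbits and height-sequence estimates.
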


The existence of the limit defining arithmetic degree (\cref{KSC}(1)) has been known
for morphisms (i.e.\ $I_{f} =  \emptyset$) since when the conjecture was formulated.
Moreover, when $f$ is a morphism, part (2)(3) are actually true.

\begin{thm}[{\cite[Theorem 3, Remark 23]{KS16a}}]\label{thm:ad-of-mor}
Let $X$ be a normal projective variety and let $f \colon X \longrightarrow X$ be a surjective self-morphism
both defined over $\QQ$.
\begin{enumerate}
\item
For any $x \in X(\QQ)$, $ \alpha_{f}(x)$ exists and is an algebraic integer.
\item
The set
\begin{align}
\left\{ \alpha_{f}(x) \ \middle|\ x \in X(\QQ) \right\}
\end{align}
is a finite set.
\end{enumerate}
More precisely, $ \alpha_{f}(x)$ is either $1$ or 
the absolute value of an eigenvalue of $f^{*} \colon N^{1}(X)_{\Q} \longrightarrow N^{1}(X)_{\Q}$.
\end{thm}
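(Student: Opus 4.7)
My approach is to analyze $h_H^+(f^n(x))$ via the Jordan normal form of $\phi := f^*$ on $N^1(X)_{\R}$. Fix an ample divisor $H$ on $X$; by the remark preceding the theorem, $\d := \d_1(f)$ equals the spectral radius of $\phi$, and since $\phi$ preserves the nef cone, Perron-Frobenius yields an eigenvector $v_+\in\Nef(X)_{\R}$ with $\phi(v_+)=\d v_+$, so $\d$ itself is a real positive eigenvalue.

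\emph{Upper bound.} I first establish $\overline{\a}_f(x)\le\d$. Choose a basis of $N^1(X)_{\R}$ by ample classes with a fixed norm $\|\cdot\|$. Writing any $\R$-Cartier $D$ as a difference of positive combinations of the basis yields a uniform comparison $|h_D(y)|\le C\|D\|\,h_H^+(y)+C\|D\|$. Applied to $D=(f^n)^*H$, combined with functoriality $h_H\circ f^n=h_{(f^n)^*H}+O_n(1)$ and the Jordan-form bound $\|\phi^n\|=O(\d^n n^{r_0})$ (where $r_0$ is the maximal block size at eigenvalues of modulus $\d$), this gives $h_H^+(f^n(x))=O(\d^n n^{r_0})$, hence $\overline{\a}_f(x)\le\d$.

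\emph{Canonical heights (main step).} Working over $\C$, for each eigenvalue $\mu$ of $\phi$ with $|\mu|\ge 1$ and each position $k$ in a Jordan block for $\mu$ (with basis $e_{\mu,k}$ satisfying $\phi(e_{\mu,k})=\mu e_{\mu,k}+e_{\mu,k-1}$, $e_{\mu,-1}:=0$), I construct a canonical height $\hh_{\mu,k}\colon X(\QQ)\to\C$ that agrees up to a bounded function with a Weil height of a $\C$-Cartier lift of $e_{\mu,k}$ and satisfies $\hh_{\mu,k}\circ f=\mu\,\hh_{\mu,k}+\hh_{\mu,k-1}$. These are built by Tate-style telescoping limits, solved inductively on $|\mu|$ (largest first) and on $k$, with convergence of the series ensured by the upper bound above after normalizing by $\mu^n n^k$. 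Iterating the functional equation yields
\begin{align*}
\hh_{\mu,k}(f^n(x))\;=\;\sum_{j=0}^{k}\binom{n}{k-j}\mu^{n-(k-j)}\,\hh_{\mu,j}(x).
\end{align*}
Crucially, the canonical height $\hh_{v_+}$ attached to the Perron-Frobenius eigenvector is nonnegative on $X(\QQ)$, since negativity would combine with $\hh_{v_+}\circ f=\d\,\hh_{v_+}$ to contradict the lower bound $h_{v_+}\ge -O(1)$.

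\emph{Conclusion and main obstacle.} Expanding $[H]$ in the Jordan basis gives
\begin{align*}
h_H(f^n(x))\;=\;\sum_{\mu,\,k}P_{\mu,k}(n;x)\,\mu^n+O(1),
\end{align*}
a finite sum of exponential-polynomial terms whose coefficients are $\C$-linear combinations of the $\hh_{\mu,j}(x)$. Using the nonnegativity of the Perron-Frobenius canonical height and the real-positivity of $\d$ to rule out destructive oscillation at the dominant scale, one shows $h_H^+(f^n(x))^{1/n}$ converges to $\max(1,|\mu_*|)$, where $\mu_*$ is the largest-modulus eigenvalue producing a nonvanishing contribution along the orbit of $x$. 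Since $\phi$ is $\Z$-linear on $N^1(X)$, its eigenvalues and their absolute values (because $|\mu|^2=\mu\bar\mu$) are algebraic integers; this yields both existence in (1) and algebraic integrality, while (2) follows from the finiteness of the eigenvalue set. The main technical hurdle is proving convergence of the Tate limits for Jordan blocks of size $>1$: the coupled recursion among $\hh_{\mu,j}$ for $j\le k$ makes naive telescoping fail, forcing a Pascal-triangle-like bookkeeping of error terms with the correct $n^k$-scaling and leaning on the uniform bound for differences of heights attached to linearly equivalent divisors.
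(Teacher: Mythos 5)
Your overall strategy --- decompose $f^*$ into Jordan blocks, construct canonical heights for the Jordan vectors by Tate-style telescoping, and read off the limit from the resulting exponential--polynomial expansion --- is in fact the approach of the cited reference \cite{KS16a} (whose very title mentions ``dynamical canonical heights for Jordan blocks''). So you have the right roadmap.

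However, there is a genuine gap that the proposal glosses over, and it is precisely the technical core of that paper. The Jordan basis $\{e_{\mu,k}\}$ lives in $N^1(X)_{\C}$, i.e.\ at the level of numerical equivalence, whereas the Weil height machine is functorial only up to \emph{linear} equivalence. A $\C$-Cartier lift $D_{\mu,k}$ of $e_{\mu,k}$ satisfies $f^*D_{\mu,k} - \mu D_{\mu,k} - D_{\mu,k-1}\equiv 0$ only \emph{numerically}; the discrepancy is a numerically trivial divisor (essentially an element of $\Pic^0$), and heights of such divisors are not $O(1)$ but only $O\bigl(\sqrt{h_H^+}\bigr)$. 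Consequently the functional equation $\hh_{\mu,k}\circ f=\mu\hh_{\mu,k}+\hh_{\mu,k-1}$, the comparison of $\hh_{\mu,k}$ with a Weil height, and even your upper-bound argument (where $(f^n)^*H$ may drift in the $\Pic^0$-direction as $n$ grows, a drift not controlled by the $N^1$-norm $\|\phi^n\|$) all carry error terms of size $O\bigl(\sqrt{h_H^+(f^n(x))}\bigr)$ rather than the $O(1)$ you assume when you invoke the ``uniform bound for differences of heights attached to linearly equivalent divisors.'' These errors, growing like $\d^{n/2}$, are subdominant and can be absorbed, but tracking them through the coupled telescoping is exactly what makes the proof nontrivial, and your proposal does not address it. Relatedly, your justification for nonnegativity of $\hh_{v_+}$ rests on $h_{v_+}\ge -O(1)$ for a merely nef (not ample) class, which is also not automatic without this more careful bookkeeping; the correct lower bound is again only up to an error growing with $h_H^+$.
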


We already know that parts (2) and (3) of \cref{KSC} are not generally true for rational maps, 
and parts (1) - (3) have been proven for morphisms. 
Therefore, main interest lies in part (4), namely $ \alpha_{f}(x) = \d_{1}(f)$ for dense orbits. 
Nonetheless, it is also intriguing to investigate which maps satisfy parts (2) and (3), 
given that these parts still hold true for certain special rational maps like monomial maps. 
Further, the existence of the arithmetic degree (part (1) of \cref{KSC}) for rational maps remains a challenge. 
But for dense orbits, this part is typically proven during the process of proving $ \alpha_{f}(x) = \d_{1}(f)$."

In this note, when we refer to the ``Kawaguchi-Silverman conjecture", it primarily means this $ \alpha_{f} = \d_{1}(f)$ equality. 
While the Kawaguchi-Silverman conjecture is originally stated only for smooth projective varieties, there is no specific reason to exclude singular varieties.

To simplify the exposition, let us introduce the following conjecture, which is the essential part of the Kawaguchi-Silverman conjecture.
\begin{conj}[Kawaguchi-Silverman $\alpha=\delta$ conjecture]\label{a=dconj}
Let $X$ be a quasi-projective variety defined over $\QQ$.
Let $f \colon X \dashrightarrow X$ be a dominant rational map.
If $x \in X_{f}(\QQ)$ has Zariski dense orbit, then $ \alpha_{f}(x)$ exists and $ \alpha_{f}(x) = \d_{1}(f)$.
\end{conj}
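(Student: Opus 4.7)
The plan is to reduce to a convenient projective model and then establish the two inequalities $\overline{\alpha}_{f}(x) \leq \d_{1}(f)$ and $\underline{\alpha}_{f}(x) \geq \d_{1}(f)$ separately. First I would choose a projective compactification $\overline{X}$ of $X$ with an ample divisor $H$ and, whenever possible, replace $f$ by a lift to a birational model on which $f$ is algebraically stable, i.e.\ $(f^{n})^{*} = (f^{*})^{n}$ on $N^{1}(\overline{X})_{\R}$; then $\d_{1}(f)$ is realized as the spectral radius of $f^{*}$ acting on $N^{1}(\overline{X})_{\R}$. Since arithmetic and dynamical degrees are appropriately birationally invariant (and Zariski-density of orbits is preserved under birational conjugation), this reduction is legitimate when available; it is unconditional in dimension $\leq 2$ after Diller--Favre, but in higher dimensions it is a genuine hypothesis that would have to be justified case by case.

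For the upper bound, the standard functoriality of Weil heights yields
\begin{align}
h_{H}(f(y)) \leq h_{f^{*}H}(y) + O(1) \qquad \text{on } \overline{X}(\QQ)\setminus I_{f},
\end{align}
and iterating along the orbit of $x$ (which avoids $I_{f}$ by hypothesis) together with algebraic stability gives
\begin{align}
h_{H}(f^{n}(x)) \leq C_{\epsilon}\bigl(\d_{1}(f)+\epsilon\bigr)^{n}\bigl(h_{H}^{+}(x)+1\bigr)
\end{align}
for any $\epsilon > 0$, where $C_{\epsilon}$ comes from a Perron--Frobenius norm bound for $(f^{*})^{n}$ on $N^{1}(\overline{X})_{\R}$. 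Taking $n$-th roots and sending $n \to \infty$, $\epsilon \to 0$, we obtain $\overline{\alpha}_{f}(x) \leq \d_{1}(f)$. The error terms are controlled using the comparison of height functions across models, e.g.\ via \cite[Lemma 3.8]{JSXZ21}.

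The lower bound $\underline{\alpha}_{f}(x) \geq \d_{1}(f)$ for dense orbits is where the real difficulty lies. The natural strategy is to construct a canonical height $\hh \colon \overline{X}(\QQ) \to \R$ satisfying $\hh \circ f = \d_{1}(f)\hh$ and $\hh \geq h_{H} - O(1)$: then any $x$ with $\hh(x) > 0$ automatically has $\underline{\alpha}_{f}(x) \geq \d_{1}(f)$, and the task reduces to excluding $\hh(x)=0$ for points with Zariski-dense orbit. When $\d_{1}(f)$ is a simple dominant eigenvalue of $f^{*}$ on $N^{1}(\overline{X})_{\R}$ with a nef (better, big) eigenclass $\theta$, one sets
\begin{align}
\hh_{\theta}(y) := \lim_{n \to \infty} \d_{1}(f)^{-n} h_{\theta}(f^{n}(y));
\end{align}
existence of the limit follows from a telescoping estimate powered by the upper bound above, and the functional equation is then automatic. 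The crucial step, $\hh_{\theta}(x) > 0$ for every $x$ with Zariski-dense orbit, can be approached by arithmetic equidistribution in the spirit of Yuan--Zhang, which identifies $\{\hh_{\theta} = 0\}$ with a union of $f$-preperiodic subvarieties, or alternatively by analyzing $f$-invariant fibrations (dynamical Albanese, the Iitaka fibration associated to $\theta$) and inducting on $\dim X$.

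\textbf{The main obstacle} is that this canonical-height blueprint collapses precisely in the scenarios where the conjecture remains open: when $\d_{1}(f)$ is not a simple eigenvalue of $f^{*}$ on $N^{1}(\overline{X})_{\R}$, when no eigenclass is representable by an effective or even nef divisor, or when no algebraically stable birational model exists. The Bell--Diller--Jonsson examples with transcendental $\d_{1}(f)$ demonstrate that the first two failures genuinely occur, already on $\P^{2}$. Circumventing them will plausibly require working in infinite-dimensional frameworks such as $b$-divisors or pluripotential-theoretic currents, and this is where I expect a full proof to demand fundamentally new ideas rather than refinement of existing techniques.
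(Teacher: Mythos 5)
This statement is an open conjecture, not a theorem, and the paper offers no proof of it; the survey's entire purpose is to collect partial results and strategies toward it. Your write-up is therefore correctly calibrated: you lay out a strategy, not a proof, and you explicitly identify the obstacles that block a complete argument. That said, a few points deserve correction or sharpening.

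First, the upper bound $\overline{\alpha}_{f}(x) \leq \d_{1}(f)$ does \emph{not} require algebraic stability or any reduction to a stable birational model. This inequality is \cref{thm:fund-ineq} in the paper, proved unconditionally for arbitrary dominant rational self-maps of quasi-projective varieties over $\QQ$ (see \cite[Theorem 1.4]{Ma20a} and \cite[Proposition 3.11]{JSXZ21}). The actual argument goes through a submultiplicativity estimate on height growth across graphs $\G_n$ of $f^n$, together with the characterization of $\d_1(f)$ as the limit of $(\deg f^n)^{1/n}$-type quantities, rather than a Perron--Frobenius bound on a single stable model. Tying the upper bound to algebraic stability makes it look more conditional than it actually is, and in higher dimensions stable models need not exist at all, so this is not a safe route.

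Second, your canonical-height blueprint for the lower bound is exactly the strategy the survey centers on (\cref{sec:canonical-height}), and the obstacles you flag are the right ones. The paper spells out the three requirements on $\hh$ (comparison from above by an ample height, the functional equation $\hh \circ f = \d_1(f)\,\hh$, and the weak Northcott property $(*)$) and notes that the first and third hold automatically when $\hh$ is itself an ample height. The real difficulty, as you say, is verifying $(*)$ when the eigenclass is only nef, and the paper's \cref{prop:weak-Northcott} gives sufficient criteria (positive Iitaka dimension, or effectivity in two distinct ways on a $\Q$-factorial variety). One small caveat: you invoke Yuan--Zhang equidistribution as a potential tool for showing $\{\hh_\theta = 0\}$ is preperiodic, but this machinery needs arithmetic positivity (typically a big and semipositive adelic metric) that is precisely what fails when $\theta$ is only nef; the paper instead works with divisor-theoretic positivity (\cref{prop:weak-Northcott}) or pairs of eigenclasses whose sum is big (\cref{auto-surf-hk}), which is a more robust approach in the nef setting. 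Finally, your closing paragraph correctly identifies the Bell--Diller--Jonsson transcendental examples and the absence of stable models as genuine barriers, which matches the survey's assessment that a full proof will require new ideas.
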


\begin{rmk}[Smooth vs singular]
At this moment, we do not know if we can reduce the conjecture to the case where $X$ is smooth.
It is always possible to take a resolution of singularities of $X$, i.e.\ projective birational morphism
$\pi \colon X' \longrightarrow X$ with $X'$ is a smooth quasi-projective variety.
Since $\pi$ is birational, $f$ induces a dominant rational map $f' \colon X' \dashrightarrow X'$ 
such that $f \circ \pi = \pi \circ f'$.
But we do not know if all Zariski dense orbits of $f$ comes from that of $f'$, that is
for any point $x \in X_{f}(\QQ)$, it is not obvious if there is some $l \geq 0$ and $x' \in X_{f'}(\QQ)$
such that $f^{l}(x) = \pi(x')$. 
Moreover, even if this is the case, we do not know if the existence of arithmetic degrees
$ \alpha_{f}(x)$ and $ \alpha_{f'}(x')$ are equivalent. See \cref{prop:semconj-by-gene-fin} for a special case of such equivalence.
\end{rmk}

\begin{rmk}[Other ground fields]\label{rmk:other-fld}
The notion of height functions can be generalized to settings beyond number fields.
It is well known that the Weil height machine is available over fields with proper set of absolute values satisfying product formula.
One typical example of such fields are function fields. 
But if the coefficient field is infinite, we do not have the Northcott finiteness property and
the exact analogy of \cref{a=dconj} does not hold.
See \cite{MSS18b} and \cite[section 2]{Xi23} for more about this direction.
In this survey, we focus on number fields as this is the case that has been most extensively investigated.
\end{rmk}

Let us introduce the following fundamental inequality between arithmetic degree and dynamical degree.

\begin{thm}[{\cite{KS16b},\cite[Theorem 1.4]{Ma20a},\cite[Proposition 3.11]{JSXZ21}}]\label{thm:fund-ineq}
Let $X$ be a quasi-projective variety defined over $\QQ$.
Let $f \colon X \dashrightarrow X$ be a dominant rational map.
For $x \in X_{f}(\QQ)$, we have
\begin{align}
\overline{\alpha}_{f}(x) \leq \d_{1}(f).
\end{align}
\end{thm}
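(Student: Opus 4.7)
The plan is to bound $h_{H}(f^{n}(x))$ above by roughly $(\d_{1}(f)+\epsilon)^{n} h_{H}^{+}(x)$ uniformly in $n$, and then take $n$-th roots. First, using the immersion $i \colon X \longrightarrow P$ built into the definition of $\alpha_{f}$, we reduce to the case where $X$ is projective. Fix an ample divisor $H$ on $X$, and for each $n \geq 1$ form the graph $\G_{n} \subset X \times X$ of $f^{n}$ with its two projections $\pi_{n}, F_{n} \colon \G_{n} \longrightarrow X$; after resolving singularities we may assume $\G_{n}$ is smooth. Since $x \in X_{f}(\QQ)$ keeps its orbit away from every indeterminacy locus $I_{f^{n}}$, the point $x$ admits a unique lift $\tilde{x}_{n} \in \G_{n}(\QQ)$ with $\pi_{n}(\tilde{x}_{n}) = x$ and $F_{n}(\tilde{x}_{n}) = f^{n}(x)$.

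Because $\pi_{n}$ and $F_{n}$ are morphisms, functoriality of Weil heights gives
\[
h_{H}(f^{n}(x)) = h_{F_{n}^{*}H}(\tilde{x}_{n}) + O(1), \qquad h_{H}(x) = h_{\pi_{n}^{*}H}(\tilde{x}_{n}) + O(1),
\]
and the core technical step is to produce, for every $\epsilon > 0$, a constant $C_{\epsilon} > 0$ and an integer $n_{0}$ such that for every $n \geq n_{0}$ and every $y \in \G_{n}(\QQ)$,
\[
h_{F_{n}^{*}H}(y) \leq C_{\epsilon} (\d_{1}(f) + \epsilon)^{n}\, h_{\pi_{n}^{*}H}^{+}(y) + C_{\epsilon}.
\]
By definition of $\d_{1}(f)$, the intersection number $(F_{n}^{*}H \cdot \pi_{n}^{*}H^{\dim X - 1})_{\G_{n}}$ grows like $\d_{1}(f)^{n}$ up to subexponential factors; since $F_{n}^{*}H$ is nef and $\pi_{n}^{*}H$ is nef and big, a Siu-type bigness inequality converts this intersection-theoretic growth into a numerical class-level comparison on $\G_{n}$, which in turn gives the displayed height inequality via standard functoriality.

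Applying this with $y = \tilde{x}_{n}$ and inserting the two height identities yields
\[
h_{H}^{+}(f^{n}(x)) \leq C_{\epsilon}' (\d_{1}(f) + \epsilon)^{n}\, h_{H}^{+}(x) + C_{\epsilon}',
\]
so taking $n$-th roots gives $\overline{\alpha}_{f}(x) \leq \d_{1}(f) + \epsilon$, and letting $\epsilon \to 0$ finishes the proof. The main obstacle is the Siu-type step. For a surjective morphism $g \colon X \longrightarrow X$ one has $(g^{n})^{*} = (g^{*})^{n}$, and the inequality reduces to the spectral-radius characterization of $\d_{1}(g)$ applied to the linear map $g^{*}$ on $N^{1}(X)_{\R}$; but for a genuinely rational map $(f^{n})^{*}$ is only submultiplicative, and one is forced to work on the $n$-varying resolutions $\G_{n}$. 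Extracting a constant $C_{\epsilon}$ that is uniform in $n$ therefore requires the delicate effective estimates worked out in the three papers cited in the statement of the theorem.
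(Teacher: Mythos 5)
Your reduction to the projective case, the use of the graphs $\G_{n}$, and the reformulation of the problem as a height inequality on $\G_{n}$ are all correct, and your displayed ``core technical step'' is indeed what has to be established. But notice that it is essentially a restatement of the theorem, so all the content must live in the paragraph that purports to prove it, and that paragraph has a genuine gap. Siu's inequality applied on $\G_{n}$ with $\alpha=\pi_{n}^{*}H$ (nef and big) and $\beta=F_{n}^{*}H$ (nef) does produce $t_{n}$ of size roughly $(\dim X)\cdot(F_{n}^{*}H\cdot\pi_{n}^{*}H^{\dim X-1})/(H^{\dim X})\lesssim(\d_{1}(f)+\e)^{n}$ such that $t_{n}\pi_{n}^{*}H-F_{n}^{*}H$ is big. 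However, bigness (or pseudo-effectivity) of a divisor class $E$ on a projective variety only gives $h_{E}\geq -C$ away from a proper closed subset (the base locus of a suitable positive multiple), not at every $\QQ$-point, and even off that locus the constant $C$ depends on $E$ and on the ambient variety, hence on $n$. There is no ``standard functoriality'' that promotes a purely numerical positivity statement into a pointwise height lower bound with a constant uniform over the infinite family $\G_{n}$. So the step ``which in turn gives the displayed height inequality'' is where the argument breaks; you do flag the uniformity issue at the end, but you present it as an instance of the numerical (Siu-type) step being hard, whereas the numerical step is the easy part and the difficulty is entirely in converting numerical positivity into height inequalities with controllable error terms.

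The missing idea, and the one used in \cite{KS16b}, \cite{Ma20a}, \cite{JSXZ21}, is that one never needs uniformity across all $\G_{n}$. Fix $\e>0$ and one index $n_{0}$ large enough that $\deg_{H}(f^{n_{0}}):=\left(F_{n_{0}}^{*}H\cdot\pi_{n_{0}}^{*}H^{\dim X-1}\right)/(H^{\dim X})\leq(\d_{1}(f)+\e)^{n_{0}}$. Then carry out the geometric-to-height conversion only on the single graph $\G_{n_{0}}$, producing an estimate of the form $h_{H}^{+}(f^{n_{0}}(y))\leq c\,\deg_{H}(f^{n_{0}})\,h_{H}^{+}(y)+C_{n_{0}}$ valid for all $y\in X(\QQ)\setminus I_{f^{n_{0}}}$, where the additive constant $C_{n_{0}}$ may and does depend on $n_{0}$, but the multiplicative constant $c$ depends only on $(X,H)$ (it is roughly $\dim X$). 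Because $n_{0}$ is fixed, the base-locus and $O(1)$ difficulties that derail the ``uniform-in-$n$'' plan are harmless here. Now iterate $f^{n_{0}}$: the additive constant $C_{n_{0}}$ is absorbed into a convergent geometric series, giving $\overline{\alpha}_{f^{n_{0}}}(x)\leq c\,\deg_{H}(f^{n_{0}})$, hence $\overline{\alpha}_{f}(x)=\overline{\alpha}_{f^{n_{0}}}(x)^{1/n_{0}}\leq c^{1/n_{0}}(\d_{1}(f)+\e)$, and letting $n_{0}\to\infty$ then $\e\to 0$ concludes. This ``fix one level $n_{0}$, get a one-shot estimate with a universal multiplicative factor and a possibly large but fixed additive error, then iterate and take roots'' device is the essential mechanism, and it is what replaces the uniform-in-$n$ comparison that your sketch demands but cannot deliver.
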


This theorem says that the maximal arithmetic complexity that the orbits can take
is at most the first dynamical degree.
By this theorem, to prove \cref{a=dconj}, it is enough to show the inequality 
$ \underline{\alpha}_{f}(x) \geq \d_{1}(f)$ for $x$ having Zariski dense orbit.

\subsection{Understanding Kawaguchi-Silverman conjecture}\label{sec:understand-ksc}

Roughly speaking, the height of a point is the number of bits you need to describe the point
(cf.\ \cite[Example 1.5.5, Proposition 1.6.6, Lemma 1.6.7]{BG06}).
Over rational numbers, the number of bits are just the number of digits of coordinates and
Kawaguchi-Silverman conjecture can be interpreted as
\begin{align}
\lim_{n \to \infty} \left( \text{number of digits of coordinates of $f^{n}(x)$} \right)^{1/n} = \d_{1}(f).
\end{align}
For a dominant rational self-map $f \colon \P^{N}_{\Q} \dashrightarrow \P^{N}_{\Q}$,
the dynamical degree $\d_{1}(f)$ has the following equivalent definition:
\begin{align}
\d_{1}(f) = \lim_{n \to \infty} \left( \text{algebraic degree of $f^{n}$}\right)^{1/n}
\end{align}
where algebraic degree of $f^{n}$ is the degree of the coprime homogeneous polynomials defining $f^{n}$.
If $f$ is defined by $N+1$ coprime homogeneous polynomials $P_{0},\dots ,P_{N}$ with integer coefficient of degree $d \geq 2$,
then the $n$-times composite of them $P_{0}^{(n)}, \dots, P_{N}^{(n)}$  defines $f^{n}$ and have degree $d^{n}$.
But they may have common factors, and dividing out all the common factors, we get the coprime defining polynomials
and their degree is the algebraic degree of $f^{n}$.
Now consider a rational point $x \in \P^{N}(\Q)$.
We can write $x = (a_{0}:\cdots :a_{N})$ with coprime integers $a_{0},\dots, a_{N}$.
In this case, it is easy to see that the height $h_{\O_{\P^{N}}(1)}$ can be chosen so that the height of $x$ is $\log \max\{|a_{0}|, \dots, |a_{N}|\}$,
which is approximately equal to the number of digits up to constant multiple.
The homogeneous coordinates of $f^{n}(x)$ are computed by applying the defining homogeneous polynomials of $f$ repeatedly:
$f^{n}(x) = (P_{0}^{(n)}(a_{0},\dots,a_{N}): \cdots:P_{N}^{(n)}(a_{0},\dots,a_{N}))$.
After $n$-times iterations, we might expect the number of digits to be $d^{n}$-times larger, but they may have common divisors.
Indeed, there will always be a common factor coming from the common factor of the polynomials $P_{0}^{(n)}, \dots, P_{N}^{(n)}$.
Kawaguchi-Silverman conjecture essentially claims that this is the only common factor:
\begin{align}
&\lim_{n \to \infty} 
\left( \log \max\left\{ \left| \frac{P_{0}^{(n)}(a_{0},\dots,a_{N})}{G_{n}} \right|,\dots,   \left| \frac{P_{N}^{(n)}(a_{0},\dots,a_{N})}{G_{n}} \right|   \right\} \right)^{1/n}\\
& = \lim_{n \to \infty} \left( \deg  \frac{P_{0}^{(n)}}{ \gcd(P_{0}^{(n)}, \dots, P_{N}^{(n)}) } \right)^{1/n}
\end{align}
where $G_{n} = \gcd\left(P_{0}^{(n)}(a_{0},\dots,a_{N}),\dots ,P_{N}^{(n)}(a_{0},\dots,a_{N}) \right)$.

To end this section,  we present an example of numerical computation.
Let us consider the birational self-map
\begin{align}
f \colon \P^{2}_{\Q} \dashrightarrow \P^{2}_{\Q} , \left(x:y:z \right) \mapsto \left(x(y+z) : x(y-z) : (y-z)^{2} \right).
\end{align}
We can easily show that the algebraic degrees $\deg_{a}f^{n}$ of $f^{n}$ satisfies
\begin{align}
&\deg_{a}f^{0} =1, \deg_{a} f=2\\
&\deg_{a}f^{n+2} \leq \deg_{a} f^{n+1} + \deg_{a} f^{n} \quad n \geq 0.
\end{align}
Thus the degree sequence is bounded above by Fibonacci sequence and
the first dynamical degree of $f$ is at most the golden ratio $(1 + \sqrt{5})/2 \approx 1.618..$.
The following is the first few iterates of the point $(5:3:1)$.
Here we write $f^{n}(5:3:1) = (a(n) : b(n) : c(n))$ where $a(n), b(n), c(n)$ are coprime integers.

\begin{center}
\renewcommand{\arraystretch}{1.1}
\begin{tabular}{l|lll}
$n$ & $a(n)$ & $b(n)$ & $c(n)$ \\[1mm] \hline
$0$ & $5$ & $3$ & $1$ \\
$1$ & $10$ & $5$ & $2$ \\
$2$ & $70$ & $30$ & $9$ \\
$3$ & $130$ & $70$ & $21$ \\
$4$ & $1690$ & $910$ & $343$ \\
$5$ & $302510$ & $136890$ & $45927$ \\
$6$ & $682765070$ & $339718730$ & $102151449$ \\
$7$ & $268649620388110$ & $144436902263290$ & $50256645593707$ \\
\end{tabular}
\end{center}

For this orbit, the sequence defining arithmetic degree
\begin{align}
h_{\O_{\P^{2}}(1)}(f^{n}(5:3:1))^{1/n} = \left(\log \max\{  |a(n)|, |b(n)|, |c(n)|  \} \right)^{1/n} \quad (n \geq 1)
\end{align}
is 
\begin{align}
2.302.., 2.061.., 1.694.., 1.651.., 1.660.., 1.652.., 1.649.., 1.645.., 1.643.. \cdots.
\end{align}

\section{A list of known results}

There are many special cases where the equality $ \alpha_{f}(x) = \d_{1}(f)$ is known.
The following is a rough summary of these results. 
This list may not be exhaustive and we do not observe the historical order.
Additionally, we do not provide definitions for the terminologies used here.
Let us restate \cref{a=dconj}.

\begin{conj*}[=\cref{a=dconj}. Kawaguchi-Silverman $\alpha=\delta$ conjecture]
Let $X$ be a quasi-projective variety defined over $\QQ$.
Let $f \colon X \dashrightarrow X$ be a dominant rational map.
If $x \in X_{f}(\QQ)$ has Zariski dense orbit, then $ \alpha_{f}(x)$ exists and $ \alpha_{f}(x) = \d_{1}(f)$.
\end{conj*}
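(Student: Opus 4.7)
\medskip

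\noindent\textbf{Proof proposal.} By \cref{thm:fund-ineq} we already have $\overline{\alpha}_f(x) \leq \delta_1(f)$ for every $x \in X_f(\QQ)$, so the whole content of the conjecture is the reverse inequality $\underline{\alpha}_f(x) \geq \delta_1(f)$ for Zariski-dense orbits (together with convergence of the defining limit, which will come for free once both inequalities match). The plan is to build a canonical height associated with a nef eigenclass for $f^*$ and then prove that this canonical height is strictly positive at points with dense orbit.

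First I would look for a birational model $\pi\colon X' \dashrightarrow X$ on which the lifted rational map $f' \colon X' \dashrightarrow X'$ is algebraically stable, i.e.\ $(f'^{n})^{*} = (f'^{*})^{n}$ on $N^{1}(X')_{\R}$, and on which one has good control of indeterminacy (for instance, a model where the pull--back operators behave well with respect to a distinguished pseudo-norm on $N^{1}$). Already here there is a real difficulty: in dimensions $\geq 3$ algebraically stable models are not known to exist in general, so in practice one must work with weaker substitutes such as $s$-compatible models or use approximate eigenclasses obtained from dynamical filtrations.

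Assuming such a model is available, I would apply a Perron--Frobenius argument to the cone $\mathrm{Nef}(X')_{\R}$ to produce a nonzero nef class $D \in N^{1}(X')_{\R}$ with $f'^{*} D \equiv \delta_1(f) D$, and then form the candidate canonical height
\begin{align}
\hh_{D}(y) = \lim_{n\to\infty} \frac{h_{D}(f'^{n}(y))}{\delta_1(f)^{n}},
\end{align}
for $y \in X'_{f'}(\QQ)$. Telescoping and a functional equation of the form $h_{D} \circ f' = \delta_1(f)\, h_{D} + O(\sqrt{h_{H}})$ (or whatever error term the model affords) should give convergence and the identity $\hh_{D}\circ f' = \delta_1(f)\, \hh_{D}$. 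In particular, if $\hh_{D}(y_0) > 0$ for some $y_0$ in the orbit, then $h_{D}(f'^{n}(y_0))$ grows exactly like $\delta_1(f)^{n}$, and comparison with an ample height on a projective compactification yields $\underline{\alpha}_{f'}(y_0) \geq \delta_1(f)$, which one then transports back to $X$ via $\pi$ (using a semiconjugacy statement as in \cref{prop:semconj-by-gene-fin}).

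The main obstacle, and where I expect the proof to break, is positivity: showing $\hh_{D}(x) > 0$ for every point whose $f$-orbit is Zariski dense. A priori $\{\hh_{D} = 0\}$ could be a dense but measure-zero set, and controlling it requires bigness (or at least non-vanishing top self-intersection on some subvariety) of the eigenclass $D$, together with a structure theorem ruling out dense orbits inside the augmented base locus of $D$. This is exactly where the existing proofs of special cases (abelian varieties, surfaces, endomorphisms of Fano or rationally connected varieties, etc.) each rely on geometry that is not available in full generality, so any general attack would have to find a substitute for bigness --- perhaps via a careful study of the invariant foliation/fibration produced by the vanishing locus of $\hh_{D}$ and a reduction to lower-dimensional instances of the same conjecture.
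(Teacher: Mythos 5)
The statement you were asked to prove is \cref{a=dconj}, which is precisely the Kawaguchi--Silverman $\alpha=\delta$ conjecture: it is an open problem, not a theorem. The paper contains no proof of it; what follows the restatement is merely a catalogue of special cases in which the conjecture has been verified. So there is no ``paper's own proof'' to compare your attempt against, and any argument claiming to establish the conjecture in full generality would have to be wrong somewhere.

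That said, your write-up does not actually claim to be a proof; it is a strategy sketch, and you are candid about where it breaks down. As such it is a fair summary of the dominant approach surveyed in \cref{sec:canonical-height}: reduce to $\underline{\alpha}_f(x)\geq\delta_1(f)$ via \cref{thm:fund-ineq}, pass to an algebraically stable model, extract a nef eigenclass $D$ with $f^*D\equiv\delta_1(f)D$ by Perron--Frobenius, build a nef canonical height $\hh_D$, and try to show $\hh_D>0$ on dense orbits using some weak Northcott property (\cref{prop:weak-Northcott}). You correctly identify the two genuine obstructions: (i) existence of algebraically stable (or otherwise workable) models for rational maps in dimension $\geq 3$, and (ii) positivity of $\hh_D$ along dense orbits, which in the known cases is forced by bigness of $D$ or of $D_+ + D_-$, or by extra geometric structure (abelian, toric, Mori dream space, hyper-K\"ahler, rationally connected with int-amplified endomorphism, etc.), none of which is available in general. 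Be aware, though, that the paper records concrete evidence that a single nef eigenclass need not have any useful positivity: in the Jonsson--Reschke construction for birational surface maps it is still unknown whether the canonical height is identically zero, and parts (2) and (3) of the original \cref{KSC} are already false for rational maps (\cref{thm:inf-set-of-ad} and the transcendental-degree example), so the eigenvalue/eigendivisor picture is genuinely more delicate than for morphisms. In short: your outline is consistent with the literature, but it should be presented as a discussion of the obstruction, not as a proof.
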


This conjecture is proven in the following cases.

\begin{enumerate}
\item
$X$ is an arbitrary projective variety and $f$ is a polarized surjective self-morphism.
This case includes the case where $f$ is a surjective self-morphism on a curve, 
$\P^{n}_{\QQ}$, or more generally a projective variety with Picard number one.
(cf.\ \cref{thm:ksc-pol}.)

\item
$X$ is a projective surface and $f$ is a surjective self-morphism.
(cf.\ \cref{thm:ksc-surface,rmk:ksc-surf,thm:autos}.)

\item
$X$ is a semi-abelian variety and $f$ is a surjective self-morphism.
(cf.\ \cref{thm:ksc-monomial-maps,thm:ksc-semiabvar,rmk:ksc-abvar-all,thm:ksc-abvar}.)

\item
$X$ is a linear algebraic group and $f$ is a surjective group endomorphism.
(cf.\ \cref{thm:ksc-linalggrp}.)

\item
$X$ is a Mori dream space or (not necessarily $\Q$-factorial) variety of Fano type and $f$ is a surjective self-morphism.
This case includes the case where $X$ is a projective toric variety
and a smooth projective rationally connected variety admitting int-amplified endomorphism.
(cf.\ \cref{thm:sa-nef-cone,cor:mds}.)

\item
$X$ is a Hyper-K\"ahler variety and $f$ is a surjective self-morphism.
(cf.\ \cref{thm:autos}.)

\item
Automorphisms on $\A^{2}_{\QQ}$ and 
regular affine automorphisms on $\A^{n}_{\QQ}$.
(cf.\ \cref{thm:reg-affine-auto,rmk:ksc-A2,thm:endo-on-A2-JW}.)

\item
Self-morphisms on $\A^{2}_{\QQ}$ with some conditions on dynamical degrees.
(cf.\ \cref{thm:endo-on-A2-JW,rmk:JWX,thm:wang}.)

\item
There are many special cases proven by using minimal model theory and projective geometry.
These results are mostly on surjective self-morphisms on projective varieties.
Minimal model theory is particularly effective when the variety admits at least one polarized or
more generally int-amplified endomorphism.
See \cref{sec:mmp-proj-geom} for details.

\item
$X$ is a smooth projective variety, $f$ is a $1$-cohomologically hyperbolic rational map, 
and the $f$-orbit of $x$ is generic.
(cf.\ \cref{thm:cohhyp}.)

\end{enumerate}

\section{First properties}

Let us collect some of basic properties and observations 
that are frequently used in the study of Kawaguchi-Silverman conjecture.

\begin{prop}
Let $X$ be a quasi-projective variety defined over $\QQ$.
Let $f \colon X \dashrightarrow X$ be a dominant rational map.
Let $x \in X_{f}(\QQ)$.
For integers $m \geq 1$ and $l \geq 0$, we have
\begin{align}
&\underline{\alpha}_{f^{m}}(f^{l}(x)) = \underline{\alpha}_{f}(x)^{m}\\
&\overline{\alpha}_{f^{m}}(f^{l}(x)) = \overline{\alpha}_{f}(x)^{m}.
\end{align}
In particular, if one of $ \alpha_{f^{m}}(f^{l}(x))$ and $\alpha_{f}(x)$ exists,
then the other exists and $ \alpha_{f^{m}}(f^{l}(x)) = \alpha_{f}(x)^{m}$.
\end{prop}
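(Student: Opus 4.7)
The plan is to pass everything through the single scalar sequence $a_n := h_H^+(f^n(x))^{1/n}$ ($n \geq 1$) and reduce the claim to comparing the $\liminf/\limsup$ of $(a_k)_k$ with those of the arithmetic-progression subsequence $(a_{mn+l})_n$. Note that $a_n \geq 1$ always, and by \cref{thm:fund-ineq} the tail of $(a_n)$ lies in a bounded interval $[1,M]$. Expanding the definition gives
\begin{align*}
\underline{\alpha}_{f^m}(f^l(x)) = \liminf_{n \to \infty} h_H^+(f^{mn+l}(x))^{1/n} = \liminf_{n \to \infty} a_{mn+l}^{(mn+l)/n};
\end{align*}
since $(mn+l)/n = m + l/n$ and $a_{mn+l}^{l/n} \to 1$ (by boundedness of $a_{mn+l}$), this yields $\underline{\alpha}_{f^m}(f^l(x)) = (\liminf_n a_{mn+l})^m$, and analogously $\overline{\alpha}_{f^m}(f^l(x)) = (\limsup_n a_{mn+l})^m$. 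The proposition therefore reduces to showing $\liminf_n a_{mn+l} = \liminf_k a_k$ and $\limsup_n a_{mn+l} = \limsup_k a_k$.

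One inequality in each is automatic, since $(a_{mn+l})_n$ is a subsequence of $(a_k)_k$. For the reverse inequalities I would invoke the standard one-step bound from the Weil height machine: there exists $K \geq 1$ with $h_H^+(f(y)) \leq K \cdot h_H^+(y)$ for every $y$ in the domain of $f$, whence $h_H^+(f^{k+r}(x)) \leq K^r h_H^+(f^k(x))$ for all $k,r \geq 0$ (applicable since $x \in X_f(\QQ)$). Applied with $k = mj+l$, $r = N - mj - l$, and $j = \lfloor (N-l)/m \rfloor$ (so $0 \leq r < m$), this becomes $a_N^N \leq K^r a_{mj+l}^{mj+l}$, i.e.,
\begin{align*}
a_{mj+l} \geq a_N^{N/(mj+l)} \cdot K^{-r/(mj+l)}.
\end{align*}
As $N \to \infty$ the two exponents tend to $1$ and $0$ respectively; taking $N$ along a sequence that realizes $\limsup a_k$ forces $\limsup_n a_{mn+l} \geq \limsup_k a_k$. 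Symmetrically, choosing $j = \lceil (N-l)/m \rceil$ (so $mj+l \geq N$) gives the dual inequality $a_{mj+l} \leq K^{(mj+l-N)/(mj+l)} a_N^{N/(mj+l)}$, and taking $N$ along a sequence realizing $\liminf a_k$ yields $\liminf_n a_{mn+l} \leq \liminf_k a_k$.

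These two comparisons finish the proofs of the liminf and limsup equalities, and the ``in particular'' assertion is immediate because $\alpha_f(x)$ exists precisely when $\underline\alpha_f(x) = \overline\alpha_f(x)$. The only delicate step is the one-step bound $h_H^+ \circ f \leq K \cdot h_H^+$ for a rational (not morphism) $f$: it is standard, proved by compactifying $X$ via the chosen immersion $i \colon X \hookrightarrow P$, passing to the graph of the induced $\bar f$ to apply the Weil height machine for the projection morphism $F \colon \Gamma \to P$, and using that any Cartier divisor is bounded in height above by a multiple of an ample one. Everything else is elementary manipulation of $\liminf/\limsup$ of bounded sequences.
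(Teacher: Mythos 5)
Your proof is correct and takes essentially the same approach as the paper, whose proof simply refers to the argument of \cite[Corollary 3.4]{Ma20a} with the smooth-projective \cite[Lemma 3.3]{Ma20a} replaced by the quasi-projective \cite[Lemma 3.8]{JSXZ21}. That cited argument is exactly what you wrote: reduce everything to the one-step bound $h_H^{+}(f(y)) \leq K\,h_H^{+}(y)$ on $X \setminus I_{f}$ (the content of the cited lemma, which you correctly flag as the only non-formal step) and then compare $\liminf$ and $\limsup$ of the bounded sequence $(a_n)$ with those of its arithmetic-progression subsequence; your bookkeeping here is all correct.
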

\begin{proof}
Use the same argument as in \cite[Corollary 3.4]{Ma20a}.
The proof of \cite[Corollary 3.4]{Ma20a} uses \cite[Lemma 3.3]{Ma20a}, which assumes $X$
is smooth projective. Use \cite[Lemma 3.8]{JSXZ21} instead, then the same calculation works.
\end{proof}

The dynamical degree is also compatible with taking iterates: $\d_{1}(f^{m}) = \d_{1}(f)^{m}$.
Also the orbit $O_{f}(x)$ is Zariski dense if and only if
$O_{f^{m}}(f^{l}(x))$ is Zariski dense, so we can always replace $f$ with its iterates
to prove \cref{a=dconj}.

It is easy to see from the definition that the arithmetic degree is compatible with restriction to closed subvarieties.
That is, if $f \colon X \dashrightarrow X$ is a dominant rational map and 
$Y \subset X$ is a closed subvariety such that $Y \not\subset I_{f}$, $f(Y \setminus I_{f}) \subset Y$, and
$f|_{Y} \colon Y \dashrightarrow Y$ is dominant,
then we can form the following diagram:
 \[
\xymatrix{
Y \ar@{-->}[r]^{f|_{Y}} \ar@{}[d]|{\bigcap} & Y \ar@{}[d]|{\bigcap} \\
X \ar@{-->}[r]_{f} & X.
}
\]
If $x \in X_{f}(\QQ) \cap Y$, then $ \underline{\alpha}_{f}(x) = \underline{\alpha}_{f|_{Y}}(x)$
and $ \overline{\alpha}_{f}(x) = \overline{\alpha}_{f|_{Y}}(x)$.

When two maps are semi-conjugate by a dominant rational map,
we have inequalities between arithmetic degrees.

\begin{prop}\label{prop:ad-and-semiconj}
Let $X, Y$ be quasi-projective varieties defined over $\QQ$.
Consider the following commutative diagram of dominant rational maps:
 \[
\xymatrix{
X \ar@{-->}[r]^{f} \ar@{-->}[d]_{\pi} & X \ar@{-->}[d]^{\pi}\\
Y \ar@{-->}[r]_{g} & Y
}
\]
Let $x \in X_{f}(\QQ)$ be such that 
\begin{itemize}
\item the orbit $O_{f}(x)$ avoids the indeterminacy locus $I_{\pi}$ of $\pi$: $O_{f}(x) \cap I_{\pi} =  \emptyset$;
\item $\pi(x) \in Y_{g}(\QQ)$.
\end{itemize}
Then we have 
\begin{align}
&\underline{\alpha}_{g}(\pi(x)) \leq \underline{\alpha}_{f}(x)\\
& \overline{\alpha}_{g}(\pi(x)) \leq \overline{\alpha}_{f}(x).
\end{align}
\end{prop}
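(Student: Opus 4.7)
The strategy is to reduce the inequality of arithmetic degrees to a single Weil-height inequality comparing $h_{H'} \circ \pi$ with $h_{H}$ on the domain of $\pi$, then iterate along the $f$-orbit using the commutativity of the diagram.

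First I would fix projective compactifications $X \hookrightarrow \bar X$ and $Y \hookrightarrow \bar Y$, extend $f, g, \pi$ to rational maps between them, and pick ample divisors $H$ on $\bar X$ and $H'$ on $\bar Y$ with associated height functions $h_{H}$ and $h_{H'}$. The central input is the comparison
\begin{align}
h_{H'}(\pi(y)) \leq C h_{H}^{+}(y) + D \qquad \text{for all } y \in (\bar X \setminus I_{\pi})(\QQ)
\end{align}
for some constants $C \geq 1$ and $D \geq 0$. This is a standard consequence of the Weil height machine: resolve the indeterminacy by a birational morphism $\rho \colon \tilde X \to \bar X$ so that $\tilde \pi = \pi \circ \rho \colon \tilde X \to \bar Y$ is a morphism; since $H$ is ample on $\bar X$, a sufficiently large multiple of $\rho^{*}H$ dominates $\tilde \pi^{*}H'$ modulo $\rho$-exceptional components, and $\rho$ is an isomorphism away from $I_{\pi}$. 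This is essentially the estimate proved in \cite[Lemma 3.8]{JSXZ21}.

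Next I would substitute $y = f^{n}(x)$. The hypothesis $O_{f}(x) \cap I_{\pi} = \emptyset$ guarantees $f^{n}(x) \notin I_{\pi}$ for every $n \geq 0$, and commutativity of the square together with $\pi(x) \in Y_{g}(\QQ)$ gives $\pi(f^{n}(x)) = g^{n}(\pi(x))$. Therefore
\begin{align}
h_{H'}(g^{n}(\pi(x))) \leq C h_{H}^{+}(f^{n}(x)) + D \qquad (n \geq 0).
\end{align}
Passing to $h_{H'}^{+}$ on the left and absorbing $D$ into the multiplicative constant via $h_{H}^{+} \geq 1$, I obtain $h_{H'}^{+}(g^{n}(\pi(x))) \leq (C+D)\, h_{H}^{+}(f^{n}(x))$.

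Finally, taking $n$-th roots yields
\begin{align}
h_{H'}^{+}(g^{n}(\pi(x)))^{1/n} \leq (C+D)^{1/n} h_{H}^{+}(f^{n}(x))^{1/n},
\end{align}
and since $(C+D)^{1/n} \to 1$, applying $\limsup$ and $\liminf$ separately delivers both inequalities. The only genuinely non-trivial ingredient is the height comparison in the first step; everything that follows is bookkeeping. The subtle point I would be most careful about is that the comparison really must be valid along the entire forward orbit, and the orbit-avoidance hypothesis $O_{f}(x) \cap I_{\pi} = \emptyset$ is precisely what makes this automatic once the inequality is established on $(\bar X \setminus I_{\pi})(\QQ)$.
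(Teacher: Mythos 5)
Your proof is correct and takes essentially the same route as the paper: the paper's entire proof is the one-liner ``This follows from the definition of arithmetic degree and [Lemma 3.8, JSXZ21],'' and your write-up simply spells out the height comparison $h_{H'}(\pi(y)) \leq C\, h_{H}^{+}(y) + D$ (which is exactly the content of that cited lemma) and then iterates along the orbit. One small point you state but do not justify is the pointwise identity $\pi(f^{n}(x)) = g^{n}(\pi(x))$; a commutative square of rational maps only gives equality of the two compositions as rational maps, so a short argument (e.g.\ by induction, using separatedness and the hypotheses $O_{f}(x)\cap I_{\pi}=\emptyset$ and $\pi(x)\in Y_{g}(\QQ)$ to see both sides are defined and agree on a dense open set containing the relevant point) is implicitly needed — but the paper glosses over the same point, so this is not a discrepancy between you and the source.
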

\begin{proof}
This follows from the definition of arithmetic degree and \cite[Lemma 3.8]{JSXZ21}.
\end{proof}

A typical application of this proposition is the following.
Suppose $f,g$, and $\pi$ are morphisms and $\d_{1}(f) = \d_{1}(g)$,
then for any point $x \in X(\QQ)$, we have $ \underline{\alpha}_{f}(x) \geq \underline{\alpha}_{g}(\pi(x))$.
If $O_{f}(x)$ is Zariski dense, then so is $O_{g}(\pi(x))$ and hence \cref{a=dconj} for $(X,f)$ follows from 
that of $(Y,g)$.
From this observation, we have for example:
\begin{lem}
Let $X, Y$ be projective varieties over $\QQ$ and $f \colon X \longrightarrow X$,
$g \colon Y \longrightarrow Y$ be surjective self-morphisms.
Then \cref{a=dconj} for $f \times g \colon X \times Y \longrightarrow X \times Y$
follows from that of $f$ and $g$.
\end{lem}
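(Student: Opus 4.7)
The plan is to combine the semi-conjugacy inequality (Proposition \ref{prop:ad-and-semiconj}) with the fundamental upper bound (Theorem \ref{thm:fund-ineq}), sandwiching the arithmetic degree between two copies of $\d_1(f \times g)$. Throughout, I will use the two projection morphisms
\[
\xymatrix{
X \ar[d]_{} & X \times Y \ar[l]_{\pi_X} \ar[r]^{\pi_Y} \ar[d]^{f \times g} & Y \ar[d]^{} \\
X & X \times Y \ar[l]^{\pi_X} \ar[r]_{\pi_Y} & Y
}
\]
which are morphisms semi-conjugating $f \times g$ to $f$ and to $g$ respectively.

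First I would establish the dynamical degree identity $\d_1(f \times g) = \max\{\d_1(f), \d_1(g)\}$. For surjective morphisms this can be obtained concretely: choose ample divisors $A$ on $X$ and $B$ on $Y$, set $H = \pi_X^* A + \pi_Y^* B$ (ample on $X \times Y$), and expand $(f \times g)^{*n} H \cdot H^{d-1}$ with $d = \dim X + \dim Y$. By the projection formula and Künneth, the only surviving terms are a positive multiple of $((f^n)^* A \cdot A^{\dim X - 1})_X \cdot (B^{\dim Y})_Y$ and a positive multiple of $(A^{\dim X})_X \cdot ((g^n)^* B \cdot B^{\dim Y - 1})_Y$, whose $n$-th roots tend to $\d_1(f)$ and $\d_1(g)$. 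Taking the limit of the maximum gives the claim.

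Next, given a point $(x,y) \in (X \times Y)(\QQ)$ whose $(f \times g)$-orbit is Zariski dense in $X \times Y$, I would observe that $\pi_X$ and $\pi_Y$ are surjective morphisms, so $O_f(x) = \pi_X(O_{f \times g}(x,y))$ has Zariski dense image in $X$ and similarly $O_g(y)$ is dense in $Y$. Since $\pi_X, \pi_Y$ are morphisms, there is no indeterminacy issue and Proposition \ref{prop:ad-and-semiconj} applies to both, giving
\[
\underline{\alpha}_{f \times g}(x,y) \geq \max\{\underline{\alpha}_f(x), \underline{\alpha}_g(y)\}.
\]
By the hypothesis that \cref{a=dconj} holds for $f$ and for $g$, the right-hand side equals $\max\{\d_1(f), \d_1(g)\} = \d_1(f \times g)$. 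Combining with Theorem \ref{thm:fund-ineq}, which yields $\overline{\alpha}_{f \times g}(x,y) \leq \d_1(f \times g)$, we obtain the sandwich
\[
\d_1(f \times g) \leq \underline{\alpha}_{f \times g}(x,y) \leq \overline{\alpha}_{f \times g}(x,y) \leq \d_1(f \times g),
\]
so the limit defining $\alpha_{f \times g}(x,y)$ exists and equals $\d_1(f \times g)$, as required.

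The only nontrivial ingredient is the dynamical degree computation for a product of surjective morphisms; this is standard but worth stating carefully. Everything else is a direct application of the two tools (semi-conjugacy and the fundamental inequality) already recorded in the text, together with the elementary observation that Zariski density of an orbit passes to the factors under the coordinate projections.
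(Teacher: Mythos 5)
Your argument is correct and follows essentially the same route as the paper's one-line proof, which invokes the identities $\alpha_{f\times g}(x,y) = \max\{\alpha_f(x),\alpha_g(y)\}$ and $\d_1(f\times g) = \max\{\d_1(f),\d_1(g)\}$. The only (welcome) variation is that you avoid having to prove the upper bound $\overline{\alpha}_{f\times g}(x,y) \leq \max\{\overline{\alpha}_f(x),\overline{\alpha}_g(y)\}$ by substituting \cref{thm:fund-ineq}, and you spell out the dynamical-degree computation via explicit intersection numbers rather than taking it for granted.
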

\begin{proof}
This follows from $ \alpha_{f \times g}(x,y) = \max\{ \alpha_{f}(x), \alpha_{g}(y)\}$
and $\d_{1}(f \times g) = \max\{ \d_{1}(f), \d_{1}(g)\}$.
\end{proof}
In general, there are self-morphisms on a product variety that are not of the form $f \times g$,
but there are several cases where this happens automatically (at least after some iterates).
See \cite{Sa20} for some results to this direction.

When two maps are conjugate by a generically finite morphism
and the self-maps are quasi-finite, we have the invariance of arithmetic degrees for dense orbits.

\begin{prop}[{\cite[Proposition 3.13]{JSXZ21}}]\label{prop:semconj-by-gene-fin}
Let $X, Y$ be quasi-projective varieties over $\QQ$ and consider the following commutative diagram
 \[
\xymatrix{
X \ar[r]^{f} \ar[d]_{\pi} & X \ar[d]^{\pi}\\
Y \ar[r]_{g} & Y
}
\]
where $f,g$ are quasi-finite dominant morphisms and $\pi$ is a generically finite dominant morphism.
Then for $x \in X(\QQ)$, the orbit $O_{f}(x)$ is Zariski dense in $X$ if and only if $O_{g}(\pi(x))$ is 
Zariski dense in $Y$.
Moreover, if $O_{f}(x)$ is Zariski dense in $X$, then 
$ \underline{\alpha}_{f}(x) = \underline{\alpha}_{g}(\pi(x))$ and $ \overline{\alpha}_{f}(x) = \overline{\alpha}_{g}(\pi(x))$.
In particular, \cref{a=dconj} for $f$ is equivalent to that of $g$.
\end{prop}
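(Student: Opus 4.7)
The plan is to first establish the equivalence of Zariski density of orbits, then derive the arithmetic degree equalities by combining \cref{prop:ad-and-semiconj} with a reverse height inequality that exploits the generic finiteness of $\pi$ together with the quasi-finiteness of $f$ and $g$.

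\textbf{Density equivalence.} If $O_{f}(x)$ is Zariski dense in $X$, then $\pi(O_{f}(x)) = O_{g}(\pi(x))$ is Zariski dense in $Y$, since $\pi$ is a dominant morphism and dominant morphisms carry dense subsets to dense subsets. Conversely, suppose $Z := \overline{O_{f}(x)}$ is a proper closed subset of $X$. Because $\pi$ is generically finite and dominant we have $\dim X = \dim Y$, so every irreducible component of $Z$ has dimension strictly less than $\dim Y$; consequently $\overline{\pi(Z)}$ is a proper closed subset of $Y$ containing $O_{g}(\pi(x))$, contradicting the density hypothesis.

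\textbf{One direction of the arithmetic degree equality.} Applying \cref{prop:ad-and-semiconj} (whose indeterminacy hypothesis is vacuous, since $\pi$ is a morphism) immediately gives
\[
\underline{\alpha}_{g}(\pi(x)) \leq \underline{\alpha}_{f}(x), \qquad \overline{\alpha}_{g}(\pi(x)) \leq \overline{\alpha}_{f}(x).
\]
For the reverse inequalities, I would fix projective compactifications $\bar{X} \supset X$ and $\bar{Y} \supset Y$, extend $\pi$ to a morphism $\bar{\pi} \colon \bar{X} \to \bar{Y}$ after possibly replacing $\bar X$ by a blow-up, and fix ample divisors $H_{X}$ on $\bar{X}$ and $H_{Y}$ on $\bar{Y}$. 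The target estimate is
\[
h_{H_{X}}(f^{n}(x)) \leq C \cdot h_{H_{Y}}^{+}(g^{n}(\pi(x))) + o(n)
\]
with $C$ independent of $n$, since taking $n$-th roots and limits would then yield $\overline{\alpha}_{f}(x) \leq \overline{\alpha}_{g}(\pi(x))$ and likewise for $\underline{\alpha}$. Because $\bar{\pi}$ is generically finite, $\bar{\pi}^{*} H_{Y}$ is big, and we may write $m\, \bar{\pi}^{*} H_{Y} \equiv H_{X} + E$ for some positive integer $m$ and effective divisor $E$. The height machine then gives $h_{H_{X}}(a) \leq m\, h_{H_{Y}}(\pi(a)) + O(1)$ whenever $a \notin \Supp(E)$.

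\textbf{The main obstacle.} The hard part is controlling iterates $f^{n}(x)$ that land in $\Supp(E)$: on the exceptional locus the naive inequality above degrades and must be replaced by a bound that still gives the correct asymptotic. Here the quasi-finiteness of $f$ (and $g$) is crucial. Without it, $f$ could stabilize a proper subvariety containing all exceptional iterates, destroying the uniform estimate required for the $\underline{\alpha}$ equality. The strategy is to show that $h_{E}(f^{n}(x))^{1/n} \to 1$ along any Zariski-dense orbit of a quasi-finite morphism, so that the contribution of $E$ is absorbed in the $n$-th-root limit. In practice, this bookkeeping is achieved in \cite{JSXZ21} through their Lemma 3.8 (already invoked earlier in this survey for the independence of the arithmetic degree on the choice of immersion), which directly compares Weil heights across generically finite morphisms and, when iterated along the orbit, delivers the required uniform estimate for both upper and lower arithmetic degrees.
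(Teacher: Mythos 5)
The survey itself offers no argument here: it simply cites \cite[Proposition 3.13]{JSXZ21}, so there is no in-paper proof to measure you against and your attempt must stand on its own.

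Your density equivalence is correct and cleanly argued: dominance carries dense sets to dense sets in one direction, and equality of dimensions (from generic finiteness) gives the converse; note this part uses neither the quasi-finiteness of $f,g$ nor any height theory. The easy inequality $\overline{\alpha}_{g}(\pi(x)) \leq \overline{\alpha}_{f}(x)$ and its $\underline{\alpha}$ analogue via \cref{prop:ad-and-semiconj} is also fine.

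The reverse inequality is where your proposal genuinely breaks down. You write $m\,\bar\pi^{*}H_{Y} \equiv H_{X}+E$ with $E$ effective, obtain $h_{H_{X}} \leq m\, h_{H_{Y}}\circ\bar\pi + O(1)$ off $\Supp E$, and then must handle iterates $f^{n}(x) \in \Supp E$. The ``strategy'' you propose — that $h_{E}(f^{n}(x))^{1/n} \to 1$ — is not well-posed: for an effective divisor $E$, the height $h_{E}$ is bounded below only away from $\Supp E$, and on $\Supp E$ it tends to $-\infty$, so $h_{E}(f^{n}(x))^{1/n}$ need not even be a real number at precisely the iterates that cause trouble. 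What the argument actually requires is a lower bound of the form $h_{E}(f^{n}(x)) \geq -o(c^{n})$ for every $c>1$ along the dense orbit, i.e.\ a quantitative statement that $f^{n}(x)$ does not approach $\Supp E$ too fast too often. That is a genuinely dynamical (Diophantine-approximation flavored) assertion, and it is the entire content of the reverse direction; it does not follow from the static geometric setup you describe. Invoking \cite[Lemma 3.8]{JSXZ21} — which compares heights for a single generically finite morphism, not along an iterated orbit entering a bad locus — does not supply this, and since the statement to be proved is itself Proposition 3.13 of the same paper, ``the required uniform estimate is delivered by \cite{JSXZ21}'' is essentially circular. Finally, your sketch never actually uses the quasi-finiteness of $f$ and $g$ except by assertion; a complete proof would need to make explicit where that hypothesis enters (for instance, it is what lets one extend $\pi$ to a finite morphism of suitable models via Zariski's Main Theorem after the problem is reduced appropriately), and as written you cannot tell whether your strategy would also ``prove'' false statements for non-quasi-finite $f$.
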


We conclude this section by remarking that Kawaguchi-Silverman conjecture 
is vacuous, i.e.\ there is no Zariski dense orbits, when the Kodaira dimension of the variety is positive.

\begin{prop}
Let $X$ be a quasi-projective variety over $\QQ$ and 
$f \colon X \dashrightarrow X$ be a dominant rational map.
If the Kodaira dimension of $X$ is positive, then $f$ does not have any Zariski dense orbits.
Here the Kodaira dimension of $X$ is by definition the Kodaira dimension of the smooth projective model of $X$.
In particular, \cref{a=dconj} is vacuous in this case.
\end{prop}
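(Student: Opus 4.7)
The plan is to reduce to the smooth projective case and then use the action of $\tilde f$ on pluricanonical forms to produce a non-constant $\tilde f$-invariant rational function on $X$, which forces any orbit to lie in a proper closed subset.

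First, I pass to a smooth projective birational model $X'$ of $X$: a Zariski-dense $f$-orbit in $X$ restricts to a dense orbit on the common open subset of $X$ and $X'$, hence yields a dense orbit of the induced dominant rational map $\tilde f$ on $X'$, and the Kodaira dimension is unchanged by the definition in the statement. So we may assume $X$ is smooth projective. Since $X$ is smooth, $I_{\tilde f}$ has codimension $\geq 2$; and since pluricanonical forms extend across codimension-$2$ loci on smooth varieties and $\tilde f$ is generically \'etale in characteristic zero, pullback yields an injective --- hence, by finite dimensionality, bijective --- linear map $\tilde f^{*} \colon V_m := H^0(X, mK_X) \to V_m$ for every $m \geq 1$. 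These assemble into a graded $\QQ$-algebra automorphism of the canonical ring $R(X) = \bigoplus_{m \geq 0} V_m$.

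The crucial step is to find a positive iterate $\tilde f^{\,N}$ acting on each $V_m$ by a scalar. One clean route uses BCHM: $R(X)$ is finitely generated, so $X_{\mathrm{can}} := \Proj R(X)$ is the canonical model, a projective variety of general type with ample canonical class. The graded algebra automorphism $\tilde f^{*}$ of $R(X)$ descends to a biregular automorphism of $X_{\mathrm{can}}$, and $\Aut(X_{\mathrm{can}})$ is finite by Matsumura's theorem, of some order $N$. Then $\tilde f^{*N}$ is trivial on $X_{\mathrm{can}}$, so there exists $\lambda \in \QQ^{\times}$ with $\tilde f^{*N}|_{V_m} = \lambda^m \cdot \mathrm{id}$ for every $m$. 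Pick $m$ with $\dim V_m \geq 2$ (possible since $\kappa(X) > 0$) and two linearly independent sections $s_0, s_1 \in V_m$; then $u := s_0/s_1 \in K(X)$ is non-constant and $\tilde f^{\,N}$-invariant, since $u \circ \tilde f^{\,N} = (\lambda^m s_0)/(\lambda^m s_1) = u$. Writing $O_{\tilde f}(x) = \bigcup_{j=0}^{N-1} O_{\tilde f^{\,N}}(\tilde f^{\,j}(x))$ as a union of $N$ orbits of $\tilde f^{\,N}$, each contained in a fiber of $u$, we see that $O_{\tilde f}(x)$ lies in a finite union of proper closed subvarieties of $X$, contradicting Zariski density.

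The main obstacle is invoking BCHM and Matsumura to force $\tilde f^{*N}$ to act by scalars on each $V_m$. A more self-contained alternative is to work with the Iitaka fibration $\phi_m \colon X \dashrightarrow Y$ and show that the induced map $g \colon Y \dashrightarrow Y$ on the image has finite order: $g$ preserves the pluricanonical polarization, and the canonical bundle formula (Fujino--Mori, Kawamata, Ambro) together with positivity of the moduli part exhibits $(Y, \Delta)$ as a pair of log-general type for a suitable boundary $\Delta$, whose group of birational self-maps preserving $\Delta$ is finite. Either way, the proof reduces to the rigidity of automorphism groups of varieties of (log-)general type.
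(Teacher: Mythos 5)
The main step in your primary argument is flawed. When $0 < \kappa(X) < \dim X$ (which is precisely the interesting range, since $\kappa > 0$ is the hypothesis), the Proj of the canonical ring $X_{\mathrm{can}}$ is \emph{not} a variety of general type with ample canonical class: it is the base of the Iitaka fibration, and the ample sheaf $\O_{X_{\mathrm{can}}}(1)$ differs from $\O(K_{X_{\mathrm{can}}})$ by a boundary/moduli contribution. A concrete counterexample: for a Dolgachev surface $X$ (minimal elliptic surface over $\P^1$ with two multiple fibers of coprime multiplicities $p,q \geq 2$ and $1/p + 1/q < 1$) one has $\kappa(X) = 1$ and $X_{\mathrm{can}} \cong \P^1$, whose automorphism group is $\mathrm{PGL}_2$, infinite. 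So Matsumura's theorem does not apply and you cannot conclude that $\tilde f^{*N}$ acts trivially on $X_{\mathrm{can}}$. The finiteness you need is the \emph{finiteness of the pluricanonical representation} (Ueno, Deligne, Nakamura--Ueno, and more recent generalizations), which is a genuinely deeper statement than finiteness of $\Aut$ of the base; it uses the extra orbifold/boundary structure on the Iitaka base, as in the ``self-contained alternative'' you sketch at the end via the Kawamata/Fujino--Mori canonical bundle formula. That alternative is the right idea and is essentially what the paper invokes by citation; your primary BCHM-plus-Matsumura route is not a valid shortcut.

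There is a second, more minor, gap in your reduction to a smooth projective model. A Zariski-dense $f$-orbit on $X$ does \emph{not} automatically yield a well-defined $\tilde f$-orbit on $X'$: the iterates $f^n(x)$ can enter the indeterminacy locus of the birational map $X \dashrightarrow X'$, and the survey itself flags exactly this issue in the remark ``Smooth vs singular''. Fortunately, for this proposition the reduction can be carried out more cheaply: produce the non-constant $\tilde f^N$-invariant rational function $u$ on the smooth projective model $X'$, and then transport $u$ back to a rational function $v$ on $X$ via the birational identification (rational functions are birational invariants, so $v \circ f^N = v$ as rational functions). Then $O_{f^N}(y)$ lies in the closure of a level set of $v$, and $O_f(x) \subseteq \bigcup_{j=0}^{N-1} \overline{O_{f^N}(f^j(x))}$ is contained in a finite union of proper closed subsets, so it is not dense. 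This avoids pushing the orbit through the birational map entirely. Aside from these two points, the core mechanism --- some iterate of $\tilde f^*$ acts by a scalar on $V_m = H^0(X', mK_{X'})$, so a ratio of two independent sections is a non-constant invariant rational function --- is the same as the argument underlying the reference the paper cites.
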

\begin{proof}
This is an application of a variant of the finiteness of pluricanonical representation.
See \cite[Theorem A]{NZ09}, \cite[Remark 1.2]{MSS18a}.
\end{proof}

As we have this proposition, the study of Kawaguchi-Silverman conjecture focuses on
varieties of non-positive Kodaira dimension.

\section{Canonical heights}\label{sec:canonical-height}

A typical strategy to prove the Kawaguchi-Silverman conjecture is to construct a height function that is compatible with the self-map.
Such a function is often referred to as a canonical height function. 
This function is a dynamical analogue of the N\'eron-Tate height function on abelian varieties.
Ever since Call and Silverman established the polarized endomorphism case in \cite{CS93}, 
many variants have appeared, reflecting the structure of the dynamical system under consideration.

Now, let us outline a typical argument used to prove the Kawaguchi-Silverman conjecture using canonical height functions.

Let $f \colon X \dashrightarrow X$ be a dominant rational self-map
and $h_{H}$ be a height function associated with an ample divisor $H$ on $X$.
Let $K$ be a number field over which $X$ and $f$ are both defined.
Suppose we have constructed a function $\hh \colon X_{f}(\QQ) \longrightarrow \R$
with the following properties:
\begin{align}
&\hh \leq C_{1} h_{H} + C_{2} \quad \text{for some constants $C_{1}, C_{2} > 0$;}\\
&\hh \circ f = \d_{1}(f) \hh ;\\
(*) \quad & \txt{The set $ \left\{ x \in X(L) \ \middle|\ \hh(x) \leq 0 \right\}$ is not Zariski dense in $X$ \\for any field extension 
$K \subset L \subset \QQ$ with $[L : K] < \infty$.}
\end{align}
Note that if $\hh$ is itself equal to, for example, an ample height function, then the first and the third conditions are
automatically satisfied.
Now,  for $x \in X_{f}(\QQ)$ we have
\begin{align}
C_{1}h_{H}(f^{n}(x)) + C_{2} \geq \hh(f^{n}(x)) = \d_{1}(f)^{n} \hh(x).
\end{align}
If $\hh(x) \leq 0$, then $\hh(f^{n}(x)) = \d_{1}(f)^{n}\hh(x) \leq 0$ as well.
If the point $x$ is defined over $L$, where $L$ is a finite extension of $K$,
then all $x, f(x), f^{2}(x), \dots$ are defined over $L$.
Thus if the orbit $O_{f}(x)$ is Zariski dense, we must have $\hh(x) >0$.
Thus we get 
\begin{align}
&\underline{\alpha}_{f}(x) =  \liminf_{n \to \infty} h_{H}^{+}(f^{n}(x))^{1/n} \geq \liminf_{n \to \infty} h_{H}(f^{n}(x))^{1/n} \\
&= \liminf_{n \to \infty} (C_{1} h_{H}(f^{n}(x)) + C_{2})^{1/n}
\geq \liminf_{n \to \infty} (\d_{1}(f)^{n}\hh(x))^{1/n} = \d_{1}(f).
\end{align}
Since we know $ \overline{\alpha}_{f}(x) \leq \d_{1}(f)$ (\cref{thm:fund-ineq}),
we get $ \alpha_{f}(x)$ exists and is equal to $\d_{1}(f)$.

Let us add some remark on the condition $(*)$.
It is a weakened version of Northcott finiteness property for height functions associated with ample divisors
(see, for example, \cite[Theorem 2.4.9]{BG06} for the Northcott property of ample height functions).
For $(*)$, we do not need ampleness and we have the following.
\begin{prop}[cf.\ {\cite[Proposition 3.5]{Ma20b},\cite[Proposition 5.1]{LM21}}]\label{prop:weak-Northcott}
Let $X$ be a geometrically irreducible normal projective variety over a number field  $K$.
Let $D$ be an $\R$-Cartier divisor on $X_{ \overline{K}}$ and $h_{D}$ be a height function associated with $D$.
If one of the following conditions is satisfied, then  the set
\begin{align}
\left\{ x \in X(L) \ \middle|\ K \subset L \subset \overline{K}, [L:K] \leq d, h_{D}(x) \leq B \right\}
\end{align}
is not Zariski dense in $X_{ \overline{K}}$ for all $B, d \geq 1$.
\begin{enumerate}
\item
$D$ is Cartier and $\k(X, D) > 0$, i.e.\ $\dim H^{0}(X, \O_{X}(mD)) \geq 2$ for some $m \in \Z_{>0}$.

\item
$X_{ \overline{K}}$ is $\Q$-factorial and $D$ is $\R$-linearly equivalent to two effective $\R$-divisors, i.e.\ 
$\exists D', D''$ such that $D', D''$ are effective $\R$-divisors on $X_{ \overline{K}}$,
$D' \neq D''$, and $D \sim_{\R} D' \sim_{\R} D''$.

\end{enumerate}
\end{prop}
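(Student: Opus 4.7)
The plan is to treat both cases by the same strategy: under each hypothesis, produce a non-constant rational map from $X$ to a positive-dimensional projective variety on which $h_D$ controls (up to effective correction terms) the pullback of an ample height, and invoke standard Northcott finiteness on projective space at bounded degree.

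For (1), since $\dim H^{0}(X, \O_{X}(mD)) \geq 2$ for some $m \geq 1$, a basis of sections $s_{0}, \dots, s_{N}$ with $N \geq 1$ yields a rational map $\phi \colon X \dashrightarrow \P^{N}$ with positive-dimensional image. Letting $E$ be the fixed part of $|mD|$, we have $mD \sim \phi^{*} \O_{\P^{N}}(1) + E$ with $E$ effective, so functoriality of heights gives
\begin{align*}
m\, h_{D}(x) = h_{\P^{N}}(\phi(x)) + h_{E}(x) + O(1).
\end{align*}
Since $h_{E} \geq -O(1)$ off $\Supp(E)$, any $x \in X(L) \setminus (\Supp(E) \cup I_{\phi})$ with $h_{D}(x) \leq B$ and $[L:K] \leq d$ satisfies $h_{\P^{N}}(\phi(x)) \leq mB + O(1)$ with $[L:K] \leq d$; Northcott on $\P^{N}$ yields only finitely many such $\phi(x)$, so these $x$ lie in the proper closed subset $\phi^{-1}(\text{finite}) \cup \Supp(E) \cup I_{\phi}$.

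For (2), I would reduce to (1). Replace $D'$, $D''$ by $E_{1} := D' - \min(D',D'')$ and $E_{2} := D'' - \min(D',D'')$, obtaining nonzero effective $\R$-divisors with disjoint supports and $E_{1} \sim_{\R} E_{2}$ (both nonzero because an effective $\R$-divisor numerically zero on a projective variety vanishes, by intersecting with a power of an ample class). By $\Q$-factoriality of $X_{\overline K}$, $\mathrm{Prin}(X)_{\R} = \mathrm{Prin}(X)_{\Q} \otimes_{\Q} \R$, and the subspace of $\R$-principal divisors supported on $S := \Supp(E_{1}) \cup \Supp(E_{2})$ is $\Q$-rational as an intersection of two $\Q$-rational subspaces of $\Div(X)_{\R}$; hence $\Q$-points are dense in it. Approximating $E_{1} - E_{2}$ by a nonzero $F \in \mathrm{Prin}(X)_{\Q}$ supported on $S$, the positive and negative parts $F_{+}, F_{-}$ are — for close enough approximation — nonzero effective $\Q$-divisors supported on $\Supp(E_{1})$ and $\Supp(E_{2})$ respectively, with $F_{+} \sim_{\Q} F_{-}$ and $F_{+} \neq F_{-}$. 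Clearing denominators, $nF_{+}$ is effective Cartier with $\kappa(X, nF_{+}) \geq 1$ for some $n \geq 1$, so case (1) applies.

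To finish, compare heights: since $F_{+}$ and $E_{1}$ have the same support, $cE_{1} - F_{+}$ is effective for $c > 0$ large enough, giving $h_{F_{+}} \leq c\, h_{E_{1}} + O(1)$; and $h_{E_{1}} = h_{D'} - h_{\min(D',D'')} + O(1) \leq h_{D} + O(1)$ off $\Supp(\min(D',D''))$. Combining, $h_{F_{+}}(x) \leq c\, h_{D}(x) + O(1)$ off a proper closed subset of $X_{\overline K}$; applying (1) to $nF_{+}$ then completes the reduction. The main obstacle is part (2): arranging the $\Q$-approximation inside $\mathrm{Prin}(X)_{\R}$ so that the positive/negative part decomposition yields the intended nonzero effective $\Q$-divisors on the correct supports, and bookkeeping the accumulated proper closed subsets on which the successive height comparisons are valid.
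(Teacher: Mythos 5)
Your argument for (1) is the standard one and matches the approach in the cited sources: pass to the mobile part of $|mD|$, use functoriality of heights along the induced map to $\P^{N}$, and invoke Northcott on projective space at bounded degree. Your reduction of (2) to (1) via $\Q$-approximation inside $\Div_{S}(X)_{\R}\cap\mathrm{Prin}(X)_{\R}$ is also correct and is essentially the intended argument; the sign analysis on the disjoint supports $\Supp E_{1}$, $\Supp E_{2}$ does exactly what you need, and the final chain of height comparisons $h_{F_{+}}\leq c\,h_{E_{1}}+O(1)\leq c\,h_{D}+O(1)$ off a proper closed set is sound. One small correction: the identity $\mathrm{Prin}(X)_{\R}=\mathrm{Prin}(X)_{\Q}\otimes_{\Q}\R$ holds for any normal projective variety and has nothing to do with $\Q$-factoriality; where $\Q$-factoriality actually enters is (a) in making $D'$, $D''$, $E_{1}$, $E_{2}$ (and hence their height functions) $\R$-Cartier, and (b) in guaranteeing that $F_{+}$, being a $\Q$-Weil divisor, is $\Q$-Cartier so that $nF_{+}$ is Cartier after clearing denominators — which you do use correctly in the final step.
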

If $\hh$ can be understood as a height function associated with an $\R$-Cartier divisor $D$ and one of the conditions 
in \cref{prop:weak-Northcott} is satisfied, then we can confirm $(*)$.

Now, let us introduce several cases where we can construct a nice canonical height functions 
and prove Kawaguchi-Silverman conjecture by using them.

\subsection{Polarized endomorphisms and nef canonical height}

Polarized endomorphism is probably the first and easiest case that Kawaguchi-Silverman conjecture was proven.
This case is a model case in the study of the conjecture.

\begin{defn}\label{def:pol}
Let $X$ be a projective variety over $\QQ$ and let $f \colon X \longrightarrow X$
be a surjective self-morphism.
Then $f$ is said to be polarized if there is an ample Cartier divisor $H$ such that
$f^{*}H \sim dH$ for some integer $d > 1$.
\end{defn}

Note that in this case we have $d = \d_{1}(f)$.

\begin{rmk}
It is known that the linear equivalence in the definition can be replaced with numerical equivalence.
More precisely, $f$ is polarized if and only if there is a rational number $d > 1$ and a big $\R$-Cartier divisor $D$
such that $f^{*}D \equiv dD$ \cite[Proposition 1.1]{MZ18}.
\end{rmk}

\begin{thm}[{\cite{CS93},\cite[Theorem 5]{KS16b}}]\label{thm:ksc-pol}
Notation as in \cref{def:pol}.
For any $x \in X(\QQ)$, the limit
\begin{align}
\hh_{H,f}(x) := \lim_{n \to \infty} \frac{h_{H}(f^{n}(x))}{d^{n}}
\end{align}
exists.
Moreover the function $\hh_{H,f}$ satisfies 
\begin{align}
&\hh_{H,f} = h_{H} + O(1)\\
&\hh_{H,f} \circ f = d \hh_{H,f}.
\end{align}
In particular, \cref{a=dconj} is true for $f$.
Moreover, $ \alpha_{f}(x) = d = d_{1}(f)$ if and only if $x$ has infinite $f$-orbit.
\end{thm}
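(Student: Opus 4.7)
The plan is to implement the classical Call--Silverman telescoping construction, and then deduce the arithmetic-degree statements via the canonical-height strategy sketched at the start of \cref{sec:canonical-height}.

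The first step is to turn the polarization $f^{*}H \sim dH$ into a uniform estimate on Weil heights. By functoriality of the height machine, there exists a constant $C > 0$ with
\begin{align}
|h_{H}(f(y)) - d\, h_{H}(y)| \leq C \qquad \forall y \in X(\QQ).
\end{align}
Substituting $y = f^{n}(x)$ and dividing by $d^{n+1}$ gives $|a_{n+1}(x) - a_{n}(x)| \leq C/d^{n+1}$, where $a_{n}(x) := h_{H}(f^{n}(x))/d^{n}$. Since $d > 1$, telescoping yields $|a_{n+m}(x) - a_{n}(x)| \leq C/(d^{n}(d-1))$, so the sequence is Cauchy (uniformly in $x$) and the limit $\hh_{H,f}(x)$ exists. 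Taking $n=0$ and $m\to\infty$ gives the bound $|\hh_{H,f}(x) - h_{H}(x)| \leq C/(d-1)$, which is exactly $\hh_{H,f} = h_{H} + O(1)$. The functional equation $\hh_{H,f}\circ f = d\, \hh_{H,f}$ follows by a one-step shift in the defining limit.

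Next I would invoke the canonical-height argument to deduce $\alpha_{f}(x) = d$ for $x$ with infinite orbit. Fix a number field $L$ over which both $x$ and $f$ are defined. Ampleness of $H$ gives the usual Northcott finiteness for $h_{H}$ on $X(L)$, and the bound $\hh_{H,f} = h_{H} + O(1)$ transfers it to $\hh_{H,f}$; in particular condition $(*)$ of \cref{sec:canonical-height} is satisfied. If one had $\hh_{H,f}(x) \leq 0$, then $\hh_{H,f}(f^{n}(x)) = d^{n}\hh_{H,f}(x) \leq 0$ for all $n$, so $h_{H}(f^{n}(x))$ would stay bounded; the orbit would then be a bounded-height subset of $X(L)$, hence finite by Northcott, contradicting the hypothesis. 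Thus $\hh_{H,f}(x) > 0$, and
\begin{align}
h_{H}(f^{n}(x)) \geq \hh_{H,f}(f^{n}(x)) - C/(d-1) = d^{n}\hh_{H,f}(x) - C/(d-1),
\end{align}
which forces $\underline{\alpha}_{f}(x) \geq d$. Combined with \cref{thm:fund-ineq} and $\d_{1}(f) = d$, we obtain $\alpha_{f}(x) = d$. Since every Zariski dense orbit is infinite, this proves \cref{a=dconj} for $f$. Conversely, if $O_{f}(x)$ is finite then $h_{H}$ is bounded along it and $\alpha_{f}(x) = 1 < d$, giving the final equivalence.

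The hard part is essentially nonexistent; this is a classical argument and the survey has already isolated the canonical-height template used in the final step. The one place that requires mild care is the positivity argument $\hh_{H,f}(x) > 0$: it relies on applying Northcott not just to Zariski dense orbits but to merely infinite ones, which is why ampleness of $H$ (and hence the strong Northcott property on a fixed finite extension) is essential rather than the weaker version of \cref{prop:weak-Northcott}.
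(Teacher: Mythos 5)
Your proof is correct and is exactly the standard Call--Silverman telescoping construction together with the canonical-height/Northcott argument that the survey itself isolates as the template in \cref{sec:canonical-height}; this is the same approach as the cited sources (Call--Silverman and Kawaguchi--Silverman) to which the paper defers. The only minor imprecision is in the phrase ``$h_H(f^n(x))$ would stay bounded'' when $\hh_{H,f}(x)\leq 0$: what is literally true is that $h_H(f^n(x))$ is bounded \emph{above} by $C/(d-1)$, which together with the automatic lower bound for an ample height is all Northcott needs, so the substance is fine.
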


Note that for any surjective self-morphism $f \colon X \longrightarrow X$
on a projective variety $X$, there is always a numerically non-zero nef $\R$-divisor $D$ such that
$f^{*}D \equiv \d_{1}(f)D$. 
(This is an application of Perron-Frobenius type theorem \cite{Bir67}. Note that the nef cone $\Nef(X)$ is strictly convex and 
$f^{*}$ preserves the nef cone.)
If $\d_{1}(f) > 1$, we can construct the similar canonical height function
\begin{align}
\hh_{D,f}(x) = \lim_{n \to \infty} \frac{\hh_{D}(f^{n}(x))}{\d_{1}(f)^{n}}
\end{align}
called {\it nef canonical height function}.
This function also satisfies similar properties 
\begin{align}\label{nefcanht}
&\hh_{D,f} = h_{D} + O\left( \sqrt{h_{H}^{+}} \right)\\
&\hh_{D,f} \circ f = \d_{1}(f) \hh_{D,f}
\end{align}
where $h_{H}$ is any ample height function.
Here the $O(1)$ is replaced with $O\left( \sqrt{h_{H}^{+}} \right)$
due to the fact that $f^{*}D \equiv \d_{1}(f)D$ is numerical equivalence and not the linear equivalence.
Note that if $\d_{1}(f) > 1$ and at least $X$ is smooth, it is actually possible to choose nef divisor $D$
so that $f^{*}D \sim_{\R} \d_{1}(f)D$ (cf.\ \cite[Remark 5.11]{Sa17}) and $\hh_{D,f} = h_{D} + O\left( 1\right)$.
But the problem is that we do not have the Northcott property in general for $h_{D}$ when $D$ is nef and not ample.
Let us also remark that a function $\hh_{D,f}$ 
satisfying \cref{nefcanht} is unique.

\subsection{Abelian varieties}
For group endomorphisms on abelian varieties, 
Kawaguchi and Silverman gave a description of the vanishing locus of nef canonical height
function and as a consequence proved \cref{a=dconj}.

\begin{thm}[{\cite{KS16a}}]\label{thm:ksc-abvar}
Let $A$ be an abelian variety over $\QQ$ and let $f \colon X \longrightarrow X$
be a surjective group endomorphism.
Suppose $\d_{1}(f) > 1$.
Let $D$ be a non-zero symmetric nef $\R$-divisor on $A$ such that $f^{*}D \equiv \d_{1}(f)D$.
(We can always make $D$ symmetric by replacing $D$ with $D + [-1]^{*}D$. Note that $f$ commutes with $[-1]$.)
Then for $x \in A(\QQ)$, we can define the canonical height function
\begin{align}
\hh_{D,f}(x) := \lim_{n \to \infty} \frac{h_{D}(f^{n}(x))}{\d_{1}(f)^{n}}.
\end{align}
Moreover, there is a proper abelian subvariety $B \subset A$ such that
\begin{align}
\left\{ x \in A(\QQ) \ \middle|\ \hh_{D,f}(x) = 0 \right\} = B(\QQ) + A(\QQ)_{ \rm tors}
\end{align}
where $A(\QQ)_{ \rm tors}$ is the set of torsion points.

Since torsion points defined over a fixed number field are finite,
the vanishing locus of $\hh_{D,f}$ is Zariski non-dense over a fixed number field.
In particular, \cref{a=dconj} is true for $f$.
\end{thm}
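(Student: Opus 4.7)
The plan is to identify $\hh_{D,f}$ with the classical N\'eron-Tate height attached to $D$, determine its vanishing locus via the structure of symmetric nef divisors on abelian varieties, and then apply the template of \cref{sec:canonical-height}. For the first step, since $f$ is a group endomorphism it commutes with $[-1]$, so $f^{*}D$ is again symmetric. The classical construction attaches to every symmetric $\R$-divisor $E$ on $A$ a unique quadratic function $\hh_{E}^{\mathrm{NT}} \colon A(\QQ) \to \R$ with $\hh_{E}^{\mathrm{NT}} = h_{E} + O(1)$; the assignment $E \mapsto \hh_{E}^{\mathrm{NT}}$ is $\R$-linear and depends only on the numerical class of $E$, because the difference of two numerically equivalent symmetric divisors lies in $\Pic^{0}(A)[2]$ where heights are bounded. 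Hence $\hh_{D}^{\mathrm{NT}} \circ f = \hh_{f^{*}D}^{\mathrm{NT}} = \d_{1}(f)\hh_{D}^{\mathrm{NT}}$, and a standard Tate-telescoping argument using $\d_{1}(f) > 1$ shows that the limit defining $\hh_{D,f}$ converges and equals $\hh_{D}^{\mathrm{NT}}$.

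For the second step, consider the polarization homomorphism $\phi_{D} \colon A \to A^{\vee},\ x \mapsto t_{x}^{*}D - D$, and let $B$ be the identity component of $\ker \phi_{D}$, an abelian subvariety of $A$. The crucial structural input is that $D$ descends numerically through $\pi \colon A \to A/B$: there is an ample $\R$-divisor $D_{0}$ on $A/B$ with $\pi^{*}D_{0} \equiv D$. This is Mumford's descent theorem applied to sufficiently divisible multiples (using the canonical symmetric linearization), extended by $\R$-linearity; ampleness of $D_{0}$ follows because its polarization map has finite kernel by construction. By functoriality, $\hh_{D,f}(x) = \hh_{D_{0}}^{\mathrm{NT}}(\pi(x))$, and the classical positivity for ample divisors gives $\hh_{D_{0}}^{\mathrm{NT}}(y) = 0$ iff $y \in (A/B)(\QQ)_{\mathrm{tors}}$. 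Using the exact sequence $0 \to B \to A \to A/B \to 0$, the surjectivity of $A(\QQ) \to (A/B)(\QQ)$ over the algebraically closed field $\QQ$, and the surjectivity $A[n] \twoheadrightarrow (A/B)[n]$ (so that torsion lifts to torsion), one computes $\pi^{-1}((A/B)(\QQ)_{\mathrm{tors}}) = B(\QQ) + A(\QQ)_{\mathrm{tors}}$. Properness $B \subsetneq A$ follows from $D \not\equiv 0$.

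For the final step, note that for any number field $L$, $A(L)_{\mathrm{tors}}$ is finite by Northcott, so $B(L) + A(L)_{\mathrm{tors}}$ is a finite union of translates of the proper subvariety $B$ and is therefore not Zariski dense in $A$. This verifies condition $(*)$ from the template in \cref{sec:canonical-height}, yielding $\underline{\alpha}_{f}(x) \geq \d_{1}(f)$ for every $x \in A(\QQ)$ with Zariski dense $f$-orbit; combined with \cref{thm:fund-ineq}, we conclude $\alpha_{f}(x) = \d_{1}(f)$. The hardest step is the descent in the second stage: showing that every non-zero symmetric nef $\R$-divisor on $A$ is numerically the pullback of an ample $\R$-divisor from the quotient by the identity component of its polarization kernel. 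The $\Q$-divisor case is standard Mumford descent, but the $\R$-divisor version requires an approximation argument that preserves both the symmetry and the precise abelian subvariety $B$, while maintaining ampleness of the descended class in the limit.
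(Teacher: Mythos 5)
Your first step (identifying $\hh_{D,f}$ with the N\'eron--Tate height) and your overall template (describe the vanishing locus, then invoke Northcott over a fixed number field) match the route in \cite{KS16a}, as the paper's remark after the theorem already indicates. The gap is in the descent step, and it is more serious than you suggest: the claim that every non-zero symmetric nef $\R$-divisor on $A$ is numerically the pullback of an \emph{ample} $\R$-divisor from the quotient by ``the identity component of its polarization kernel'' is not merely hard to extend from $\Q$- to $\R$-coefficients --- it is false. For an $\R$-divisor $D$ the object $\phi_{D}$ lives in $\Hom(A,A^{\vee})\otimes\R$, not in $\Hom(A,A^{\vee})$, and the kernel of the associated hermitian form $H_{D}$ on $T_{0}A$ is just a complex subspace that need not be a $\Q$-rational subspace of $T_{0}A$, hence need not be the tangent space of any abelian subvariety. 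Concretely, take $A=E\times E$ with $\End E=\Z$, so $N^{1}(A)_{\R}$ is the space of real symmetric $2\times 2$ forms and $\Nef(A)$ is the cone of positive semi-definite ones; the boundary class $D$ with matrix $\bigl(\begin{smallmatrix}1&\sqrt{2}\\ \sqrt{2}&2\end{smallmatrix}\bigr)$ is nef, non-zero, symmetric, has rank-one hermitian form with irrational kernel line, and there is no non-zero abelian subvariety $B$ with $D|_{B}\equiv 0$. So the quotient you would form is $A/B=A$ itself and the ``descended'' class is $D$, which is not ample. Your step ``classical positivity for ample divisors gives $\hh_{D_{0}}^{\mathrm{NT}}(y)=0$ iff $y$ torsion'' therefore has nothing to apply to.

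The theorem is nonetheless true (in that example the vanishing locus is exactly $A(\QQ)_{\mathrm{tors}}$, so $B=0$ works), but the mechanism is arithmetic rather than geometric. After descending $D$ through the \emph{maximal} abelian subvariety $B$ with $D|_{B}\equiv 0$, the resulting class $D_{0}$ on $A/B$ is nef and degenerates on no abelian subvariety, but may still fail to be ample. One must then show directly that the positive semi-definite N\'eron--Tate form $\hat{q}_{A/B,D_{0}}$ on the finitely generated Mordell--Weil group $(A/B)(L)$ has trivial null lattice for every number field $L$: the incommensurability of the irrational null directions of $H_{D_{0}}$ with the $\Q$-structure of the lattice $(A/B)(L)/\mathrm{tors}$ (made quantitative via Cauchy--Schwarz for the N\'eron--Tate pairing and Northcott for an ample reference class) is precisely what forces the arithmetic null space to be smaller than the geometric kernel of $H_{D_{0}}$. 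This is the content you would need to replace Mumford descent; the ``approximation argument preserving symmetry and $B$'' you envision cannot supply ampleness, since ampleness genuinely fails.
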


\begin{rmk}
In \cite{KS16a}, instead of $\hh_{D,f}$, they use 
\begin{align}
\hat{q}_{A,D} (x) = \lim_{n \to \infty} \frac{h_{D}(nx)}{n^{2}},
\end{align}
which is usual quadratic part of canonical height function on abelian varieties.
As we choose $D$ symmetric, we can check that this function satisfies \cref{nefcanht} and hence $\hh_{D,f} = \hat{q}_{A,D}$.
\end{rmk}

\begin{rmk}\label{rmk:ksc-abvar-all}
By a following work by Silverman \cite{Sil17},
\cref{a=dconj} is proven for surjective self-morphism on abelian varieties.
Note that every dominant rational self-map on an abelian variety is automatically a morphism.
This follows from more general fact that every rational map from a smooth variety to abelian variety is 
a morphism because there is no rational curves on an abelian variety (cf.\ \cite[Corollary 1.5]{KM98}).
Therefore, \cref{a=dconj} is completely solved for abelian varieties.
\end{rmk}

Based on this abelian variety case and 
a prior work by Silverman on the monomial maps on algebraic torus (see \cref{subsec:monomial-maps}),
Sano and the author proved \cref{a=dconj} for all surjective self-morphisms on semi-abelian varieties.

\begin{thm}[{\cite[Theorem 1.1]{MS20}}]\label{thm:ksc-semiabvar}
Let $X$ be a semi-abelian variety over $\QQ$ and let $f \colon X \longrightarrow X$ be a surjective self-morphism.
Then \cref{a=dconj} holds for $f$.
\end{thm}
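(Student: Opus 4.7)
The plan is to reduce \cref{a=dconj} for $X$ to the two previously established cases: surjective endomorphisms of abelian varieties (\cref{thm:ksc-abvar}, \cref{rmk:ksc-abvar-all}) and Silverman's theorem for monomial maps on algebraic tori, via the Chevalley exact sequence
\begin{align}
0 \longrightarrow T \longrightarrow X \stackrel{\pi}{\longrightarrow} A \longrightarrow 0,
\end{align}
in which $T$ is an algebraic torus and $A$ is an abelian variety.

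First, by rigidity of morphisms of semi-abelian varieties, $f$ can be written as $f = \tau_{c} \circ \varphi$ for some surjective group endomorphism $\varphi \colon X \longrightarrow X$ and some translation $\tau_{c}$ by $c \in X(\QQ)$. Because $T$ is the unique maximal connected affine subgroup of $X$, $\varphi$ preserves $T$ and induces surjective group endomorphisms $\varphi_{T}$ of $T$ and $\varphi_{A}$ of $A$. On any smooth equivariant compactification of $X$, translations act trivially on $N^{1}$, so $\d_{1}(f) = \d_{1}(\varphi) = \max\{\d_{1}(\varphi_{T}), \d_{1}(\varphi_{A})\}$.

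Now let $x \in X_{f}(\QQ)$ have Zariski dense orbit. Then $\pi(x)$ has Zariski dense orbit in $A$ under $\bar{f} := \tau_{\pi(c)} \circ \varphi_{A}$, whose dynamical degree equals $\d_{1}(\varphi_{A})$. By \cref{rmk:ksc-abvar-all} combined with \cref{prop:ad-and-semiconj}, $\underline{\alpha}_{f}(x) \geq \d_{1}(\varphi_{A})$, and together with \cref{thm:fund-ineq} this settles the subcase $\d_{1}(\varphi_{A}) \geq \d_{1}(\varphi_{T})$.

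The remaining case $\d_{1}(\varphi_{T}) > \d_{1}(\varphi_{A})$ is the main obstacle: the dominant dynamics is torus-theoretic, yet $f$ does not preserve the fibers of $\pi$. The plan is to construct a canonical height $\hh$ on $X$ with $\hh \circ f = \d_{1}(f) \hh$ up to a controlled error, by combining the nef canonical height of $\bar{f}$ pulled back via $\pi$ with a canonical height adapted to the torus-direction dynamics of $\varphi$, modelled on the monomial-map construction. After passing to an iterate and, if necessary, conjugating by a translation to absorb the part of $c$ lying in the image of $\varphi - \mathrm{id}$, one reduces to the situation where the return map to a suitable fiber of $\pi$ is a morphism of $T$-torsors conjugate to $\varphi_{T}$, to which Silverman's canonical height applies. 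The delicate technical point, where I expect the main difficulty, is verifying the weak Northcott property (\cref{prop:weak-Northcott}) for the resulting $\hh$: one needs to realize $\hh$ as a height associated to an $\R$-Cartier divisor $D$ on an equivariant compactification $\bar{X}$ that is $\R$-linearly equivalent to two distinct effective $\R$-divisors, which forces a careful choice of compactification compatible with the eigenspace decomposition of $\varphi^{*}$ on $N^{1}(\bar{X})$.
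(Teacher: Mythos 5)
This is a survey article: the paper does not prove \cref{thm:ksc-semiabvar} itself but cites \cite{MS20} and only indicates the outline of the strategy (building on the abelian-variety case and Silverman's monomial-map theorem). Your overall scaffold — Chevalley exact sequence $0 \to T \to X \to A \to 0$, the decomposition $f = \tau_c \circ \varphi$ into a translation and a group endomorphism, the identity $\d_1(f) = \d_1(\varphi) = \max\{\d_1(\varphi_T), \d_1(\varphi_A)\}$, and a case split on which of the two degrees dominates — is consistent with that indicated outline, and your treatment of the subcase $\d_1(\varphi_A) \geq \d_1(\varphi_T)$ via \cref{prop:ad-and-semiconj}, \cref{rmk:ksc-abvar-all} and \cref{thm:fund-ineq} is correct and complete.

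The gap is in the hard subcase $\d_1(\varphi_T) > \d_1(\varphi_A)$, and it is not merely a matter of filling in routine details. Your idea of ``reducing to the situation where the return map to a suitable fiber of $\pi$ is a morphism of $T$-torsors conjugate to $\varphi_T$'' cannot work as stated: when $O_f(x)$ is Zariski dense in $X$, its image $O_{\bar f}(\pi(x))$ is Zariski dense in $A$, so the orbit never returns to a fixed fiber and there is no return map to analyze. More fundamentally, a semi-abelian variety is only an extension, not a product, so there is no globally defined ``torus direction'' onto which one could project to invoke Silverman's monomial-map canonical height directly. The approach in \cite{MS20} instead passes to a concrete equivariant compactification of $X$ built from the $T$-torsor structure over $A$ (roughly, compactifying each $\Gm$-factor to $\P^1$ fiberwise), works out the action of $f^*$ on $N^1$ of this compactification (which mixes the monomial-map matrix with $\varphi_A^*$), and then constructs and analyzes canonical heights attached to explicit eigendivisors, keeping track of the linear (torus) and quadratic (abelian) parts. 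That compactification and the explicit eigendivisor analysis are the missing ingredients; absorbing the translation $c$ is a secondary issue and cannot in general be done when $\varphi - \mathrm{id}$ is not surjective, so that case also has to be addressed directly rather than conjugated away.
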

In this case, other parts of \cref{KSC} are actually true, i.e.\ the arithmetic degrees are algebraic integers
and they take only finitely many values. See \cite{MS20} for details.

Having seen that the conjecture has been solved for self-morphisms on semi-abelian varieties, 
it is natural to ask about its validity for an arbitrary algebraic groups (they are automatically quasi-projective cf.\
 \cite[\href{https://stacks.math.columbia.edu/tag/0BF7}{Tag 0BF7}]{stacks-project}).
As for linear algebraic groups and their group endomorphisms, the answer is positive.
\begin{thm}[{\cite[Theorem 6.1]{Ma20b}}]\label{thm:ksc-linalggrp}
Let $X$ be a connected linear algebraic group over $\QQ$ and let $f \colon X \longrightarrow X$
be a surjective group endomorphism.
Then \cref{a=dconj} holds for $f$.
\end{thm}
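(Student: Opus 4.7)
The plan is to prove \cref{thm:ksc-linalggrp} by induction on $\dim X$, using the structure theory of connected linear algebraic groups in characteristic zero to reduce to cases that are either trivial or already covered by \cref{thm:ksc-semiabvar}.

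First I would dispose of the base cases. If $X$ is a torus, then $X$ is a semi-abelian variety and \cref{thm:ksc-semiabvar} applies directly. If $X$ is unipotent, then in characteristic zero the exponential map gives an isomorphism of varieties $\exp \colon \mathrm{Lie}(X) \xrightarrow{\sim} X$ which conjugates any surjective group endomorphism $f$ to a linear self-map of affine space. In particular $\d_{1}(f) = 1$, so combined with the trivial bound $\alpha_{f}(x) \geq 1$ and the fundamental inequality $\overline{\alpha}_{f}(x) \leq \d_{1}(f)$ of \cref{thm:fund-ineq}, we obtain $\alpha_{f}(x) = 1 = \d_{1}(f)$.

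For the inductive step, I would locate a non-trivial proper characteristic connected normal subgroup $N \subsetneq X$: if $X$ is non-reductive, take $N = R_{u}(X)$ the unipotent radical; otherwise take $N$ to be the connected center $Z(X)^{\circ}$ or the derived subgroup of $X$. Since $N$ is characteristic, $f$ restricts to a surjective endomorphism $f|_{N}$ of $N$ and descends to a surjective endomorphism $\bar{f}$ of $X/N$, both acting on connected linear algebraic groups of strictly smaller dimension. The quotient map $\pi \colon X \to X/N$ is a fibration whose fibers are translates of $N$, and by the Dinh--Sibony type product formula for dynamical degrees along such fibrations one expects $\d_{1}(f) = \max(\d_{1}(f|_{N}), \d_{1}(\bar{f}))$.

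Now if $x \in X(\QQ)$ has Zariski-dense orbit, so does $\pi(x)$ in $X/N$, and the induction hypothesis gives $\alpha_{\bar{f}}(\pi(x)) = \d_{1}(\bar{f})$; combined with \cref{prop:ad-and-semiconj} this yields $\alpha_{f}(x) \geq \d_{1}(\bar{f})$, which settles the case $\d_{1}(f) = \d_{1}(\bar{f})$. The main obstacle is the remaining case $\d_{1}(f) = \d_{1}(f|_{N}) > \d_{1}(\bar{f})$, where the dynamical complexity lives on $N$ while the orbit of $x$ does not sit in any single coset of $N$. Here I would construct a canonical height function associated to an eigendivisor of $f^{*}$ coming from the $N$-direction of the extension, following the template of \cref{sec:canonical-height}, and combine it with a weak Northcott property in the spirit of \cref{prop:weak-Northcott} to rule out a Zariski-dense vanishing locus. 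The delicate point is verifying the weak Northcott condition for such a divisor, which is typically neither ample nor pulled back from the quotient.
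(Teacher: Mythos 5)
Your overall plan (reduce via the structure theory of connected linear algebraic groups in characteristic zero) is the right direction, but as written the proposal has two genuine gaps, and the second one forces you into a ``delicate case'' that the correct argument avoids entirely.

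First, the induction is missing a base case: simple (and more generally semisimple) groups. If $X$ is semisimple, neither $Z(X)^\circ$ (which is finite) nor $[X,X]=X$ gives a proper nontrivial connected characteristic subgroup, so your induction cannot decrease dimension. The missing observation is that in characteristic zero every surjective group endomorphism of a connected semisimple group is an isogeny, hence (lifting to the simply connected cover and noting the kernel is preserved) an automorphism, and an iterate of it is inner; an inner automorphism $x\mapsto gxg^{-1}$ is the restriction of a linear map on the ambient matrix space, so $\d_1 = 1$. Thus \cref{a=dconj} is vacuous for semisimple $X$ by \cref{thm:fund-ineq}. Second, once you know $\d_1 = 1$ for both unipotent and semisimple groups, the delicate case $\d_1(f)=\d_1(f|_N)>\d_1(\bar f)$ never occurs if you choose $N$ correctly: take $N=R_u(X)$ when $X$ is not reductive, and take $N=[X,X]$ (not $Z(X)^\circ$!) when $X$ is reductive and not a torus. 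In both cases $\d_1(f|_N)=1$, the product formula gives $\d_1(f)=\d_1(\bar f)$, the quotient $X/N$ is a connected linear algebraic group of smaller dimension (a torus in the reductive case), and the induction closes via \cref{prop:ad-and-semiconj} together with \cref{thm:fund-ineq}. Your proposed resolution of the delicate case by constructing a fiber-direction canonical height and verifying weak Northcott is not merely unfinished, as you acknowledge --- it is a detour that the right decomposition renders unnecessary, and it is exactly the kind of difficulty the actual proof is structured to avoid. One further point you should still justify is the identification of the relative dynamical degree of $f$ over $X\to X/N$ with $\d_1(f|_N)$: the induced map on a coset $xN\to f(x)N$ is $f|_N$ only up to a twist by an inner automorphism, so this step is plausible but not free.
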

Note that it is very difficult to remove the assumption that
$f$ is a group endomorphism, because for example 
the unipotent group consisting of upper triangular  matrices with 
diagonal entries $1$ is an affine space as varieties.
More generally, every linear algebraic group is 
birational to a projective space as varieties (cf.\ \cite[14.14]{Bo91}).

\subsection{Monomial maps}\label{subsec:monomial-maps}

A monomial map is a group endomorphism of an algebraic torus $\Gm^{N}$.
It is determined by an $N \times N$ matrix $A=(a_{ij})$ with integer coefficients. 
The map corresponds to $A$ is
\begin{align}
\f_{A} \colon &\Gm^{N} \longrightarrow \Gm^{N} \\
&(X_{1},\dots, X_{N}) \mapsto 
(X_{1}^{a_{11}}X_{2}^{a_{12}} \cdots X_{N}^{a_{1N}}, \dots, X_{1}^{a_{N1}}X_{2}^{a_{N2}} \cdots X_{N}^{a_{NN}}).
\end{align}
This map is dominant if and only if $\det A \neq 0$.
Note that the first dynamical degree of $\f_{A}$ is equal to the spectral radius of $A$.
Silverman introduced the following canonical height function:

\begin{defn}
Let $h \colon \Gm^{N}(\QQ) \longrightarrow \R$ be the naive logarithmic height function (defined by embedding $\Gm^{N} \subset \P^{N}$).
Let 
\begin{align}
l(A) = \left(\txt{the size of the maximal Jordan blocks of $A$\\ corresponding to eigenvalues of maximal absolute value} \right)- 1.
\end{align}
Then define
\begin{align}\label{eq:canonical-ht-monomial-map}
\hh_{\f_{A}}(x) = \limsup_{n \to \infty} \frac{h(\f_{A}^{n}(x))}{n^{l(A)}\d_{1}(f)^{n}}
\end{align}
for $x \in \Gm^{N}(\QQ)$.
\end{defn}

\begin{rmk}
The factor $n^{l(A)}$ in the definition of canonical height corresponds to the following fact on the growth rate
of degree sequence \cite[Theorem 24]{Sil12}.
Let $\deg_{a} (\f_{A}^{n})$ be the algebraic degree, i.e.\ degree of defining (coprime) homogeneous polynomials of 
$\f_{A}$ considered as a rational self-map of $\P^{N}$.
Then $\deg_{a}(\f_{A}^{n}) \gg \ll n^{l(A)} \d_{1}(\f_{A})^{n}$ for $n \geq 1$.
\end{rmk}

\begin{thm}[{\cite[Theorem 4]{Sil12}}]\label{thm:ksc-monomial-maps}
Notation as above. Suppose $\d_{1}(\f_{A}) > 1$.
\begin{enumerate}
    \item The arithmetic degrees of $\f_A$ satisfy
    \[\{\alpha_{\f_A}(x) \mid x \in \Gm^{N}(\QQ)\} \subset \{\text{eigenvalues of } A\} \cup \{1\}.\]
    In particular, $\alpha_{\f_A}(x)$ is an algebraic integer for all $x \in \Gm^{N}(\QQ)$.
    
    \item Let $x \in \Gm^{N}(\QQ)$ be a point with $\hh_{\f_A}(x) = 0$.
     Then the orbit $O_{\phi}(x)$ is contained in a proper (possibly disconnected) algebraic subgroup of $\Gm^{N}$.
     In particular, \cref{a=dconj} holds for $\f_{A}$.
    
    \item If the matrix $A$ is diagonalizable over $\mathbb{C}$, then
    \[\hh_{\f_{A}}(x) = 0 \Longleftrightarrow \alpha_{\f_{A}}(x) < \delta_{1}(\f_{A}).\]
    
    \item If the characteristic polynomial of the matrix $A$ is irreducible over $\mathbb{Q}$, then
    \[\hh_{\f_{A}}(x) = 0 \Longleftrightarrow O_{\f_A}(x) \text{ is finite.}\]
\end{enumerate}

\end{thm}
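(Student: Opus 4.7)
The overall strategy is to translate the monomial dynamics into linear dynamics on logarithms of absolute values and then apply Jordan normal form analysis. Fix a number field $K$ containing the coordinates of $x$ and, for each place $v \in M_{K}$, set $\lambda_{v}(x) := (\log|x_{1}|_{v}, \dots, \log|x_{N}|_{v}) \in \R^{N}$. The monomial formula gives the linearization
\begin{align}
\lambda_{v}(\f_{A}^{n}(x)) = A^{n}\lambda_{v}(x),
\end{align}
and the naive height on $\Gm^{N} \subset \P^{N}$ admits (up to an $O(1)$) the expression
\begin{align}
h(\f_{A}^{n}(x)) = \sum_{v \in M_{K}} \max\bigl\{0, (A^{n}\lambda_{v}(x))_{1}, \dots, (A^{n}\lambda_{v}(x))_{N}\bigr\}.
\end{align}
The problem is thereby reduced to controlling the asymptotics of $\max_{i}(A^{n}u)_{i}$ for fixed $u \in \R^{N}$, which is pure linear algebra.

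For part (1), decompose $u$ along the generalized eigenspaces of $A \otimes \C$ as $u = \sum_{\mu} u_{\mu}$. Then $A^{n}u_{\mu}$ grows like $n^{k_{\mu}}\mu^{n}$, where $k_{\mu}+1$ is the largest Jordan level in which $u_{\mu}$ is supported. Summing over places and extracting $n$-th roots shows that $\overline{\alpha}_{\f_{A}}(x)$ (and in fact $\alpha_{\f_{A}}(x)$, whose existence is visible from the same expansion) equals $|\mu|$ for the dominant eigenvalue $\mu$ activated by some $\lambda_{v}(x)$, and equals $1$ if only modulus-one eigenvalues are activated. Since $A$ has integer entries, each such $|\mu|$ is a real algebraic integer, giving part (1).

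For part (2), the condition $\hh_{\f_{A}}(x) = 0$ means $h(\f_{A}^{n}(x)) = o(n^{l(A)}\d_{1}(\f_{A})^{n})$. From the Jordan expansion this forces every $\lambda_{v}(x)$ to have vanishing component in the top Jordan level of each eigenspace of $A$ with $|\mu| = \d_{1}(\f_{A})$. The collection of these vanishing conditions is Galois-stable, so it descends to an $A$-invariant proper $\Q$-subspace $W \subset \Q^{N}$; passing to $W^{\perp} \cap \Z^{N}$ yields a nonzero integer vector $\cc = (c_{1}, \dots, c_{N})$ with $\sum_{i} c_{i}\log|x_{i}|_{v} = 0$ for every $v$. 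The product formula combined with Kronecker's theorem then shows that $\prod_{i} x_{i}^{c_{i}}$ is a root of unity, so $O_{\f_{A}}(x)$ lies in a proper (possibly disconnected) algebraic subgroup of $\Gm^{N}$, hence is not Zariski dense. Combined with \cref{thm:fund-ineq} and the canonical-height recipe at the start of \cref{sec:canonical-height} applied to $\hh_{\f_{A}}$, this proves \cref{a=dconj} for $\f_{A}$.

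Parts (3) and (4) follow by sharpening the same analysis. Diagonalizability of $A$ eliminates the polynomial factors $n^{k_{\mu}}$, so $\alpha_{\f_{A}}(x) < \d_{1}(\f_{A})$ occurs precisely when no $\lambda_{v}(x)$ has a component in the $\d_{1}$-eigenspace of $A$, which matches $\hh_{\f_{A}}(x) = 0$. Irreducibility of the characteristic polynomial means Galois acts transitively on the eigenvalues, so the only proper $A$-invariant $\Q$-subspace of $\Q^{N}$ is $0$; consequently $\hh_{\f_{A}}(x) = 0$ forces $\lambda_{v}(x) = 0$ at every $v$, hence $|x_{i}|_{v} = 1$ for all $i, v$, and by Kronecker each $x_{i}$ is a root of unity, so $O_{\f_{A}}(x)$ is finite. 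The main obstacle will be the descent step in part (2): showing that place-by-place vanishing of the top-eigenspace projection aggregates into a single $\Q$-rational linear relation on the $\log|x_{i}|_{v}$ holding simultaneously at every $v$. This rests on Galois equivariance of the eigenspace decomposition (using $A \in M_{N}(\Z)$) together with the delicate fact that cancellations inside $\max_{i}(A^{n}u)_{i}$ across coordinates cannot persist for all $n$ unless the dominant Jordan component of $u$ vanishes.
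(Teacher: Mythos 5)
Your overall plan---linearize via $\lambda_{v}(x) := (\log|x_{1}|_{v}, \dots, \log|x_{N}|_{v})$ and study the Jordan structure of $A$---is exactly Silverman's route in \cite{Sil12}, and the outlines of parts (1), (3), and (4) follow that source. The genuine gap is the descent step in part (2), which you correctly single out as the main obstacle but for which you propose the wrong mechanism. The subspace $W_{\R} \subset \R^{N}$ of vectors with vanishing dominant Jordan component is in general \emph{not} defined over $\Q$: Galois conjugation over $\Q$ does not preserve the set of eigenvalues of maximal modulus. Already for the Fibonacci companion matrix $\bigl(\begin{smallmatrix} 1 & 1 \\ 1 & 0 \end{smallmatrix}\bigr)$, the non-dominant line is $\R\cdot\bigl(\tfrac{1-\sqrt{5}}{2},1\bigr)$, which meets $\Q^{2}$ only at the origin, so ``Galois equivariance of the eigenspace decomposition'' cannot manufacture a $\Q$-rational $W$ from the place-by-place vanishing conditions.

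The correct mechanism, and the one in \cite{Sil12}, is a discreteness fact about the multiplicative group itself rather than about $A$: for any $x \in \Gm^{N}(\QQ)$, the $\R$-span $V_{0}$ of $\{\lambda_{v}(x) : v \in M_{K}\}$ is \emph{automatically} $\Q$-rational. One sees this by checking that $V_{0}^{\perp}$ equals the $\R$-span of the lattice $L = \{\cc \in \Z^{N} : \prod_{i}x_{i}^{c_{i}} \text{ is a root of unity}\}$: the inclusion $L\otimes\R \subseteq V_{0}^{\perp}$ is Kronecker's theorem, and equality holds because the image of $\Z^{N}$ under $\cc \mapsto \bigl(\sum_{i}c_{i}\log|x_{i}|_{v}\bigr)_{v}$ is a discrete group with kernel exactly $L$ (a Northcott/Dirichlet $S$-unit argument), forcing $\dim V_{0} = N - \rank L$. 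Since $A$ has integer entries, the $A$-invariant hull $V = \sum_{n\geq 0}A^{n}V_{0}$ is then also $\Q$-rational. The hypothesis $\hh_{\f_{A}}(x)=0$ gives $V_{0} \subseteq W_{\R}$, hence $V \subseteq W_{\R} \subsetneq \R^{N}$, and any nonzero $\cc \in V^{\perp}\cap\Z^{N}$ supplies the integer relation; from this point on your write-up (Kronecker, finiteness of roots of unity in the field of definition, containment in a proper algebraic subgroup) is correct. In short, the descent rests on the arithmetic of $x$ via discreteness of the logarithmic embedding, not on the arithmetic of $A$'s eigenvalues.
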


\begin{rmk}
Based on this result, Lin proved the existence of arithmetic degrees for monomial maps on projective toric varieties.
(Note that \cref{a=dconj} for this case follows directly from \cref{thm:ksc-monomial-maps}.)
Moreover, Lin constructed an example of monomial maps for which the sequence in \cref{eq:canonical-ht-monomial-map}
has infinitely many accumulation points \cite[Example 3.1]{Li19}.
\end{rmk}

\subsection{Automorphisms on surfaces and Hyper-K\"ahler varieties}\label{auto-surf-hk}

When you have more than one eigendivisors and can construct canonical height functions 
associated with them, it is reasonable to expect that the sum of them might have desired positivity
even if each individual function does not.
This idea already appeared in \cite{Sil91}, and now has numerous variants.

Let us consider an automorphism $f \colon X \longrightarrow X$ with $\d_{1}(f)>1$.
Then we have two nef canonical height functions $\hh_{D_{+},f}$ and $\hh_{D_{-}, f^{-1}}$
with respect to $f$ and its inverse $f^{-1}$.
Here $D_{+}$ and $D_{-}$ are non-zero nef $\R$-divisors such that $f^{*}D_{+} \equiv \d_{1}(f) D_{+}$
and $(f^{-1})^{*}D_{-} \equiv \d_{1}(f^{-1})D_{-}$.
(Note that we have $\d_{1}(f^{-1})=\d_{\dim X -1}(f)$ and log concavity implies $\d_{\dim X -1}(f) > 1$.)
Then 
\begin{align}\label{auto-cano-ht}
\hh_{D_{+},f} + \hh_{D_{-}, f^{-1}} = h_{D_{+} + D_{-}} + O\left( \sqrt{h_{H}^{+}} \right)
\end{align}
would have positivity property if so does the divisor $D_{+} + D_{-}$.
Kawaguchi originally used this strategy for surfaces and later
Lesieutre and Satriano extended this idea to Hyper-K\"ahler varieties.

\begin{thm}[{\cite{Ka08}, \cite[Theorem 2]{KS14}, \cite[Theorem 1.2]{LS21}}]\label{thm:autos}
\cref{a=dconj} is true for automorphisms on 
\begin{enumerate}
\item smooth projective surfaces;
\item Hyper-K\"ahler varieties.
\end{enumerate}
In particular,  \cref{a=dconj} holds for all surjective self-morphisms on Hyper-K\"ahler varieties 
because a surjective self-morphism on a Hyper-K\"ahler variety is automatically an automorphism.
\end{thm}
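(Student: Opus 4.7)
The plan is to realize the strategy sketched immediately above the theorem, using two nef canonical heights and positivity of the sum of the corresponding eigendivisors. One reduces at once to $\d_{1}(f)>1$, since if $\d_{1}(f)=1$ then \cref{thm:fund-ineq} together with $h_{H}^{+}\ge 1$ forces $\alpha_{f}(x)=1=\d_{1}(f)$. Since $f$ is an automorphism, log-concavity of dynamical degrees gives $\d_{1}(f^{-1})=\d_{\dim X-1}(f)>1$, and a Perron--Frobenius argument on the nef cone supplies nonzero nef $\R$-divisors $D_{+}, D_{-}$ with $f^{*}D_{+}\equiv\d_{1}(f)D_{+}$ and $(f^{-1})^{*}D_{-}\equiv\d_{1}(f^{-1})D_{-}$; equivalently $f^{*}D_{\pm}\equiv\lambda_{\pm}D_{\pm}$ with $|\lambda_{+}|>1$ and $|\lambda_{-}|<1$. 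Tate's telescope then produces nef canonical heights $\hh_{\pm}=h_{D_{\pm}}+O(\sqrt{h_{H}^{+}})$ satisfying $\hh_{+}\circ f=\d_{1}(f)\hh_{+}$ and $\hh_{-}\circ f=\d_{1}(f)^{-1}\hh_{-}$.

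The geometric heart of the proof is that $D_{+}+D_{-}$ is big. For a smooth projective surface, $f^{*}$ is an isometry of the intersection pairing, so $(D_{\pm})^{2}=\lambda_{\pm}^{2}(D_{\pm})^{2}$ forces $(D_{\pm})^{2}=0$; since $D_{+},D_{-}$ are $\R$-linearly independent nef classes with vanishing self-intersection, Hodge index yields $(D_{+}\cdot D_{-})>0$ and hence $(D_{+}+D_{-})^{2}>0$. For a Hyper-K\"ahler variety, $f^{*}$ preserves the Beauville--Bogomolov--Fujiki form $q$, so the same argument gives $q(D_{\pm})=0$; both classes lie in the closure of the positive cone and are linearly independent, so the Hodge-index property of $q$ on that cone forces $q(D_{+},D_{-})>0$, and Fujiki's relation $(D)^{\dim X}=c\cdot q(D)^{\dim X/2}$ with $c>0$ yields $(D_{+}+D_{-})^{\dim X}>0$. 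In either case $D_{+}+D_{-}$ is nef with positive top self-intersection, hence big.

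To conclude, let $x\in X_{f}(\QQ)$ have Zariski dense forward orbit, defined over a number field $L$. Set $\hh=\hh_{+}+\hh_{-}=h_{D_{+}+D_{-}}+O(\sqrt{h_{H}^{+}})$, and suppose for contradiction $\hh_{+}(x)\le 0$. Then $\hh(f^{n}(x))=\d_{1}(f)^{n}\hh_{+}(x)+\d_{1}(f)^{-n}\hh_{-}(x)\le |\hh_{-}(x)|$ for all $n\ge 0$, so $\hh(f^{n}(x))$ is bounded above. Bigness of $D_{+}+D_{-}$ provides $m\ge 1$ and an effective divisor $E$ with $m(D_{+}+D_{-})\sim H+E$, so $h_{H}(y)\le m\,h_{D_{+}+D_{-}}(y)+O(1)$ for $y\notin\Supp E$. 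Density of the orbit forces $f^{n}(x)\notin\Supp E$ for infinitely many $n$, and for such $n$ the chain $h_{H}(f^{n}(x))\le m\,\hh(f^{n}(x))+O(\sqrt{h_{H}^{+}(f^{n}(x))})+O(1)\le B+C\sqrt{h_{H}^{+}(f^{n}(x))}$ bounds $h_{H}(f^{n}(x))$ by elementary algebra. Northcott on $X(L)$ then confines this subsequence to a finite set, forcing $x$ to be preperiodic and contradicting density. Hence $\hh_{+}(x)>0$, whence $h_{H}(f^{n}(x))\ge c\,\d_{1}(f)^{n}\hh_{+}(x)+O(1)$ yields $\underline{\alpha}_{f}(x)\ge\d_{1}(f)$; combined with \cref{thm:fund-ineq}, $\alpha_{f}(x)=\d_{1}(f)$.

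The principal obstacle is the bigness step, especially in the Hyper-K\"ahler case, where one must work with the BBF form and invoke Fujiki's relation in order to convert the Hodge-index positivity $q(D_{+},D_{-})>0$ into positivity of the top self-intersection $(D_{+}+D_{-})^{\dim X}$. A secondary technical point is that the $O(\sqrt{h_{H}^{+}})$ error in the nef canonical heights prevents a direct appeal to \cref{prop:weak-Northcott}; the effective decomposition of $D_{+}+D_{-}$ together with ample Northcott circumvents this by reducing the question to a bounded-height subsequence of the orbit.
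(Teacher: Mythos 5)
Your proposal is correct and follows essentially the same route as the paper and the cited works (Kawaguchi for surfaces, Lesieutre--Satriano for Hyper-K\"ahler): construct the two nef canonical heights attached to eigendivisors $D_{+},D_{-}$ for $f$ and $f^{-1}$, show $D_{+}+D_{-}$ is big via isotropy of each $D_{\pm}$ under the $f^{*}$-invariant pairing (intersection form, respectively BBF form plus Fujiki's relation), and then combine bigness with Northcott to force $\hh_{D_{+},f}(x)>0$ on dense orbits. Your concluding step is phrased as a contradiction via the effective decomposition $m(D_{+}+D_{-})\sim H+E$, whereas the paper argues directly that the sum of canonical heights tends to infinity along a subsequence in the big locus, but the two formulations are interchangeable.
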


\begin{rmk}\label{rmk:ksc-surf}
\cref{a=dconj} is now completely proven for surjective self-morphisms on projective surfaces (cf.\ \cref{thm:ksc-surface}).
Non-isomorphic surjective self-morphisms on smooth projective surfaces were proven by Sano, Shibata, and the author \cite{MSS18a},
and later Meng and Zhang proved the singular case \cite{MZ22}.
Except the Kawaguchi's automorphism case, the proof heavily relies on classification (or minimal model theory)
of surfaces.
\end{rmk}

\cref{thm:autos} follows from the following.
\begin{prop}[{\cite{Ka08},\cite{LS21}}]
Let $X$ be one of the classes of varieties in \cref{thm:autos}.
Let $f \colon X \longrightarrow X $ be an automorphism and suppose $\d_{1}(f) > 1$.
Then we have $\d_{1}(f^{-1}) = \d_{1}(f)$ and
there are nef $\R$-divisors $D_{+}$ and $D_{-}$ on $X$ such that
\begin{align}
&f^{*}D_{+} \sim_{\R} \d_{1}(f) D_{+}\\
&(f^{-1})^{*}D_{-} \sim_{\R} \d_{1}(f) D_{-}\\
&\text{$D_{+} + D_{-}$ is nef and big}.
\end{align}
In particular, the function $\hh_{D_{+},f} + \hh_{D_{-},f^{-1}} $ is not bounded above on any Zariski dense set of 
$K$-rational points with bounded extension degree $[K:\Q]$.
\end{prop}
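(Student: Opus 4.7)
The plan is to first establish $\d_{1}(f^{-1}) = \d_{1}(f)$, then produce $D_{+}$ and $D_{-}$ as Perron-Frobenius eigenvectors on the nef cone, and finally prove $D_{+} + D_{-}$ is nef and big via a Hodge-index (surface) or Fujiki-relation (hyper-K\"ahler) argument; the unboundedness of $\hh_{D_{+},f} + \hh_{D_{-},f^{-1}}$ on Zariski dense sets of bounded-degree rational points will then follow from \cref{prop:weak-Northcott} applied to $h_{D_{+}+D_{-}}$ via \cref{auto-cano-ht}.

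For the equality of dynamical degrees: when $X$ is a smooth projective surface, $f^{*}$ is an isometry of the intersection pairing on $N^{1}(X)_{\R}$ (since $\deg f = 1$), so $(f^{-1})^{*} = (f^{*})^{-1}$ has the same spectral radius as $f^{*}$. For hyper-K\"ahler $X$, the identical argument applies with the Beauville-Bogomolov form $q$ on $N^{1}(X)_{\R}$ in place of the intersection pairing, since $f^{*}$ preserves $q$.

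Since $f^{*}$ preserves the closed, strictly convex, full-dimensional nef cone $\Nef(X) \subset N^{1}(X)_{\R}$, a Perron-Frobenius-type theorem \cite{Bir67} produces a nonzero nef class $[D_{+}]$ with $f^{*}[D_{+}] = \d_{1}(f)[D_{+}]$; applying the same argument to $f^{-1}$ yields a nonzero nef $[D_{-}]$ with $(f^{-1})^{*}[D_{-}] = \d_{1}(f)[D_{-}]$, equivalently $f^{*}[D_{-}] = \d_{1}(f)^{-1}[D_{-}]$. Smoothness of $X$ allows me to upgrade numerical equivalence to $\R$-linear equivalence (cf.\ \cite[Remark 5.11]{Sa17}), producing honest $\R$-divisor representatives satisfying the identities in the statement.

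The crux is the bigness of $D_{+}+D_{-}$. Because $f^{*}$ preserves the intersection pairing (resp.\ $q$), the identity $D_{+}^{2} = \d_{1}(f)^{2}D_{+}^{2}$ forces $D_{+}^{2}=0$ (resp.\ $q(D_{+})=0$), and likewise $D_{-}^{2}=0$ (resp.\ $q(D_{-})=0$). The classes $D_{+}$ and $D_{-}$ are non-proportional, lying in distinct eigenspaces of $f^{*}$ corresponding to the eigenvalues $\d_{1}(f) \neq \d_{1}(f)^{-1}$. The Hodge index theorem on a surface (resp.\ the signature $(1,\rho-1)$ of $q$ on $N^{1}(X)_{\R}$ for hyper-K\"ahler $X$) then yields $D_{+}\cdot D_{-}>0$ (resp.\ $q(D_{+},D_{-})>0$): two non-proportional isotropic classes in the closure of the positive cone containing the ample cone must pair strictly positively. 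For surfaces, $(D_{+}+D_{-})^{2}=2D_{+}\cdot D_{-}>0$ together with nefness gives bigness; for hyper-K\"ahler of dimension $2n$, the Fujiki relation $D^{2n}=c_{X}q(D)^{n}$ with $c_{X}>0$ converts $q(D_{+}+D_{-})=2q(D_{+},D_{-})>0$ into $(D_{+}+D_{-})^{2n}>0$, hence bigness. This positivity step is the main obstacle I anticipate, but once secured, \cref{prop:weak-Northcott} applied to the big $\R$-Cartier divisor $D_{+}+D_{-}$ together with \cref{auto-cano-ht} immediately delivers the unboundedness conclusion.
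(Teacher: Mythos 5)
Your proposal is correct and reconstructs essentially the same argument that \cite{Ka08} (surfaces) and \cite{LS21} (hyper-K\"ahler) give; the survey itself does not present a proof of this proposition, only the citation. The key chain — $f^*$ is an isometry of the intersection form (resp.\ the Beauville--Bogomolov form $q$), hence $\d_1(f^{-1})=\d_1(f)$ and the eigenvectors $D_\pm$ are isotropic; two non-proportional isotropic nef classes in a lattice of signature $(1,\rho-1)$ pair strictly positively; nef plus positive top self-intersection (via the Fujiki relation in the HK case) gives big; then the weak Northcott property (\cref{prop:weak-Northcott}) applied through \cref{auto-cano-ht} gives unboundedness — is precisely the mechanism in those papers. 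One small remark: after passing to $\R$-linear equivalence via \cite[Remark 5.11]{Sa17} the error term in \cref{auto-cano-ht} improves from $O(\sqrt{h_H^+})$ to $O(1)$, but either version suffices for the Northcott step you invoke.
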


Using the same method, Lesieutre and Satriano proved \cref{a=dconj} for
automorphisms on smooth projective varieties of Picard number two \cite[Theorem 2.30]{LS21}.

\begin{rmk}
Once you get the bigness of $D_{+} + D_{-}$, it is easy to deduce \cref{a=dconj}
from \cref{auto-cano-ht}.
Indeed, bigness of $D_{+} + D_{-}$ implies $h_{D_{+} + D_{-}} \geq C h_{H}^{+}$
on a non-empty open subset of $X$ where $C$ is a positive constant.
Therefore, for a point $x \in X(\QQ)$ with Zariski dense orbit, there is a sequence $n_{1}, n_{2},\dots$ such that
\begin{align}
 &h_{D_{+} + D_{-}} (f^{n_{j}}(x))+ O\left( \sqrt{h_{H}^{+}(f^{n_{j}}(x))} \right) \\
 &\geq h_{H}^{+}(f^{n_{j}}(x)) \left( C + O\left( \frac{1}{ \sqrt{h_{H}^{+}(f^{n_{j}}(x))}}\right)  \right)
\end{align}
and the right hand side goes to $\infty$ as $j \to \infty$.
Thus by \cref{auto-cano-ht}, 
\begin{align}
&\hh_{D_{+}, f}(f^{n_{j}}(x)) + \hh_{D_{-}, f^{-1}}(f^{n_{j}}(x)) \\
&= \d_{1}(f)^{n_{j}}\hh_{D_{+},f}(x) + \d_{1}(f^{-1})^{-n_{j}} \hh_{D_{-},f^{-1}}(x)
\end{align}
goes to infinity as $j \to \infty$.
Hence we must have $\hh_{D_{+},f}(x) > 0$ and this implies $ \alpha_{f}(x) \geq \d_{1}(f)$.
In particular, it is not necessary to find $D_{+}$ and $D_{-}$ so that
they are eigenvectors up to linear equivalence (numerical equivalence is enough).
\end{rmk}

\begin{rmk}
Jonsson and Reschke extended the construction of $\hh_{D_{+}}$ and $\hh_{D_{-}}$ to 
birational self-maps on smooth projective surfaces \cite[Theorem D]{JR18}.
But in the paper they say they were not able to prove that their canonical height is not constant zero.
By \cref{thm:cohhyp,rmk:cohhyp-genorb}, Kawaguchi-Silverman conjecture is now almost proven in this case
and we know that there are points with arithmetic degree equal to dynamical degree,
but we still do not know wether Jonsson-Reschke's canonical height is not identically zero or not (cf.\ \cite[Remark 4.11]{MW22}).
\end{rmk}

\subsection{Mori dream spaces and varieties of Fano type}

A surjective self-morphism $f \colon X \longrightarrow X$ on a normal projective variety over $\QQ$
is automatically a finite morphism and it induces a linear automorphism $f^{*}$ on $N^{1}(X)_{\R}$ by pull-back.
Moreover, $f^{*}$ induces a bijection on the nef cone $\Nef(X)$.
Therefore, if $\Nef(X)$ is generated by finitely many vectors, it is generated by finitely many extremal rays and
these extremal rays are permuted by $f^{*}$.
By replacing $f$ with $f^{n}$ for some $n \geq 1$, we may assume all the extremal rays are eigenvectors of $f^{*}$.
In particular, one of them is the leading eigenvector, i.e.\ eigenvector with eigenvalue $\d_{1}(f)$.
If we further assume all the extremal rays of $\Nef(X)$ are generated by base point free Cartier divisors, 
then we can find a nef canonical height function with nice properties.

\begin{thm}[{\cite{Ma20b}}]\label{thm:sa-nef-cone}
Let $f \colon X \longrightarrow X$ be a surjective self-morphism on a normal projective variety over a number field $K$.
Suppose 
\begin{enumerate}
\item $N^{1}(X_{\QQ})_{\Q} \simeq \Pic(X_{\QQ})_{\Q}$
\item the nef cone $\Nef(X_{\QQ})$ is generated by finitely many base point free Cartier divisors
\end{enumerate}
Then after replacing $f$ with its iterates, there is a non-zero nef and base point free divisor $D$ such that
$f^{*}D \sim \d_{1}(f)D$ and the corresponding nef canonical height function $\hh_{D,f}$ satisfies
the following property.
For any $B \in \R$ and $d \geq 1$,
\begin{align}
\left\{ x \in X(L) \ \middle|\ [L:K] \leq d, \hh_{D,f}(x) \leq B \right\}
\end{align}
is not Zariski dense.
In particular, \cref{a=dconj} is true for $f$.
\end{thm}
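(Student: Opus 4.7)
The strategy is to construct a canonical height function meeting the hypotheses of the general argument at the start of \cref{sec:canonical-height}, using the combinatorial structure that the two assumptions impose on $\Nef(X_{\QQ})$. Concretely, I aim to produce a nonzero base point free Cartier divisor $D$ with $f^{*}D \sim \d_{1}(f) D$ as an honest linear equivalence; then the Call--Silverman telescoping construction (as in \cref{thm:ksc-pol}) yields $\hh_{D,f}(x) = \lim_{n} h_{D}(f^{n}(x))/\d_{1}(f)^{n}$ with $\hh_{D,f} = h_{D} + O(1)$ and $\hh_{D,f} \circ f = \d_{1}(f)\hh_{D,f}$, and \cref{prop:weak-Northcott}(1) applied to $h_{D}$ can be transferred to $\hh_{D,f}$.

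To locate the eigendivisor, note first that $f$ is finite (since $X$ is normal projective and $f$ surjective), so $f^{*}$ is an automorphism of $N^{1}(X_{\QQ})_{\R}$ preserving $\Nef(X_{\QQ})$. By hypothesis~(2) this cone is rational polyhedral with finitely many extremal rays, each generated by a base point free Cartier divisor. Since $f^{*}$ permutes these finitely many rays, after replacing $f$ by a suitable iterate I may assume every extremal ray is fixed; each such ray is then an eigenline with a positive integer eigenvalue (integrality because $f^{*}$ preserves the integer lattice $\Pic(X_{\QQ})/\mathrm{torsion} \simeq N^{1}(X_{\QQ})/\mathrm{torsion}$, using hypothesis~(1)). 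The extremal rays span $N^{1}(X_{\QQ})_{\R}$, so $f^{*}$ is diagonalizable, its eigenvalues are exactly the ray-eigenvalues, and the spectral radius $\d_{1}(f)$ equals the eigenvalue on at least one extremal ray. Let $D$ be a base point free Cartier divisor generating such a ray; then $f^{*}D \equiv \d_{1}(f) D$ with $\d_{1}(f) \in \Z_{>0}$.

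Next I upgrade this numerical relation to linear equivalence via hypothesis~(1). The natural map $\Pic(X_{\QQ})_{\Q} \to N^{1}(X_{\QQ})_{\Q}$ being an isomorphism turns $f^{*}D \equiv \d_{1}(f) D$ into an identity in $\Pic(X_{\QQ})_{\Q}$; clearing a denominator produces a positive integer $N$ with $f^{*}(ND) \sim \d_{1}(f)(ND)$. Replacing $D$ by $ND$ preserves both nonvanishing and base point freeness, and now $f^{*}D \sim \d_{1}(f) D$ as Cartier divisors. The telescoping estimate $|\hh_{D,f} - h_{D}| \leq \sum_{k\geq 0} \d_{1}(f)^{-(k+1)} |h_{D} \circ f^{k+1} - \d_{1}(f) h_{D} \circ f^{k}| = O(1)$ (valid when $\d_{1}(f) > 1$) then delivers $\hh_{D,f}$ with the functional equation and the sharp comparison $\hh_{D,f} = h_{D} + O(1)$.

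Finally I verify the weak Northcott statement. If $\d_{1}(f) = 1$ the conclusion $\alpha_{f}(x) = \d_{1}(f)$ is immediate from \cref{thm:fund-ineq}, so assume $\d_{1}(f) > 1$. Then $D$ is numerically nontrivial, hence not torsion in $\Pic(X_{\QQ})$; combined with base point freeness this forces $\k(X_{\QQ}, D) \geq 1$, for if $\k(X_{\QQ}, D) = 0$ then $h^{0}(X_{\QQ}, mD) \leq 1$ whenever nonzero, and a base point free linear system of projective dimension zero must be the empty divisor, contradicting $mD \not\sim 0$. Hence \cref{prop:weak-Northcott}(1) applies to $h_{D}$, and since $\hh_{D,f} - h_{D}$ is bounded the same conclusion holds for $\hh_{D,f}$ with arbitrary $B$ and $d$. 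Feeding $\hh_{D,f}$ into the opening argument of \cref{sec:canonical-height} then yields \cref{a=dconj} for $f$. The main obstacle is the numerical-to-linear passage: hypothesis~(2) only furnishes the extremal eigendivisor up to numerical equivalence, and without hypothesis~(1) one would be stuck with $\hh_{D,f} = h_{D} + O(\sqrt{h_{H}^{+}})$, which is too weak to transfer the weak Northcott property for $h_{D}$ to $\hh_{D,f}$ at arbitrary levels $B$.
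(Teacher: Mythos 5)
Your proposal is correct and follows essentially the same route as the paper: after passing to an iterate, the finitely many extremal rays of $\Nef(X_{\QQ})$ are fixed by $f^{*}$, each is an eigenline with a positive integer eigenvalue, one of these achieves the spectral radius $\d_{1}(f)$, hypothesis~(1) upgrades the numerical relation to linear equivalence so the Call--Silverman telescoping gives $\hh_{D,f} = h_{D} + O(1)$, and the observation that a numerically nontrivial base point free divisor has $\k(X,D)\geq 1$ lets \cref{prop:weak-Northcott}(1) be transferred from $h_{D}$ to $\hh_{D,f}$. This is the same decomposition of the argument used in \cite{Ma20b}.
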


The assumptions in the theorem are well-studied in algebraic geometry
and there are many varieties that satisfy them.

\begin{cor}\label{cor:mds}
\cref{a=dconj} is true for surjective self-morphism on
Mori dream spaces, varieties of Fano type, and projective toric varieties.
\end{cor}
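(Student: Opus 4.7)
The plan is to derive \cref{cor:mds} as an immediate consequence of \cref{thm:sa-nef-cone} by verifying its two hypotheses in each of the three listed classes. Given a surjective self-morphism $f \colon X \longrightarrow X$, it suffices to check that
\begin{enumerate}
\item $N^{1}(X_{\QQ})_{\Q} \simeq \Pic(X_{\QQ})_{\Q}$, and
\item the nef cone $\Nef(X_{\QQ})$ is rational polyhedral with extremal rays generated by base-point-free Cartier divisors;
\end{enumerate}
once (1) and (2) hold, \cref{thm:sa-nef-cone} yields \cref{a=dconj} directly. So the entire argument reduces to cone-theoretic bookkeeping.

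For a Mori dream space in the sense of Hu--Keel, both conditions are essentially built into the definition. Such an $X$ is $\Q$-factorial with finitely generated Picard group, and one has $N^{1}(X)_{\Q} = \Pic(X)_{\Q}$ by definition, giving (1). For (2), the nef cone of an MDS is rational polyhedral and, crucially, every nef divisor on an MDS is semiample; hence each extremal ray of $\Nef(X_{\QQ})$ is spanned by a semiample Cartier divisor, and replacing that divisor with a sufficiently divisible positive multiple yields a base-point-free generator of the same ray, establishing (2). The remaining two classes are then reduced to the MDS case: varieties of Fano type (in the standard sense of admitting a klt boundary $\Delta$ with $-(K_{X}+\Delta)$ ample) are Mori dream spaces by the finite generation results of Birkar--Cascini--Hacon--McKernan; projective toric varieties are the prototypical Mori dream spaces, as is classical from the Cox construction.

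The verification is essentially a matter of citing the right structural results, so there is little genuine obstruction. The only delicate points are conventional: one must ensure that the definition of MDS used (Hu--Keel, requiring $\Q$-factoriality and the identification of $N^{1}_{\Q}$ with $\Pic_{\Q}$) matches what \cref{thm:sa-nef-cone} demands, and one must recall that nef divisors on an MDS are actually semiample (not merely numerically eigendivisors in a finite polyhedron), since base-point-freeness, not just finite generation of $\Nef$, is what condition (2) requires. With those conventions fixed, no new dynamical input beyond \cref{thm:sa-nef-cone} is necessary.
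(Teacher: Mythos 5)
Your proof takes essentially the paper's approach: reduce \cref{cor:mds} to \cref{thm:sa-nef-cone} by checking its two cone-theoretic hypotheses in each class. The one place where your route genuinely diverges is the Fano-type and toric cases. You reduce both to the MDS case (Fano type via BCHM, toric via Cox), whereas the paper verifies the two hypotheses of \cref{thm:sa-nef-cone} directly: for varieties of Fano type it invokes the cone theorem (finite rational polyhedral nef cone) and the base point free theorem (nef divisors semiample), and for projective toric varieties it cites the corresponding facts from Cox--Little--Schenck. The practical difference is that the Hu--Keel definition of MDS you lean on builds in $\Q$-factoriality, while the list earlier in the paper explicitly allows ``not necessarily $\Q$-factorial'' varieties of Fano type; the cone theorem and base point free theorem hold without a $\Q$-factoriality hypothesis, so the paper's direct verification covers that case whereas your reduction to MDS would not. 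Aside from that edge case, the two arguments carry the same content and the same citations, just packaged differently.
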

\begin{proof}
The conditions in \cref{thm:sa-nef-cone} are actually parts of the definition of Mori dream spaces.
We can check that varieties of Fano type satisfy the conditions by basic theorems in minimal model theory
(base point free theorem and cone theorem).
For projective toric varieties, the conditions follow from
\cite[Theorem 6.3.12, Theorem 6.3.15, Theorem 6.3.20]{CLS}.
\end{proof}

The key point in \cref{thm:sa-nef-cone}, as well as polarized case, is that we can find 
a good eigendivisor for $f^{*}$.
In \cite{Na20}, Nasserden explored what conditions on eigenspaces of $f^{*}$ would be sufficient for 
proving Kawaguchi-Silverman conjecture.

\subsection{Regular affine automorphisms}\label{subsec:regular-affine-auto}

When our self-map is a rational map and not a morphism,
it is usually very difficult to construct canonical height function with nice properties.
There is an early work by Kawaguchi on regular affine automorphisms that can be understood
as a generalization of the strategy in \cref{auto-surf-hk}.

\begin{defn}
An automorphism $f \colon \A^{N} \longrightarrow \A^{N}$ over $\QQ$
is called regular affine automorphism (regular polynomial automorphism) if
the indeterminacy loci of $f$ and $f^{-1}$ considered as birational maps on $\P^{N}$ are disjoint.
\end{defn}

Regular affine automorphism is algebraically stable when it is considered as a rational self-map 
on the projective space $\P^{N}$ via the natural embedding $\A^{N} \subset \P^{N}$
(i.e.\ $(f^{n})^{*} = (f^{*})^{n} \colon \Pic \P^{N} \longrightarrow \Pic \P^{N}$, cf.\ for example \cite[Lemma 19]{KS14}).

\begin{thm}[{\cite{Ka13}}]\label{thm:reg-affine-auto}
Let $f \colon \A^{N}_{\QQ} \longrightarrow \A^{N}_{\QQ}$ be a regular affine automorphism 
with degree $d \geq 2$ (i.e.\ $d$ is the max of the degrees of the coordinate polynomials).
Then $d = \d_{1}(f)$ since $f$ is algebraically stable as a self-map on $\P^{N}_{\QQ}$.
Let $d_{-}$ be the degree of $f^{-1}$ (note that $d_{-} = \d_{1}(f^{-1}) = \d_{N-1}(f) \geq 2$).
Let $h \colon \A^{N}(\QQ) \longrightarrow \R$ be the naive logarithmic height function (defined by embedding $\A^{N}_{\QQ} \subset \P^{N}_{\QQ}$).
Then the limits
\begin{align}
&\hh_{f}^{+} (x) = \lim_{\n \to \infty} \frac{h(f^{n}(x))}{d^{n}} \ \ x \in \A^{N}(\QQ)\\
&\hh_{f}^{-} (x) = \lim_{\n \to \infty} \frac{h(f^{-n}(x))}{d_{-}^{n}} \ \ x \in \A^{N}(\QQ)
\end{align}
exists and $\hh_{f}^{+} + \hh_{f}^{-} \gg \ll h + O(1)$ (implicit constants depend at most on $f$).
In particular, \cref{a=dconj} is true for $f$.
\end{thm}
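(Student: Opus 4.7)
The plan is to construct two canonical heights $\hh_{f}^{+}$ and $\hh_{f}^{-}$ associated with $f$ and $f^{-1}$, in the spirit of the two-height strategy of \cref{auto-surf-hk}, but with the bigness-of-$D_{+}+D_{-}$ step replaced by a direct height inequality derived from the regularity hypothesis $I_{f} \cap I_{f^{-1}} = \emptyset$ inside $\P^{N}$. Viewing $f$ and $f^{-1}$ as rational self-maps on $\P^{N}_{\QQ}$ via $\A^{N} \subset \P^{N}$, algebraic stability yields $f^{*}\O_{\P^{N}}(1) \sim d\,\O_{\P^{N}}(1)$ and $(f^{-1})^{*}\O_{\P^{N}}(1) \sim d_{-}\,\O_{\P^{N}}(1)$, while the naive affine height $h$ agrees with a $\P^{N}$-height up to a bounded function on $\A^{N}(\QQ)$. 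The Weil machine then delivers the trivial upper bounds $h(f(P)) \leq d\, h(P) + O(1)$ and $h(f^{-1}(P)) \leq d_{-} h(P) + O(1)$.

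The crux of the argument is to establish a reverse inequality of the shape
\begin{align}
d_{-}\, h(f(P)) + d\, h(f^{-1}(P)) \geq (d d_{-} + 1)\, h(P) - C
\end{align}
(or a morally equivalent asymmetric variant with constants depending on $d, d_{-}$) valid for all $P \in \A^{N}(\QQ)$. This is where the regularity hypothesis is essential: the disjointness of $I_{f}$ and $I_{f^{-1}}$ inside $\P^{N}$ means the homogeneous ideals cutting out these loci have no common zero, and via an effective Nullstellensatz this translates into a positivity statement for the divisor class $d_{-} f^{*}H + d (f^{-1})^{*}H - (d d_{-} + 1) H$, from which the above height inequality follows through the Weil height machine. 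I expect this step to be the main obstacle of the proof, since it is the only place the regularity assumption enters substantively; without it no such lower bound holds and the whole strategy collapses.

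Once both bounds are in hand, a Tate-type telescoping argument shows that the limits
\begin{align}
\hh_{f}^{+}(x) = \lim_{n \to \infty} \frac{h(f^{n}(x))}{d^{n}}, \qquad \hh_{f}^{-}(x) = \lim_{n \to \infty} \frac{h(f^{-n}(x))}{d_{-}^{n}}
\end{align}
exist: the upper bounds keep the sequences bounded, while iterating the reverse inequality along the orbit controls the successive differences by a geometrically decaying error. The functional equations $\hh_{f}^{+} \circ f = d\, \hh_{f}^{+}$ and $\hh_{f}^{-} \circ f^{-1} = d_{-}\, \hh_{f}^{-}$ are immediate from the defining limits, and summing the upper and reverse inequalities produces the claimed equivalence $\hh_{f}^{+} + \hh_{f}^{-} \gg \ll h + O(1)$ on $\A^{N}(\QQ)$.

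To deduce \cref{a=dconj}, fix $x \in \A^{N}(\QQ)$ with Zariski-dense forward orbit, defined over a number field $L$. If $\hh_{f}^{+}(x) = 0$, then $\hh_{f}^{+}(f^{n}(x)) = 0$ for all $n$, so the equivalence gives $\hh_{f}^{-}(f^{n}(x)) \geq c\, h(f^{n}(x)) - C'$ for some $c > 0$; but $\hh_{f}^{-}(f^{n}(x)) = d_{-}^{-n} \hh_{f}^{-}(x) \to 0$, so $h(f^{n}(x))$ would be bounded, and Northcott confines the $L$-rational orbit to a finite set, contradicting density. Hence $\hh_{f}^{+}(x) > 0$, and combining the upper comparison $\hh_{f}^{+} \leq c' h + O(1)$ with $\hh_{f}^{+}(f^{n}(x)) = d^{n} \hh_{f}^{+}(x)$ yields $h(f^{n}(x)) \gg d^{n}$, so $\underline{\alpha}_{f}(x) \geq d = \d_{1}(f)$. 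Together with \cref{thm:fund-ineq} this gives $\alpha_{f}(x) = \d_{1}(f)$, completing the proof.
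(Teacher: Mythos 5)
Your proposal is correct and follows essentially the same strategy as Kawaguchi's cited proof \cite{Ka13}: one-sided bounds from algebraic stability, the crucial reverse inequality $d_{-}\,h(f(P)) + d\,h(f^{-1}(P)) \geq (d d_{-}+1)\,h(P) - C$ forced by the disjointness of $I_{f}$ and $I_{f^{-1}}$ in $\P^{N}$, a Tate telescoping, and the Northcott argument for dense orbits. One small inaccuracy: the existence of the limits $\hh_{f}^{\pm}$ already follows from the one-sided bound $h(f(P)) \leq d\,h(P) + C$ together with $h \geq 0$ on $\A^{N}(\QQ)$, since the sequence $h(f^{n}(P))/d^{n} + C/((d-1)d^{n})$ is then non-increasing and bounded below; the reverse inequality is not needed for convergence but only (and essentially) for the lower comparison $\hh_{f}^{+} + \hh_{f}^{-} \gg h - O(1)$, which you do use correctly in the Northcott step.
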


\begin{rmk}\label{rmk:ksc-A2}
In \cite{Ka06}, Kawaguchi proved two dimensional version of \cref{thm:reg-affine-auto} for
all automorphisms on $\A^{2}_{\QQ}$ with dynamical degree $>1$.
We remark that, every automorphism $f \colon \A^{2}_{\QQ} \longrightarrow \A^{2}_{\QQ}$
with $\d_{1}(f) > 1$ is conjugate to a regular affine automorphism (more precisely, composite of H\'enon maps) cf.\ \cite[Theorem 3.1, Proposition 3.2]{Ka06}.
Here conjugate means conjugate by an automorphism on $\A^{2}_{\QQ}$.
Therefore, \cref{thm:reg-affine-auto} implies \cref{a=dconj} for all automorphisms on $\A^{2}_{\QQ}$.
\end{rmk}

\subsection{Self-morphisms on $\A^{2}_{\QQ}$}

Jonsson and Wulcan studied the case for self-morphisms on affine place $\A^{2}_{\QQ}$.
They use some results about equivariant compactification of self-morphisms on $\A^{2}_{\QQ}$,
which is based on prior works by Favre and Jonsson on valuative trees.

\begin{thm}[{\cite[Main Theorem]{JW12}}]\label{thm:endo-on-A2-JW}
Let $f \colon \A^{2}_{\QQ} \longrightarrow \A^{2}_{\QQ}$ be a dominant self-morphism
with $\d_{1}(f) > \d_{2}(f)$ ($f$ is said to have small topological degree).
Let $h \colon \A^{2}(\QQ) \longrightarrow \R$ be the naive logarithmic height function (defined by embedding $\A^{2} \subset \P^{2}$).
Then the limit 
\begin{align}
\hh(x) = \lim_{n \to \infty} \frac{h(f^{n}(x))}{\d_{1}(f)^{n}}
\end{align}
exists for all $x \in \A^{2}(\QQ)$.
Further, $\hh$ is not constant $0$ and $\hh(x) = 0$ implies 
\begin{align}
\overline{\alpha}_{f}(x) = \limsup_{n \to \infty} h(f^{n}(x))^{1/n} \leq \d_{2}(f) < \d_{1}(f).
\end{align}
Moreover, if $f$ is an automorphism, then $\hh(x) = 0$ if and only if $x$ is $f$-periodic.
In particular, \cref{a=dconj} holds for automorphisms on $\A^{2}_{\QQ}$.
\end{thm}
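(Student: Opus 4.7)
The plan is to construct a canonical height function from a Perron eigenclass on a carefully chosen equivariant compactification of $\A^2$, and then extract the arithmetic consequences from a spectral decomposition whose subdominant part is governed by the second dynamical degree.

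The geometric input is Favre-Jonsson's analysis of the action of $f$ on the valuative tree at infinity. It produces a smooth projective birational model $\pi \colon X \to \P^2$, isomorphic to $\A^2$ over the affine chart, on which the induced rational map $\tilde f \colon X \dashrightarrow X$ is algebraically stable (so $(\tilde f^n)^* = (\tilde f^*)^n$ on $N^1(X)_\R$), together with a nef $\R$-Cartier class $\theta^+$ satisfying $\tilde f^* \theta^+ \equiv \d_1(f) \theta^+$. The hypothesis $\d_1(f) > \d_2(f)$ ensures $\d_1(f)$ is a simple dominant eigenvalue of $\tilde f^*$ and that on a complementary $\tilde f^*$-invariant subspace the spectral radius is at most $\d_2(f)$. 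I then define the Perron canonical height $\hh_{\theta^+}(x) := \lim_n \d_1(f)^{-n} h_{\theta^+}(f^n(x))$; convergence is a telescoping argument using $h_{\theta^+} \circ \tilde f = \d_1(f) h_{\theta^+} + O(\sqrt{h^+})$ (the error arising from numerical rather than linear equivalence) together with summability of $\sum_n \d_1(f)^{-n} \sqrt{h^+(f^n(x))}$, which follows from \cref{thm:fund-ineq}. By construction $\hh_{\theta^+} \circ f = \d_1(f) \hh_{\theta^+}$ and $\hh_{\theta^+} = h_{\theta^+} + O(\sqrt{h^+})$.

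Decomposing the pulled-back hyperplane class on $X$ as $L = c \theta^+ + R$ with $R$ in the complementary $\tilde f^*$-invariant subspace yields
\begin{align}
h(f^n(x)) = c \d_1(f)^n \hh_{\theta^+}(x) + O(\d_2(f)^n n^k h^+(x)^{1/2}) + o(\d_1(f)^n),
\end{align}
so the limit defining $\hh(x)$ exists and equals $c \hh_{\theta^+}(x)$, and $\hh(x) = 0 \Rightarrow \overline{\alpha}_f(x) \leq \d_2(f) < \d_1(f)$ is immediate. Non-vanishing $\hh \not\equiv 0$ is the delicate step: one argues that $\theta^+$ has positive self-intersection on $X$ (a consequence of $\d_1(f^2) = \d_1(f)^2$ together with Hodge-type inequalities available on the chosen model), so $\theta^+$ combined with a complementary nef eigenclass is big, and the canonical height then dominates an ample height on a Zariski open subset of $X(\QQ)$ of bounded degree. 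For an automorphism $\d_2(f) = 1$, hence $\d_1(f^{-1}) = \d_1(f)$, and the symmetric construction applied to $f^{-1}$ gives $\hh^-$; a point $x$ with $\hh(x) = \hh^-(x) = 0$ has the two-sided orbit $\{f^n(x) \mid n \in \Z\}$ of uniformly bounded naive height, hence finite by Northcott, hence periodic, with the converse immediate. The KSC statement then follows: a non-periodic $x$ has at least one of $\hh(x), \hh^-(x)$ positive, forcing $h(f^n(x)) \geq \mathrm{const} \cdot \d_1(f)^n$ for large $n$ (in either time direction) and therefore $\alpha_f(x) = \d_1(f)$.

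The main obstacle is the algebraic-stability and spectral-gap step. On $\P^2$ itself the map is typically not algebraically stable, and one must construct a specific blow-up dictated by the combinatorics of the action of $f$ on the Riemann-Zariski valuation space at infinity, then verify both the existence of the Perron eigenvector $\theta^+$ in the nef cone of this model and the spectral bound $\d_2(f)$ on its complement. This geometric input is exactly the content of Favre-Jonsson's valuative-tree theory; once it is in place, the height-theoretic arguments above are relatively routine.
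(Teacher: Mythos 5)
The paper does not give a proof of this theorem; it is a survey and simply cites the result to Jonsson--Wulcan \cite{JW12}, so there is no ``paper's own proof'' to compare against. Evaluating your proposal on its own merits, there is a concrete and fatal error at exactly the step you flag as ``the delicate step.''

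You claim non-vanishing of $\hh$ by arguing that the Perron eigenclass $\theta^{+}$ has positive self-intersection on the chosen model. This is false, and the falsity is the very reason the problem is hard. On any smooth projective surface carrying an algebraically stable lift of $f$ with $\tilde f^{*}\theta^{+} \equiv \d_{1}(f)\theta^{+}$, one has $(\tilde f^{*}\theta^{+})^{2} = \d_{1}(f)^{2}(\theta^{+})^{2}$, while pull-back scales intersection numbers by the topological degree, giving $(\tilde f^{*}\theta^{+})^{2} = \d_{2}(f)(\theta^{+})^{2}$ (for a morphism; the algebraically stable rational case gives the same conclusion). Since $\d_{1}(f) > \d_{2}(f) \geq 1$ forces $\d_{1}(f)^{2} > \d_{2}(f)$, we are forced into $(\theta^{+})^{2} = 0$. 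So $\theta^{+}$ is nef of zero self-intersection, hence not big, and your argument that the canonical height ``dominates an ample height on a Zariski open subset'' collapses. In fact the theorem itself tells you that such domination cannot hold in general: it only asserts $\hh \not\equiv 0$ and that $\hh(x)=0$ forces $\overline{\alpha}_{f}(x) \leq \d_{2}(f)$, leaving open that $\hh$ vanishes on a Zariski dense set of rational points of bounded degree (which can indeed happen when $\d_{2}(f) > 1$). Your argument, if correct, would have established \cref{a=dconj} for all small-topological-degree endomorphisms of $\A^{2}$, whereas the theorem (and \cite{JW12}) only derive it for automorphisms; this mismatch is itself a signal the non-vanishing step is overreaching. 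The actual Jonsson--Wulcan proof of $\hh \not\equiv 0$ is a local analysis at the eigenvaluation at infinity in the valuative tree and does not proceed through bigness of $\theta^{+}$. Your treatment of the automorphism case via $\hh + \hh^{-}$ and Northcott is sound (this is Kawaguchi's mechanism), and the telescoping construction of $\hh_{\theta^{+}}$ and the spectral decomposition of the hyperplane class are plausible modulo the caveat that the eigenclass and stability are more naturally formulated in terms of b-divisors over the full tower of blow-ups rather than on a single smooth model.
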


We remark that \cref{a=dconj} for automorphisms on $\A^{2}_{\QQ}$ was already proven by Kawaguchi in \cite{Ka06} (cf.\ \cref{rmk:ksc-A2}).
But Jonsson and Wulcan's proof is different and has the following extension.

\begin{rmk}\label{rmk:JWX}
There is an unpublished work by Jonsson-Wulcan-Xie on Kawaguchi-Silverman conjecture for self-morphisms on $\A^{2}$.
For a dominant self-morphism $f \colon \A^{2}_{\QQ} \longrightarrow \A^{2}_{\QQ}$, if $f$ satisfies one of the following conditions,
then \cref{a=dconj} holds for $f$:
\begin{enumerate}
\item $\d_{1}(f) \geq \d_{2}(f)$;
\item $\d_{1}(f) \notin \Z$;
\item $\d_{1}(f) \in \Z$ and $\gcd(\d_{1}(f), \d_{2}(f)) \neq 1$.
\end{enumerate}
Also, they proved that Vojta's conjecture implies Kawaguchi-Silverman conjecture for $f$.
I appreciate them sharing this result with me and allowing me to include it here.
\end{rmk}

Let us mention that there is another relevant result \cref{thm:wang,rmk:jwx-vs-wang},
which can be understood as a generalization of Kawaguchi's approach.

\section{Minimal Model Program and Projective geometry}\label{sec:mmp-proj-geom}

Recently, there have been many advancements in the study of the Kawaguchi-Silverman conjecture 
that use techniques from projective geometry and the minimal model program. 
The fundamental idea is as follows: 
Suppose we have a surjective self-morphism $f \colon X \longrightarrow X$ on a certain projective variety $X$. 
If we can find another projective variety $Y$ that is birational to $X$ 
and is geometrically well understood (e.g.\ $\P^{N}$, an abelian variety, or a product of them), 
then we could deduce the Kawaguchi-Silverman conjecture for $f$ from that of the induced map $f_{Y}$ on $Y$.
However, the challenge lies in the fact that even if $Y$ is a well-understood variety, 
the Kawaguchi-Silverman conjecture for $Y$ is very difficult if $f_{Y}$ is a rational map. 
Therefore, the crucial aspect of this strategy is to construct $Y$ in such a way that $f_{Y}$ is a morphism.

\subsection{Varieties admitting an int-amplified endomorphism}

For a projective variety, the presence of certain special types of surjective self-morphisms, 
such as polarized self-morphisms, generally imposes a very strong condition. 
There have been efforts to classify all such varieties and to understand the structure 
of these self-morphisms. 
In this context, Meng introduced the following class of surjective self-morphisms on projective varieties:

\begin{defn}[\cite{Me20}]
Let $f \colon X \longrightarrow X$ be a surjective self-morphism on a projective variety defined over an algebraically closed field.
Then $f$ is said to be int-amplified if there is an ample Cartier divisor $H$ such that
$f^{*}H - H$ is ample.
\end{defn}

\begin{rmk}
Polarized endomorphisms are int-amplified. 
Int-amplifiedness is characterized by the following property:
the modulus of all the eigenvalues of $f^{*} \colon N^{1}(X) \longrightarrow N^{1}(X)$ is greater than $1$ \cite[Theorem 3.3]{Me20}. 
Contrast to polarized case, being int-amplified is preserved under taking products. 
For additional properties of int-amplified endomorphisms, see Section 3 in \cite{Me20}.
\end{rmk}

Int-amplified endomorphism was first introduced in the study of equivariant minimal model program
as described below, but the notion seems to be useful in other context, e.g.\ \cite{KT23}.
Also int-amplifiedness can be understood as cohomological hyperbolicity (cf.\ \cref{def:coh-hyp}):
$f \colon X \longrightarrow X$ is int-amplified if and only if $\d_{\dim X -1}(f) < \d_{\dim X}(f) = \deg f$
(at least when $X$ is smooth), i.e.\ $f$ is $\dim X$-cohomologically hyperbolic.


Minimal model theory provides us strong tools to study the structure of algebraic varieties.
The basic idea it to perform a series of special type of transformations that simplify the variety.
Meng and Zhang proved that if the original variety admits an int-amplified endomorphism,
then all the process in MMP can be made equivariant under any surjective self-morphisms up to iterates.
The following is a part of the consequence of their results (\cite[Theorem 1.2]{MZ20}, \cite[Theorem 1.8]{MZ18},\cite[Theorem 1.10]{Me20}).

\begin{thm}[Equivariant MMP (Meng-Zhang)]\label{equiMMP}
Let $X$ be a $\Q$-factorial klt projective variety over $\QQ$ admitting an int-amplified endomorphism.
Then for any surjective self-morphism $f \colon X \longrightarrow X$, there exists a positive integer $n>0$ and 
the following commutative diagram of rational maps
\[
\xymatrix{
*+[l]{X= X_{0}}  \ar@{-->}[r]^{\pi_{0}} \ar[d]_{g_{0}=f^{n}} & X_{1} \ar[d]_{g_{1}}  \ar@{-->}[r]^{\pi_{1}} & \cdots \ar@{-->}[r]  
&  X_{r-1} \ar@{-->}[r]^{\pi_{r-1}} \ar[d]_{g_{r-1}} & X_{r} \ar[d]^{g_{r}}  & \ar[l]_{\nu} A \ar[d]^{h}\\
 X_{0} \ar@{-->}[r]_{\pi_{0}} & X_{1} \ar@{-->}[r]_{\pi_{1}}  & \cdots \ar@{-->}[r] & X_{r-1} \ar@{-->}[r]_{\pi_{r-1}} & X_{r}  & \ar[l]^{\nu} A 
}
\]
such that
\begin{enumerate}
\item $\pi_{i} \colon X_{i} \dashrightarrow X_{i+1}$ is either the divisorial contraction, flip, or Fano contraction of a 
$K_{X_{i}}$-negative extremal ray;

\item $A$ is an abelian variety  and $\nu$ is a finite morphism that is \'etale in codimension one, i.e.\ quasi-\'etale morphism (Note that $A$ could be a point);
\item $g_{i} \colon X_{i} \longrightarrow X_{i}$ for $i=0, \dots ,r$ and $h \colon A \longrightarrow A$ are surjective self-morphisms.

\end{enumerate}
\end{thm}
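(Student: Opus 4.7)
The plan is to run the minimal model program on $X$ equivariantly with respect to (an iterate of) $f$, and then identify the terminal MMP output with a variety admitting a quasi-\'etale cover by an abelian variety.

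First, I would exploit the structural consequences of admitting an int-amplified endomorphism. By the characterization recalled in the remark following the definition, the existence of such an endomorphism is equivalent to all eigenvalues of $f^{*}$ on $N^{1}(X)_{\R}$ having modulus strictly greater than $1$; dually, the induced action on the Mori cone $\overline{\mathrm{NE}}(X)$ is expanding. Combined with the cone theorem for $\Q$-factorial klt varieties, this forces $f_{*}$ to permute the $K_{X}$-negative extremal rays, and more importantly, to permute the finite set of such rays of ``minimal length'' (or any finite subset picked out by a discrete invariant such as the class of the contraction). After replacing $f$ by an iterate $g_{0} = f^{n}$, we can arrange that a designated extremal ray $R_{0}$ is individually stabilized rather than merely permuted.

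Next, I would contract or flip $R_{0}$ to produce $\pi_{0} \colon X_{0} \dashrightarrow X_{1}$. Because $R_{0}$ is $g_{0}$-invariant, the universal property of divisorial contractions, of Fano contractions, and the uniqueness of flips furnish an induced surjective self-morphism $g_{1} \colon X_{1} \longrightarrow X_{1}$ making the square commute. The key technical point is then to check that $g_{1}$ remains int-amplified, so the process can be iterated; this uses the descent of int-amplifiedness through each type of MMP operation, which follows by comparing the eigenvalue spectra of $g_{0}^{*}$ and $g_{1}^{*}$ on the Picard groups and noting that only invariant directions are killed. Termination of flips for klt pairs guarantees that after finitely many steps one reaches $X_{r}$ on which no further $K_{X_{r}}$-negative extremal contraction is possible, so $K_{X_{r}}$ is nef. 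Under the constraint of admitting an int-amplified endomorphism, abundance-type arguments and the expanding nature of $g_{r}^{*}$ force $K_{X_{r}} \equiv 0$.

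The deepest step is identifying $(X_{r}, g_{r})$ with a quasi-\'etale quotient of an abelian variety. Here I would invoke the Beauville--Bogomolov type decomposition for $\Q$-factorial klt varieties with numerically trivial canonical class, which, after passing to a quasi-\'etale cover, splits the variety into abelian, strict Calabi-Yau, and irreducible symplectic factors. The crucial observation is that the Calabi-Yau and irreducible symplectic factors admit no int-amplified endomorphism, since the induced action on their (rank one or hyperk\"ahler) Picard pieces necessarily has spectral radius equal to $1$. Hence the presence of an int-amplified $g_{r}$ eliminates all non-abelian factors, producing the finite quasi-\'etale morphism $\nu \colon A \longrightarrow X_{r}$ from an abelian variety (or a point), and a further iteration allows $g_{r}$ to be lifted to $h \colon A \longrightarrow A$ via functoriality of the \'etale fundamental group and the Albanese.

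The main obstacle is twofold. First, one must choose a single integer $n$ that works uniformly so that $g_{0} = f^{n}$ makes the entire diagram commute, rather than having to iterate at each step; this requires a uniform bound on how many iterations are needed to stabilize each chosen extremal ray, plus a careful verification that int-amplifiedness is preserved along every $\pi_{i}$. Second, the structure-theoretic identification of $X_{r}$ as $\Q$-abelian rests on rigidity results for endomorphism monoids of Calabi-Yau and symplectic varieties, which is the genuinely non-trivial geometric input underlying the Meng-Zhang theorem.
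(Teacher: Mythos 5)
This theorem is quoted in the survey without a proof: the paper simply cites \cite[Theorem 1.2]{MZ20}, \cite[Theorem 1.8]{MZ18}, and \cite[Theorem 1.10]{Me20}. There is therefore no in-paper argument to compare your sketch against, so the review below addresses the internal soundness of your proposal and where it departs from what Meng--Zhang are known to do.

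Your overall strategy is the right one (run an $f$-equivariant MMP and show the end product is $\Q$-abelian), but there are two genuine gaps. The first and most serious is your appeal to ``termination of flips for klt pairs.'' This is not a theorem; it is an open conjecture in dimension $\geq 4$. Meng--Zhang cannot and do not invoke it. Their termination argument is specific to the situation: divisorial contractions drop the Picard number, Fano contractions drop the dimension (note that the theorem's diagram continues after Fano contractions rather than stopping at a Mori fiber space), and the control over flips comes from the int-amplified structure combined with running a suitably chosen (e.g.\ scaled) MMP whose termination is known by BCHM-type results in this uniruled setting, not from general flip termination. Any write-up that leans on ``termination of flips for klt pairs'' as an established fact is incomplete. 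A related smaller issue is your claim that one can stabilize a single extremal ray by passing to an iterate because $f_{*}$ ``permutes'' the $K_{X}$-negative rays: the cone theorem only guarantees discreteness of these rays away from $K_{X}^{\perp}$, not finiteness of the whole set, so the existence of an invariant ray after iteration needs a genuine argument (which again uses int-amplifiedness).

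The second departure concerns the final identification of $X_{r}$. You invoke the Beauville--Bogomolov decomposition for klt varieties with $K\equiv 0$ and then rule out Calabi--Yau and irreducible symplectic factors. This is a plausible route given the modern decomposition theorems (Druel, H\"oring--Peternell, Greb--Guenancia--Kebekus), and your reason for excluding the non-abelian factors is essentially correct: on those factors the ramification divisor of any endomorphism vanishes, so the map is quasi-\'etale, and (essential) simple connectedness forces it to be an automorphism, which can never be int-amplified since $f^{*}$ then has determinant $\pm 1$ on $N^{1}$. However, the historical Meng--Zhang argument (building on Nakayama--Zhang) does not route through the decomposition theorem; it proceeds more directly from the observations that int-amplifiedness plus $K_{X}$ pseudo-effective forces $K_{X}\equiv 0$, that the endomorphism is then quasi-\'etale, and that this plus klt-ness implies $X_{r}$ is $\Q$-abelian via a covering/Albanese argument. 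The phrase ``abundance-type arguments'' is also misleading for the step $K_{X_{r}}\equiv 0$: this is a Perron--Frobenius/ramification-formula argument ($K_{X}=f^{*}K_{X}+R_{f}$ with $R_{f}$ effective and $f^{*}$ strictly expanding), not abundance. In short, your outline captures the skeleton of the result, but the termination step as written would fail, and the final step, while defensible with modern inputs, is not the route the cited sources take.
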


In \cite{MZ22}, Meng and Zhang studied a strategy of proving Kawaguchi-Silverman conjecture and
extracted the following special case called Case TIR as the obstruction to their strategy.

\begin{defn}[{\bf Case ${\rm TIR}_{n}$ --Totally Invariant Ramification case--}]\label{CaseTIR}
Let $X$ be a normal projective variety of dimension $n \geq 1$, 
which has only $\Q$-factorial klt singularities and admits an int-amplified endomorphism. 
Let $f : X \longrightarrow X$ be an arbitrary surjective self-morphism. Moreover, we impose the following conditions.

\begin{enumerate}
\item[(A1)] 
The anti-Kodaira dimension is zero: $\kappa(X, -K_{X} ) = 0$, and
$-K_{X}$ is nef, whose class is extremal in both the nef cone $\Nef(X)$ and the pseudo-effective divisors cone $ \overline{\Eff}(X)$.

\item[(A2)] There is a prime divisor $D$ such that $f^*D = \d_1(f) D$ and $D \sim_{\Q}-K_{X}$.
(Here the equality is an equality as divisors. In particular, $\d_{1}(f)$ is an integer.)

\item[(A3)] The ramification divisor of $f$ satisfies $\Supp R_f = D$.

\item[(A4)] There is an $f$-equivariant Fano contraction $\pi : X \longrightarrow Y$ with $\d_{1}(f) > \d_{1}(f_{Y}) (\geq 1)$,
where $f_{Y}$ is the induced surjective self-morphism on $Y$.

\item[(A5)] $\dim(X) \geq \dim(Y) + 2 \geq 3$.
\end{enumerate}
\end{defn}

\begin{thm}[{\cite[Theorem 1.7]{MZ22}}]
Let $X$ be a $\Q$-factorial klt projective variety over $\QQ$ admitting an int-amplified endomorphism.
\begin{enumerate}
\item
If $K_{X}$ is pseudo-effective, then \cref{a=dconj} is true for all surjective self-morphisms on $X$.
Note that in this case $X$ is an Q-abelian variety (i.e.\ there is a finite surjective morphism from an abelian variety
that is \'etale in codimension one).

\item
Assume \cref{a=dconj} is true for Case ${\rm TIR}_{n}$ for $n \leq \dim X$ (it is enough to assume for the fibrations appearing in an equivariant MMP starting from X).
Then \cref{a=dconj} is true for all surjective self-morphisms on $X$.

\end{enumerate}
\end{thm}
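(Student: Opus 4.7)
The plan is to prove (1) by invoking the Meng-Zhang structure theory (which forces $X$ to be $\Q$-abelian when $K_X$ is pseudo-effective) and then to reduce (2) to (1) plus Case TIR by running equivariant MMP and inducting on dimension, with Case TIR serving as the terminal obstruction isolated by the authors.

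For (1), I would invoke the structure result announced in the statement itself: a $\Q$-factorial klt projective variety admitting an int-amplified endomorphism with $K_X$ pseudo-effective is $\Q$-abelian, so there is a finite quasi-\'etale morphism $\nu\colon A \to X$ from an abelian variety $A$. Given any surjective $f\colon X \to X$, after replacing $f$ by a sufficiently divisible iterate and possibly replacing $A$ by a Galois closure of $\nu$, standard covering theory produces a surjective self-morphism $\tilde f\colon A \to A$ with $\nu \circ \tilde f = f \circ \nu$. Since $\nu$ is a quasi-finite generically finite morphism and both $f,\tilde f$ are quasi-finite morphisms, \cref{prop:semconj-by-gene-fin} transfers \cref{a=dconj} for $\tilde f$ (known by \cref{rmk:ksc-abvar-all}) directly to the desired statement for $f$.

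For (2), I would induct on $\dim X$. If $K_X$ is pseudo-effective, apply (1). Otherwise, apply \cref{equiMMP} to produce an equivariant MMP diagram, after replacing $f$ by an iterate; the terminal object $X_r$ carries a quasi-\'etale morphism $\nu\colon A\to X_r$ from an abelian variety, so $X_r$ is $\Q$-abelian and is handled by (1). The birational transformations $\pi_i\colon X_i \dashrightarrow X_{i+1}$ (divisorial contractions and flips) preserve $\d_1$ and, via \cref{prop:ad-and-semiconj} applied in both directions with control on indeterminacy, preserve denseness of orbits and the arithmetic degree, so a dense orbit on $X$ produces one on $X_r$ with matching $\alpha_f$. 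At each Fano contraction $\pi\colon X_i \to Y$ with $\dim Y < \dim X_i$, I split by comparing $\d_1(f_{X_i})$ and $\d_1(f_Y)$: if equal, the dense orbit descends to $Y$ with the same arithmetic degree via \cref{prop:ad-and-semiconj} and the inductive hypothesis closes the case; if strictly greater, the leading $f_{X_i}^{*}$-eigenvector lies in the vertical direction, and one attempts to build a nef canonical height $\hh_{D,f}$ whose vanishing locus is not Zariski dense by means of \cref{prop:weak-Northcott}.

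The hard part is precisely Case TIR, where the fiber-by-fiber construction breaks down: the leading eigendivisor is a single prime divisor $D \sim_{\Q} -K_X$ with $\Supp R_f = D$, so $\k(X,D)=0$ and $D$ admits no second effective $\R$-linear representative, meaning neither clause of \cref{prop:weak-Northcott} can be used to force Zariski non-density of $\{\hh_{D,f} \leq B\}$; the rigidity of the ramification divisor blocks the standard nef-canonical-height argument. Assuming \cref{a=dconj} in Case $\mathrm{TIR}_n$ for all $n \leq \dim X$ (equivalently, only for the Fano fibrations actually produced by the equivariant MMP started from $X$) closes this last branch of the induction and, combined with (1), yields (2).
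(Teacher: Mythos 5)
The paper does not prove this result: it is stated with the citation [MZ22, Theorem 1.7], and the only supporting remark is that a $\Q$-abelian variety's surjective self-morphisms lift to the covering abelian variety, so part (1) follows from \cref{rmk:ksc-abvar-all}. Your sketch matches the top-level structure of Meng--Zhang's argument: part (1) by reduction to abelian varieties via the quasi-\'etale cover and \cref{prop:semconj-by-gene-fin}, and part (2) by running \cref{equiMMP} and inducting on dimension, with Case TIR as the residual obstruction at Fano contractions.

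That said, your sketch conceals three pieces of technical content that are not routine. First, the existence of the lift $\tilde f\colon A\to A$ is not mere ``standard covering theory'': $\pi_1^{\rm \acute et}(X_{\rm reg})$ is infinite (it contains $\pi_1(A)$), so a finiteness argument fails; Meng--Zhang rely on a dedicated lifting theorem (after Nakayama--Zhang), and the replacement by an iterate of $f$ is essential. Second, the passage through flips cannot be disposed of by invoking \cref{prop:ad-and-semiconj} ``in both directions'': a flip has nonempty indeterminacy in both directions and there is no a priori guarantee that a dense orbit avoids those loci. Meng--Zhang instead compare height functions across the small birational map (via a common resolution and the fact that the flip is an isomorphism in codimension one), which yields equality of arithmetic degrees without any orbit-avoidance hypothesis; this is one of the real technical cores of their proof and your sketch would not close without an argument of this type. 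Third, the isolation of conditions $(A_1)$--$(A_5)$ in \cref{CaseTIR} is the outcome of a cascade of intermediate reductions (e.g.\ the case $\kappa(X,-K_X)>0$, which the paper records separately, and the semiample-eigendivisor case), not something read off directly from a $\d_1$-decreasing Fano contraction. Your heuristic that TIR blocks both clauses of \cref{prop:weak-Northcott} is a correct informal reading of the obstruction, but it does not by itself show that TIR is the only case left after the MMP reductions.
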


Note that when $X$ is a Q-abelian variety, then every surjective self-morphism on $X$ lifts to
a surjective self-morphism on the abelian variety covering it and hence \cref{a=dconj} follows from the abelian variety case
\cite[Theorem 2.8]{MZ22}.

The following theorem gives us some insight about the conditions in Case TIR.

\begin{thm}[{\cite[Proposition 1.6]{MZ22}}]
Let $f : X \longrightarrow X$ be a surjective self-morphism of a $\Q$-Gorenstein normal projective variety $X$ over $\QQ$
with positive anti-Kodaira dimension: $\kappa(X, -K_X) > 0$. Suppose $f^*K_X \equiv \d_{1}(f) K_X$. Then \cref{a=dconj} holds for $f$.
\end{thm}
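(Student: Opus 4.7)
My plan is to reduce to the polarized endomorphism case of \cref{thm:ksc-pol} via the Iitaka fibration of $-K_{X}$. First dispose of the trivial case $\d_{1}(f)=1$: the fundamental inequality \cref{thm:fund-ineq} forces $\overline{\alpha}_{f}(x)\le 1$, while $h_{H}^{+}\ge 1$ gives $\underline{\alpha}_{f}(x)\ge 1$, so $\alpha_{f}(x)=1=\d_{1}(f)$. Assume henceforth $\d_{1}(f)>1$. Since $\k(X,-K_{X})>0$ forces $K_{X}\not\equiv 0$ in $N^{1}(X)_{\Q}$ (a numerically trivial divisor has non-positive Iitaka dimension), and $K_{X}$ is a non-zero rational eigenvector of the $\Q$-linear operator $f^{*}$ with eigenvalue $\d_{1}(f)$, we conclude $\d_{1}(f)\in\Q$.

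Use $\k(X,-K_{X})>0$ to form the Iitaka fibration $\phi\colon X\dashrightarrow Z$ of $-K_{X}$, so that $\dim Z=\k(X,-K_{X})>0$ and there is an ample $\Q$-Cartier divisor $A$ on $Z$ with $\phi^{*}A\sim_{\Q}m(-K_{X})$ for some $m>0$ on a sufficiently high birational model. The relation $f^{*}(-K_{X})\equiv\d_{1}(f)(-K_{X})$ forces $f$ to permute the fibers of the Iitaka fibration, so after replacing $f$ by a suitable iterate and $Z$ by a birational modification, $f$ descends to a surjective morphism $f_{Z}\colon Z\longrightarrow Z$ satisfying $f_{Z}^{*}A\equiv\d_{1}(f)A$. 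Since $A$ is ample (hence big) and $\d_{1}(f)>1$, the remark after \cref{def:pol} makes $f_{Z}$ a polarized endomorphism with $\d_{1}(f_{Z})=\d_{1}(f)$. If $O_{f}(x)$ is Zariski dense in $X$, then $\phi(x)$ has Zariski dense, hence infinite, $f_{Z}$-orbit in $Z$, so by \cref{thm:ksc-pol} we get $\alpha_{f_{Z}}(\phi(x))=\d_{1}(f_{Z})=\d_{1}(f)$. The semi-conjugacy inequality \cref{prop:ad-and-semiconj} then yields $\underline{\alpha}_{f}(x)\ge\underline{\alpha}_{f_{Z}}(\phi(x))=\d_{1}(f)$, and combined with \cref{thm:fund-ineq} this forces $\alpha_{f}(x)=\d_{1}(f)$.

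The main obstacle is the descent step: realizing $f_{Z}$ as an honest polarized morphism on a projective model of the Iitaka base. One must use that $f^{*}(-K_{X})\equiv\d_{1}(f)(-K_{X})$ (only numerical equivalence) to show $f$ permutes the Iitaka fibration, pass to an iterate to kill any $\Pic^{0}$-ambiguity arising from the gap between numerical and $\Q$-linear equivalence, and resolve the indeterminacy of the induced rational self-map on $Z$ by a birational modification. A secondary technical point is that \cref{prop:ad-and-semiconj} requires $O_{f}(x)$ to avoid the indeterminacy locus $I_{\phi}$; this is handled by first replacing $X$ with a smooth birational model on which $\phi$ extends to a morphism and checking that this birational passage preserves the existence and value of the arithmetic degree in question.
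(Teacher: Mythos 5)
The reductions in your first paragraph are fine: the case $\d_1(f)=1$ is trivial, and since $K_X$ is a nonzero class in $N^1(X)_\Q$ fixed up to scale by $f^*$, the eigenvalue $\d_1(f)$ is rational (in fact a rational algebraic integer, hence in $\Z$). The problems are concentrated in the descent step, and I don't think the fixes you sketch in the last paragraph actually close the gaps. First, the passage from $f^*(-K_X)\equiv\d_1(f)(-K_X)$ to an actual $\Q$-linear equivariance is not achieved by "passing to an iterate": if you write $f^*(-K_X)=\d_1(f)(-K_X)+E_0$ in $\Pic(X)_\Q$ with $E_0$ numerically trivial, then $(f^n)^*(-K_X)=\d_1(f)^n(-K_X)+\sum_{i<n}\d_1(f)^{n-1-i}(f^i)^*E_0$, so the numerically-trivial error accumulates rather than dies. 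The standard fix is to replace $-K_X$ by $D=-K_X+E$ with $(\d_1(f)-f^*)E=E_0$ in $\Pic^0$, which is possible because $\d_1(f)>1$ exceeds the spectral radius of $f^*$ on $\Pic^0$ — but this introduces a new problem you have not addressed: Iitaka dimension is \emph{not} a numerical invariant, and twisting by a nontrivial element of $\Pic^0$ can destroy it completely (on $X=C\times E$ with $C$ of genus $\ge 2$ and $E$ elliptic, $D=\pr_C^*K_C$ has $\k(X,D)=1$, but $D+\pr_E^*L$ for non-torsion $L\in\Pic^0(E)$ has $\k=-\infty$). So after fixing the eigendivisor you can no longer assert that its Iitaka fibration has positive-dimensional base, which is exactly what your argument needs.

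Second, even granting a $\Q$-linear eigendivisor with positive Iitaka dimension, the descent to a \emph{polarized surjective morphism} $f_Z$ on a projective model of the Iitaka base is not automatic, and your proposed remedy — replacing $X$ by a smooth birational model on which $\phi$ becomes a morphism — destroys precisely the structure you need: a finite self-morphism $f\colon X\to X$ does not lift to a morphism on a birational model, only to a rational self-map, and then nothing in your argument forces the induced $f_Z$ to be a morphism rather than a rational map, let alone polarized. Likewise the application of \cref{prop:ad-and-semiconj} requires that the entire orbit avoid $I_\phi$, which is not guaranteed for a merely Zariski dense (as opposed to generic) orbit, and resolving $I_\phi$ on a birational model reintroduces the previous difficulty. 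These are not cosmetic issues: the reason the theorem is non-trivial is exactly that $f$ only preserves the numerical class of $K_X$, and any argument that quietly upgrades to linear equivalence or to an $f$-equivariant everywhere-defined fibration is assuming the hard part.
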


\begin{rmk}
Using equivariant MMP, we may reduce the problem to the case where $f^*K_X \equiv \d_{1}(f) K_X$ holds.
This theorem ensures once you have $\kappa(X, -K_X) > 0$, then we get the desired result.
Further investigation of the case $\kappa(X, -K_X) = 0$ leads to the Case TIR (\cref{CaseTIR}).
See \cite{MZ22} for more details.
\end{rmk}

Yoshikawa and the author proved that if the original variety $X$
is smooth rationally connected, then Case TIR does not happen.

\begin{thm}[\cite{MY22}]
Let $X$ be a smooth projective rationally connected variety defined over $\QQ$ such that
$X$ admits at least one int-amplified endomorphism.
Then for any surjective self-morphism $f \colon X \longrightarrow X$, \cref{a=dconj} is true.
\end{thm}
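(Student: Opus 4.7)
The plan is to reduce to Meng and Zhang's Case TIR via their equivariant minimal model program, and then to exclude Case TIR under the rational connectedness hypothesis.

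Since $X$ is smooth and rationally connected, $X$ is uniruled and $K_X$ is not pseudo-effective, so the pseudo-effective case of Meng--Zhang does not apply directly. I would instead apply \cref{equiMMP} to a suitable iterate of $f$, obtaining an equivariant sequence $X = X_0 \dashrightarrow X_1 \dashrightarrow \cdots \dashrightarrow X_r$ of divisorial contractions, flips, and Fano contractions, together with a finite quasi-\'etale cover $\nu \colon A \longrightarrow X_r$ by an abelian variety $A$. Rational connectedness is preserved under MMP operations among $\Q$-factorial klt varieties, and the base of any Fano contraction from a rationally connected variety is again rationally connected; so every $X_i$ is rationally connected. Since an abelian variety is not rationally connected and $\nu$ is quasi-\'etale, both $A$ and $X_r$ must be points. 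Each Fano contraction along the way therefore lands in a rationally connected variety of strictly smaller dimension, so I would proceed by induction on $\dim X$ with the one-dimensional case supplied by \cref{thm:ksc-pol}.

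By the Meng--Zhang reduction theorem, what remains is to verify \cref{a=dconj} in Case TIR (\cref{CaseTIR}) for each Fano contraction appearing in this equivariant MMP. Since every intermediate $X_i$ is rationally connected, it is enough to show that conditions (A1)--(A5) of Case TIR cannot all hold simultaneously on a $\Q$-factorial klt rationally connected variety admitting an int-amplified endomorphism. Suppose for contradiction such an $X_i$ exists, with prime divisor $D \sim_\Q -K_{X_i}$ satisfying $f^*D = \d_1(f) D$ and $\Supp R_f = D$, and with an equivariant Fano contraction $\pi \colon X_i \longrightarrow Y$ of relative dimension at least two. Restricting $D$ to a general fiber $F$ gives a totally invariant prime anticanonical divisor on the rationally connected Fano variety $F$. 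Combining the relative ramification formula and the totally invariant equation $f^*R_f = \d_1(f) R_f$ on fibers with the extremality of $-K_{X_i}$ in both $\Nef(X_i)$ and $\overline{\Eff}(X_i)$ required by (A1), one aims to show that $-K_{X_i}|_F$ carries too much positivity to be compatible with $\k(X_i, -K_{X_i}) = 0$.

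The principal difficulty will be this final exclusion of Case TIR under rational connectedness. Conditions (A1)--(A5) are internally consistent in many non-RC settings, so the rational connectedness of $X_i$ must enter essentially, likely through the structure of totally invariant prime divisors on rationally connected Fano fibers together with the relative Iitaka fibration of $(X_i, -K_{X_i})$ over $Y$. Translating the qualitative positivity of $-K_{X_i}$ supplied by rational connectedness into a sharp enough numerical contradiction with the extremality in (A1) is where the essential technical content should lie.
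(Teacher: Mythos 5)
Your high-level strategy is the right one and matches the paper: run Meng--Zhang's equivariant MMP, use the Q-abelian case and the reduction to Case TIR from \cite[Theorem 1.7]{MZ22}, observe that rational connectedness kills the abelian variety at the end of the MMP, and reduce everything to showing that Case ${\rm TIR}_{n}$ cannot arise starting from a smooth rationally connected $X$ with an int-amplified endomorphism. This is indeed how \cite{MY22} is structured, and your observations about rational connectedness being preserved along the MMP and the Fano contractions are correct (with the caveat that the intermediate $X_i$ are only $\Q$-factorial klt, so one needs the singular version of "rationally connected implies Kodaira dimension $-\infty$", which does hold).

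However, the actual mathematical content of the theorem is precisely the exclusion of Case TIR, and your proposal does not prove it; you explicitly flag it as "the principal difficulty" and "where the essential technical content should lie." The sketch you offer for it does not go through as stated. You want to conclude that $-K_{X_i}|_F$ "carries too much positivity to be compatible with $\k(X_i,-K_{X_i})=0$," but there is no tension there: for any Mori fiber space $\pi\colon X_i\to Y$, the restriction $-K_{X_i}|_F$ to a general fiber is ample, yet $\k(X_i,-K_{X_i})$ can perfectly well be zero because global sections of $-mK_{X_i}$ need not restrict surjectively to sections on $F$. So ampleness on fibers is fully compatible with (A1). Likewise, knowing that $D|_F$ is an anticanonical divisor on the Fano fiber $F$ does not, by itself, contradict anything; $D$ is a prime divisor on $X_i$, but $D|_F$ need not be prime, need not be reduced, and need not be totally invariant under any endomorphism of $F$ in a way you can exploit without a much more careful fiberwise analysis of the ramification divisor and of how $f$ acts on the fibration. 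Without a concrete mechanism replacing the "too much positivity" heuristic, the central claim of \cite{MY22} --- that (A1)--(A5) are jointly unsatisfiable on a rationally connected klt variety with an int-amplified endomorphism --- remains unproven in your proposal. The gap is not in the scaffolding but in the one lemma that the theorem actually rests on.
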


\begin{rmk}
This theorem is first proven in dimension three by Meng and Zhang \cite[Theorem 1.11]{MZ22}.
After we proved this theorem, 
Yoshikawa proved that such a variety is actually a variety of Fano type \cite{Yo21}. 
Therefore Kawaguchi-Silverman conjecture follows from \cref{thm:sa-nef-cone}.
But let us remark that Yoshikawa's proof in \cite{Yo21} is an extension of our arguments in \cite{MY22}, 
so it does not give an easier proof of Kawaguchi-Silverman conjecture.
\end{rmk}

As another application of minimal model program,
Meng and Zhang proved Kawaguchi-Silverman conjecture for surjective self-morphisms on
projective surfaces (not necessarily smooth).

\begin{thm}[{\cite[Theorem 1.3]{MZ22}}]\label{thm:ksc-surface}
\cref{a=dconj} is true for all surjective self-morphisms on projective surfaces.
\end{thm}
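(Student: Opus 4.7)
The plan is to reduce to the smooth case, where the result is already known by Kawaguchi for automorphisms and by the Sano–Shibata–Matsuzawa results of \cite{MSS18a} for non-isomorphic surjective self-morphisms (as summarized in \cref{rmk:ksc-surf}). The reduction proceeds via normalization and then an $f$-equivariant resolution after replacing $f$ by a suitable iterate.

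First I would replace $X$ with its normalization $X^{\nu}$. A surjective self-morphism of a projective surface is automatically finite, and $f$ lifts uniquely to a finite surjective self-morphism $f^{\nu}$ on $X^{\nu}$; both are quasi-finite dominant morphisms, and the normalization morphism is generically finite and birational, so \cref{prop:semconj-by-gene-fin} shows that \cref{a=dconj} for $f$ is equivalent to \cref{a=dconj} for $f^{\nu}$. Hence I may assume $X$ is normal, and then the singular locus $\Sigma \subset X$ is a finite set of points. Since $f$ is finite, the sets $f^{-n}(\Sigma) \supset \Sigma$ form an increasing chain of finite sets, so there exists $m \geq 1$ with $(f^{m})^{-1}(\Sigma) = \Sigma$. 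By the iterate-compatibility proposition of the ``First properties'' section, \cref{a=dconj} for $f$ is equivalent to \cref{a=dconj} for $f^{m}$, so I may assume $f^{-1}(\Sigma) = \Sigma$. Then one can produce an $f$-equivariant resolution $\pi \colon X' \to X$ together with a surjective self-morphism $f' \colon X' \to X'$ with $\pi \circ f' = f \circ \pi$.

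At this stage $X'$ is smooth, so the conjecture reduces to known results. If $\kappa(X') \geq 1$, the statement is vacuous by the Kodaira-dimension proposition of the ``First properties'' section. If $\delta_{1}(f') = 1$, then \cref{thm:fund-ineq} combined with $h_{H}^{+} \geq 1$ forces $\alpha_{f'}(x') = 1 = \delta_{1}(f')$ for every $x' \in X'_{f'}(\QQ)$. Otherwise $\delta_{1}(f') > 1$: if $f'$ is an automorphism, \cref{thm:autos}(1) applies, and if $\deg f' \geq 2$, the result is due to \cite{MSS18a} (\cref{rmk:ksc-surf}), which relies on the classification of smooth projective surfaces admitting a non-invertible surjective endomorphism together with the canonical-height machinery of \cref{thm:ksc-pol,thm:ksc-abvar,cor:mds}. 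Finally, \cref{a=dconj} for $f'$ implies \cref{a=dconj} for $f$ by \cref{prop:ad-and-semiconj} and \cref{thm:fund-ineq}: given a dense $f$-orbit $O_{f}(x)$, the surjectivity and birationality of $\pi$ allow us to lift a tail of it to an orbit of $f'$ that is again Zariski dense in $X'$.

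The hard step is producing the $f$-equivariant resolution $(X', f')$. Starting from the minimal resolution of $X$, the induced self-map is only rational, with indeterminacy concentrated over the finitely many points in $\Sigma$; one has to iteratively blow up these indeterminacy points in an $f$-compatible way, and argue that this process terminates, exploiting both the two-dimensionality of $X$ and the fact that $f$ permutes $\Sigma$ (which was arranged by passing to an iterate). The remaining arithmetic bookkeeping is a routine application of \cref{prop:ad-and-semiconj,prop:semconj-by-gene-fin}, so the real content lies in the smooth-surface results cited above together with this equivariant-resolution reduction.
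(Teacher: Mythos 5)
Your reduction to the smooth case has the right shape for the automorphism case, but it breaks at the crucial step for non-invertible endomorphisms: the existence of an $f$-equivariant resolution. For an automorphism the minimal resolution of a normal surface is canonical, so the lift exists automatically — this is exactly what the paper's remark after \cref{thm:ksc-surface} is pointing at when it says the automorphism case "follows from the smooth case by taking equivariant resolution of singularities." But for a finite endomorphism of degree $\geq 2$, the induced rational map on the minimal resolution need not be a morphism, and your claim that one can "iteratively blow up in an $f$-compatible way and argue that this process terminates" is not an argument: after blowing up the indeterminacy points on the source, the \emph{source} changes but the \emph{target} does not, so you never produce a self-map; getting a single smooth $X'$ with a genuine self-morphism $f'$ requires a nontrivial theorem about the local structure of finite endomorphisms of surface singularities, not a formal blowup procedure. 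The actual proof of \cite[Theorem 1.3]{MZ22} does not proceed this way: it runs the $f$-equivariant minimal model program (\cref{equiMMP}) and deduces the result from the Q-abelian case and lower-dimensional Fano contractions — the paper even says explicitly that "except Kawaguchi's automorphism case, the proof heavily relies on classification (or minimal model theory) of surfaces."

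There is also a smaller gap earlier. You claim that "the sets $f^{-n}(\Sigma) \supset \Sigma$ form an increasing chain of finite sets, so there exists $m$ with $(f^m)^{-1}(\Sigma) = \Sigma$." First, the inclusion $\Sigma \subset f^{-1}(\Sigma)$ (equivalently $f(\Sigma) \subset \Sigma$) is not a formal consequence of $f$ being finite and $X$ normal, and needs justification. Second, even granting nestedness, an increasing chain of finite sets need not stabilize — think of $f^{-n}$ of a point under $z \mapsto z^2$ on $\P^1$. Total invariance of the singular locus up to iterates is a genuine theorem about surface endomorphisms, not a pigeonhole observation. Your arithmetic bookkeeping at the very end (applying \cref{prop:ad-and-semiconj} to $\pi^{-1}$, using total invariance of $\Sigma$ to keep the dense orbit away from the exceptional locus, and combining with \cref{thm:fund-ineq}) is sound once those two gaps are filled — but those two gaps are where the real content lies, and the paper's cited source avoids them entirely by using equivariant MMP rather than equivariant resolution.
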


\begin{rmk}
For automorphisms, this theorem follows from smooth case, which was proven by Kawaguchi \cite{Ka08},
by taking equivariant resolution of singularities.
For surjective self-morphisms on smooth projective surfaces, this was proven in \cite{MSS18a}.
\end{rmk}

\subsection{Projective geometry}

There are many works that utilize techniques from projective and birational geometry 
to prove special cases of Kawaguchi-Silverman conjecture.
A common strategy involves reducing the problem to cases that have already been proven, 
or demonstrating that there are no Zariski dense orbits, thereby confirming \cref{a=dconj} in a vacuous sense. 
This approach often requires exploiting the algebro-geometric structures of the varieties in question.
In this subsection, we collect such results.

A smooth projective variety $X$ over $\QQ$ is said to be Calabi--Yau if $\dim X \geq 3$,
$\O_{X}(K_{X}) \simeq \O_{X}$, and $\dim H^{0}(\Omega_{X}^{p}) = 0$ for $ 0 < p < \dim X$.
According to the Beauville-Bogomolov decomposition, 
Calabi-Yau varieties together with abelian varieties and Hyper-K\"ahler varieties 
are building blocks of smooth projective varieties with numerically trivial canonical divisors.
From this point of view, Lesieutre and Satriano proved:

\begin{thm}[{\cite[Corollary 1.5]{LS21}}]\label{thm:CY-to-all}
Let $n$ be a positive integer. Then \cref{a=dconj} is true for all automorphisms of smooth projective varieties $X$ over $\QQ$ 
with dimension at most $n$ and $K_X$ numerically trivial if and only if \cref{a=dconj} is true 
for all automorphisms of smooth Calabi--Yau varieties over $\QQ$ with dimension at most $n$.
\end{thm}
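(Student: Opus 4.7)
The forward implication is trivial: every smooth Calabi--Yau variety has $K_X$ numerically trivial (indeed $K_X \sim 0$), so the truth of \cref{a=dconj} for all automorphisms of smooth projective varieties with $K_X \equiv 0$ and dimension $\leq n$ immediately specializes. I focus on the reverse implication, whose plan is to use the Beauville--Bogomolov decomposition to split an arbitrary $(X,f)$ into product pieces that are either already known or covered by the Calabi--Yau hypothesis.

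Given a smooth projective $X$ over $\QQ$ with $K_X \equiv 0$ and $\dim X \leq n$, the Beauville--Bogomolov theorem produces a finite \'etale cover $\pi \colon \tilde X \longrightarrow X$ with
\[
\tilde X \cong A \times \prod_{i=1}^{r} Y_i \times \prod_{j=1}^{s} Z_j,
\]
where $A$ is an abelian variety, each $Y_i$ is a simply connected Calabi--Yau variety, and each $Z_j$ is a Hyper-K\"ahler variety. Each factor has dimension at most $n$. Since $X$ admits only finitely many connected \'etale covers of degree $\deg \pi$, the pullbacks $(f^m)^{*}\tilde X$ eventually repeat in $m$; after replacing $f$ by an iterate, $f$ lifts to an automorphism $\tilde f \colon \tilde X \longrightarrow \tilde X$ with $\pi \circ \tilde f = f \circ \pi$. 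Because $\pi$ is a quasi-finite \'etale morphism, \cref{prop:semconj-by-gene-fin} shows that \cref{a=dconj} for $(X,f)$ is equivalent to \cref{a=dconj} for $(\tilde X,\tilde f)$.

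Next I would argue that, after a further iterate, $\tilde f$ splits diagonally on the product. The abelian factor $A$ is recovered canonically as the Albanese variety of $\tilde X$, so $\tilde f$ induces compatible automorphisms $f_A$ on $A$ and $g$ on the simply connected fiber. The decomposition of that fiber into Calabi--Yau and Hyper-K\"ahler factors is canonical via Hodge numbers (Hyper-K\"ahler factors carry a holomorphic symplectic form, while $h^{p,0}(Y_i)=0$ for $0<p<\dim Y_i$), so $g$ at worst permutes the $Y_i$'s among themselves and the $Z_j$'s among themselves. Replacing $\tilde f$ once more by an iterate, one obtains
\[
\tilde f = f_A \times \prod_{i=1}^{r} f_{Y_i} \times \prod_{j=1}^{s} f_{Z_j}.
\]
Iterated application of the product lemma reduces \cref{a=dconj} for $\tilde f$ to \cref{a=dconj} for each individual factor. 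The abelian factor is handled by \cref{rmk:ksc-abvar-all}, the Hyper-K\"ahler factors by \cref{thm:autos}, and the Calabi--Yau factors are precisely covered by the hypothesis (using $\dim Y_i \leq n$).

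The main obstacle is the reduction to a factorwise automorphism. Producing the lift $\tilde f$ is routine given finiteness of \'etale covers of bounded degree, but arranging that $\tilde f$ respects the product structure on the nose (and not merely up to a permutation of isomorphic factors) requires carefully exploiting the canonicity of each piece of the Beauville--Bogomolov decomposition and passing to an iterate that stabilizes the permutation action. One should also verify that Zariski density of orbits is preserved under these reductions, which is precisely the content of \cref{prop:semconj-by-gene-fin} combined with the product lemma.
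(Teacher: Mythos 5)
Your proposal is correct and follows the same route as the cited proof in \cite{LS21}: apply the Beauville--Bogomolov decomposition, lift $f$ to the finite \'etale cover after passing to an iterate, split the lifted automorphism into a product of automorphisms on the individual factors, and combine the product lemma with \cref{prop:semconj-by-gene-fin} together with the known abelian and Hyper-K\"ahler cases.

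One place where your argument is thinner than you acknowledge is the step where $\tilde f$ becomes an honest product rather than a ``twisted'' family over $A$. Your stated worry is only about permutations of isomorphic factors, which passing to an iterate certainly handles. But even after fixing each factor setwise, you need to rule out that $\tilde f$ restricted over a point $a \in A$ gives an automorphism of the fiber that varies with $a$. The decisive fact here is that a simply connected Calabi--Yau or Hyper-K\"ahler variety $W$ has $h^{0}(T_{W})=0$, so $\Aut(W)$ is discrete; since the Albanese fibration $\tilde X \to A$ is the trivial projection and $\tilde f$ commutes with it, the map $a \mapsto \tilde f|_{\{a\}\times W}$ is a morphism from $A$ into a discrete group and hence constant. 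Likewise, the statement that an automorphism of $\prod Y_i \times \prod Z_j$ preserves (up to permutation) the individual factors comes from the uniqueness of the Beauville--Bogomolov decomposition, which is proved via the de Rham splitting of the tangent bundle rather than by comparing Hodge numbers as you suggest. Supplying these two points would close the gaps and complete the argument; everything else, including the reduction via \cref{prop:semconj-by-gene-fin} and the repeated use of the product lemma, is applied correctly.
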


See \cite[Proposition 3.1]{LM21} for a generalization of this theorem to singular varieties.
See \cite{Li22} as well for a related result.

For a smooth projective variety $X$, let us write the irregularity of $X$ by $q(X)$, i.e.\ $q(X) = \dim H^{1}(X, \O_{X}) $.
Chen, Lin, and Oguiso studied Kawaguchi-Silverman conjecture and existence of Zariski dense orbits
for varieties with positive irregularities $q(X) > 0$ in \cite{CLO}.
In any dimension, using albanese morphism and geometry on abelian varieties, they proved:

\begin{thm}[{\cite[Theorem 1.5]{CLO}}]\label{thm:ksc-irregular}
Let $X$ be a smooth projective variety over $\QQ$ with Kodaira dimension zero.
\begin{enumerate}
\item If $q(X) = \dim X$, then \cref{a=dconj} holds for all dominant rational self-maps $f \colon X \dashrightarrow X$.
\item If $q(X) = \dim X -1$, then any birational map $f \colon X \dashrightarrow X$ has no Zariski dense orbits and hence \cref{a=dconj}
is vacuously true.
\end{enumerate}
\end{thm}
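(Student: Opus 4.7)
The strategy for both parts is to reduce to the Albanese morphism. By Kawamata's structure theorem for smooth projective varieties of Kodaira dimension zero with maximal Albanese dimension, the Albanese morphism $a \colon X \to A := \mathrm{Alb}(X)$ is surjective; in case (1) with $q(X) = \dim X$ it is birational, and in case (2) with $q(X) = \dim X - 1$ its generic fiber is a smooth projective curve of genus $1$.

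\emph{Part (1).} Since any rational map from a smooth variety to an abelian variety extends to a morphism (cf.\ \cref{rmk:ksc-abvar-all}), the rational map $a \circ f \colon X \dashrightarrow A$ is in fact a morphism. Combined with the birational invariance of the Albanese, this descends through $a$ to a dominant morphism $g \colon A \to A$ satisfying $a \circ f = g \circ a$. By birational invariance of dynamical degrees, $\delta_1(f) = \delta_1(g)$. Take $x \in X_f(\QQ)$ with Zariski dense orbit. Because $a$ is a morphism, $I_a = \emptyset$ and \cref{prop:ad-and-semiconj} applies, giving $\underline{\alpha}_g(a(x)) \leq \underline{\alpha}_f(x)$. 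Since $a$ is proper and dominant, $O_g(a(x)) = a(O_f(x))$ is Zariski dense in $A$; the $\alpha=\delta$ conjecture for surjective self-morphisms of abelian varieties (\cref{rmk:ksc-abvar-all}) thus forces $\alpha_g(a(x)) = \delta_1(g) = \delta_1(f)$. Combined with $\overline{\alpha}_f(x) \leq \delta_1(f)$ from \cref{thm:fund-ineq}, we obtain $\alpha_f(x) = \delta_1(f)$.

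\emph{Part (2).} The same Albanese functoriality applies, but since $f$ is birational, the induced morphism $g \colon A \to A$ is now an automorphism (birational invariance of the Albanese for smooth projective varieties), and the identity $a \circ f = g \circ a$ identifies $f$ as a birational self-map of the genus-$1$ fibration $a$, restricting on a generic smooth fiber to an isomorphism $F_y \to F_{g(y)}$. Suppose $x \in X_f(\QQ)$ has Zariski dense orbit; then $O_g(a(x))$ is dense in $A$, and the iterates $\{f^n(x)\}$ contribute one point to each fiber $F_{g^n(a(x))}$. The plan is to show that the Zariski closure of $O_f(x)$ is a proper subvariety of $X$, contradicting density. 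Passing to the relative Jacobian $J \to A$ of $a$, view the generic fiber as an elliptic curve $E/K(A)$; the fiberwise isomorphisms induced by $f$ correspond to translations on $E$ combined with pullback along $g$. Iterating, the fiberwise positions of $f^n(x)$ form an arithmetic-progression-like sequence in $E(\overline{K(A)})$, and the Lang--N\'eron theorem, giving finite generation of the Mordell--Weil group of $E/K(A)$ modulo the $\QQ$-trace, confines this sequence to a finitely generated subgroup, hence to a proper closed subscheme of $X$.

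The main obstacle is the fiberwise analysis in part (2): making precise the group-theoretic structure of the sequence $\{f^n(x)\}$ across varying fibers under the automorphism $g$ of the base, handling the case where the fibration has no rational section, and leveraging Lang--N\'eron finiteness to confine the orbit. Part (1), by contrast, is a routine descent to the abelian variety case via \cref{rmk:ksc-abvar-all} and \cref{prop:ad-and-semiconj}.
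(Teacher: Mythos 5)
The paper does not contain a proof of this theorem: it is a survey that cites \cite[Theorem 1.5]{CLO} and records only (in the following remark) Kawamata's structure theorem that feeds into it. So I'll assess your proposal on its own merits.

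Part (1) is correct, and is the expected argument. Kawamata gives $a\colon X\to A$ birational; any rational map from a smooth variety to an abelian variety extends to a morphism, so the induced $g\colon A\to A$ is a (surjective) morphism; birational conjugacy gives $\delta_1(f)=\delta_1(g)$; properness and surjectivity of $a$ push Zariski density of $O_f(x)$ down to $O_g(a(x))$; and then \cref{prop:ad-and-semiconj}, \cref{rmk:ksc-abvar-all}, and \cref{thm:fund-ineq} close the argument exactly as you say. (Minor point: the reason $g$ is an automorphism in part (2) is that $f^{-1}$ also induces a morphism $g'$ on $A$ with $g\circ g'=\mathrm{id}$, not ``birational invariance of the Albanese'' per se, but the conclusion is fine.)

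Part (2) has a genuine gap, which you yourself flag but underestimate. Two concrete problems. First, the ``arithmetic-progression in $E(\overline{K(A)})$'' picture is not well defined: $f^n(x)$ lies in the fiber $F_{g^n(a(x))}$, and there is no canonical way to compare points in different fibers when the fibration has no rational section; any such comparison requires choices that are not compatible with the dynamics of $g$ on the base. Second, and more seriously, the inference ``confined to a finitely generated subgroup, hence to a proper closed subscheme of $X$'' is simply false. A single non-torsion class $P\in E(K(A))$ generates a countable subgroup whose Zariski closure in the generic fiber is all of $E$ (the only proper algebraic subgroups of an elliptic curve are finite), and spreading out, the Zariski closure in $X$ of the associated multisections is all of $X$. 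So Lang--N\'eron finiteness of the Mordell--Weil group places no constraint whatsoever on Zariski density of the orbit, and your sketch does not rule out a dense orbit. Some genuinely different rigidity input is needed — for example the finiteness of the pluricanonical representation, or structure theory for birational maps preserving the Albanese fibration over an abelian base — and this is precisely where the actual work in \cite{CLO} lies. As written, part (2) is not a proof.
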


See \cite{Li22} for a related result.

\begin{rmk}
For a smooth projective variety $X$ with Kodaira dimension zero,
the irregularity is at most $\dim X$: $0 \leq q(X) \leq \dim X$.
Moreover, if $q(X) = \dim X$, then the albanese morphism is birational and hence $X$ is birational to an abelian variety. 
See \cite{Kaw81}.
\end{rmk}

Constructing a fibration is the most common way to reduce Kawaguchi-Silverman conjecture
to easier cases.
Let $f \colon X \longrightarrow X$ be a surjective self-morphism.
Suppose we have constructed a fibration (i.e.\ proper surjective morphism with connected fibers)
$\pi \colon X \longrightarrow Y$ and a self-morphism $g$ on $Y$ such that $g \circ \pi = \pi \circ f$.
Such an equivariant fibration structure is helpful to understand the dynamics of $f$.
Indeed, if $\d_{1}(f) = \d_{1}(g)$, then Kawaguchi-Silverman conjecture for $f$ follows from that of $g$
(cf.\ \cref{prop:ad-and-semiconj} and comments after that).
But when $\d_{1}(f) > \d_{1}(g)$, there is no way to deduce the conjecture to $g$.
(Case TIR (\cref{CaseTIR}) can be understood as an appearance of this difficulty.)
Projective bundles are probably the easiest fibration and this case is already non-trivial in general.

\begin{thm}[{\cite[Theorem 1.10, Corollary 6.8.]{LS21}}]\ 

\begin{enumerate}
\item
\cref{a=dconj} holds for all surjective self-morphisms on $n$-fold rational normal scrolls over $\QQ$. 

\item
Let $C$ be a smooth projective curve over $\QQ$ and $\E$ be a vector bundle on $C$.
Then \cref{a=dconj} holds for all surjective self-morphisms on the projective bundle $\P_{C}(\E)$ if 
it holds for all surjective self-morphisms on $\P_{C}(\E)$ with $\E$ semistable of degree $0$.
\end{enumerate}
\end{thm}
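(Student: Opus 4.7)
For part (1), Grothendieck's splitting theorem implies every vector bundle on $\P^{1}$ decomposes as a direct sum of line bundles, so every $n$-fold rational normal scroll is of the form $\P_{\P^{1}}(\O(a_{0})\oplus\cdots\oplus\O(a_{n-1}))$. This is the projectivization of a split bundle over the toric variety $\P^{1}$, hence is itself a smooth projective toric variety, and \cref{a=dconj} follows from \cref{cor:mds}.

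For part (2), let $f\colon X=\P_{C}(\E)\to X$ be surjective. If $C=\P^{1}$ we are in part (1), so assume $C$ has positive genus. Then $\pi\colon X\to C$ is the maximal rationally connected fibration of $X$ (fibers $\P^{n-1}$ are rationally connected, while $C$ is not uniruled), so $f$ must preserve it: there is a surjective $g\colon C\to C$ with $\pi\circ f=g\circ\pi$. The plan is then a two-step reduction to the hypothesis that $\E$ is semistable of degree zero.

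First (semistability): the Harder--Narasimhan filtration $0=\mathcal{F}_{0}\subsetneq\cdots\subsetneq\mathcal{F}_{k}=\E$ is canonical, so $g^{\ast}\mathcal{F}_{\bullet}$ is the HN filtration of $g^{\ast}\E$. Using the decomposition $f^{\ast}\O_{X}(1)\cong \O_{X}(d)\otimes\pi^{\ast}M$ for some $d\geq 1$ and $M\in\Pic(C)$, one argues that some iterate $f^{N}$ stabilizes the flag of projective subbundles $\P_{C}(\E/\mathcal{F}_{j})\subset X$. A Zariski dense $f$-orbit cannot lie in any proper subbundle, so either $k=1$ (so $\E$ is semistable) or one obtains a dense orbit on a lower-rank projective subbundle to which we apply induction on $\rank\E$. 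Second (degree zero): $\P_{C}(\E\otimes L)\cong\P_{C}(\E)$ compatibly with $f$, and the degree of $\E$ shifts by $n\deg L$. When $n\mid\deg\E$ we twist directly; otherwise pull back along a finite étale cover $C'\to C$ of degree $e$ chosen so that $n\mid e\deg\E$, and use \cref{prop:semconj-by-gene-fin} to transfer \cref{a=dconj} between $X$ and its pullback to $C'$.

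The main obstacle will be the Harder--Narasimhan step: one must rigorously show that the canonical flag is preserved by some power of $f$, carefully tracking the twist $\pi^{\ast}M$ and the interaction between $g^{\ast}$ and the sheaf-theoretic map induced by $f$, and then verify that the restriction of $f^{N}$ to a stabilized proper subbundle genuinely yields a self-morphism of a projective bundle on $C$ (or an étale cover thereof) of smaller rank to which the inductive hypothesis or the semistable degree-zero hypothesis can be reapplied.
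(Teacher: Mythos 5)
Part (1) is fine: a rational normal scroll is the projectivization of a split bundle over $\P^1$ (or a cone over such), hence a projective toric variety, so \cref{cor:mds} applies. This may differ from the route in \cite{LS21} (where (1) is essentially the case $C=\P^1$ of (2), using that a semistable degree-zero bundle on $\P^1$ is trivial, giving $\P^1\times\P^{n-1}$), but it is a valid argument within the framework of the survey.

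The ``semistability'' reduction in part (2) has a genuine gap. Even granting that some iterate $f^N$ stabilizes the subbundles $\P_C(\E/\mathcal{F}_j)$ coming from the Harder--Narasimhan flag, your conclusion that either $k=1$ or ``one obtains a dense orbit on a lower-rank projective subbundle to which we apply induction'' does not follow: a Zariski dense $f$-orbit in $X$ is by definition not contained in any proper closed subvariety, so it does not restrict to a dense orbit on a stabilized subbundle, and there is no smaller-rank dynamical system to induct on; nor does non-semistability of $\E$ by itself preclude dense orbits on $X$. The actual reduction goes through the nef cone rather than through the flag of subvarieties. One has $N^1(X)=\Z\xi\oplus\Z\phi$ with $\xi=c_1(\O_X(1))$ and $\phi$ the fiber class, and $\Nef(X)$ is generated by $\phi$ and $D:=\xi-\mu_{\min}(\E)\,\phi$; after an iterate $f^*$ fixes both rays. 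By the Grothendieck relation,
\begin{align}
D^{\,n}=\deg\E-n\,\mu_{\min}(\E)=\sum_i\bigl(\mu_i-\mu_{\min}\bigr)\,\rank(\mathrm{gr}_i),
\end{align}
so $D$ is a big nef eigendivisor of $f^*$ exactly when $\E$ is \emph{not} semistable. In that case the leading eigenray is either $\phi$ (descend to $C$ via \cref{prop:ad-and-semiconj} and use the curve case) or $D$ (apply the nef canonical height attached to a big nef eigendivisor, using \cref{prop:weak-Northcott}), and \cref{a=dconj} follows without invoking the hypothesis at all. Only when $\E$ is semistable, so $D^n=0$, does one need the degree-zero normalization; your twist-and-cover step is in the right spirit, but since the hypothesis is over the same curve $C$, the cover must be taken to be an isogeny $C\to C$ along which $f$ lifts (this suffices because the case $g(C)\geq 2$ is vacuous and $C=\P^1$ needs no cover), a point the proposal leaves unaddressed.
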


\begin{thm}[{\cite[Proposition 4.1]{LM21}}]
Let $Y$ be a smooth Fano variety over $\QQ$ of Picard number one. 
Let $X = \P_Y(\E)$ be a projective bundle over $Y$. 
Then \cref{a=dconj} holds for all surjective self-morphisms $f : X \longrightarrow X$.
\end{thm}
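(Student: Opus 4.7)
The plan is to exploit that $X = \P_{Y}(\E)$ has Picard number two (since $Y$ has Picard number one), so the $f^{*}$-action on $N^{1}(X)_{\R}$ is a $2\times 2$ linear map I can classify completely. Since $f$ is a surjective self-morphism between projective varieties of equal dimension, $f$ is finite and $f^{*}$ preserves the polyhedral nef cone $\Nef(X)$, which has exactly two extremal rays: one spanned by $\pi^{*}H$ (for $H$ the ample generator of $N^{1}(Y)$, noting that $\pi^{*}H$ is non-ample hence lies on the boundary of $\Nef(X)$) and one spanned by some nef class $L$. Replacing $f$ by $f^{2}$ (harmless for \cref{a=dconj}), I may assume both rays are $f^{*}$-eigen, so $f^{*}(\pi^{*}H) \equiv \lambda\,\pi^{*}H$ and $f^{*}L \equiv \mu L$ with $\d_{1}(f) = \max\{\lambda,\mu\}$.

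The first key step is that the relation $f^{*}(\pi^{*}H) \equiv \lambda\,\pi^{*}H$ forces $f$ to descend through $\pi$. Indeed, for an irreducible curve $C$ contained in a fiber of $\pi$, the projection formula gives
\begin{align}
H \cdot \pi_{*}f_{*}C = \pi^{*}H \cdot f_{*}C = f^{*}\pi^{*}H \cdot C = \lambda(\pi^{*}H \cdot C) = 0,
\end{align}
and ampleness of $H$ on $Y$ forces $\pi(f(C))$ to be a point; by rigidity there is a morphism $g \colon Y \to Y$ with $\pi \circ f = g \circ \pi$. Comparing pullbacks gives $g^{*}H \equiv \lambda H$, and since $Y$ has Picard number one, $g$ is polarized with $\d_{1}(g) = \lambda$. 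If $\lambda \geq \mu$, so that $\d_{1}(f) = \d_{1}(g)$, then for $x \in X_{f}(\QQ)$ with Zariski-dense orbit, $\pi(x)$ has Zariski-dense $g$-orbit in $Y$, and \cref{thm:ksc-pol} combined with \cref{prop:ad-and-semiconj} and \cref{thm:fund-ineq} yields $\alpha_{f}(x) = \d_{1}(f)$.

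The hard case is $\mu > \lambda$, where $L$ lies on the boundary of $\Nef(X)$ and is nef but not ample, with $\d_{1}(f) = \mu$. My plan is to show that $L$ is semi-ample, so that it defines a second Mori contraction $\pi' \colon X \to Y'$ with $L \sim_{\R} (\pi')^{*}A$ for some ample $A$ on $Y'$; after a further iterate, the same fiber-preservation argument as above shows $f$ descends through $\pi'$ to a surjective self-morphism $g' \colon Y' \to Y'$ polarized by $A$ with $\d_{1}(g') = \mu = \d_{1}(f)$, and the proof concludes by the descent argument of the previous case. As a fallback when direct semi-ampleness is delicate, one can work with the nef canonical height $\hh_{L,f}(x) := \lim_{n \to \infty} \mu^{-n} h_{L}(f^{n}(x))$, which satisfies $\hh_{L,f} \circ f = \mu\,\hh_{L,f}$ and $\hh_{L,f} = h_{L} + O(\sqrt{h_{H}^{+}})$, and then apply the weak Northcott criterion \cref{prop:weak-Northcott} to $L$.

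The main obstacle is therefore the geometric positivity input in the case $\mu > \lambda$: establishing semi-ampleness of $L$, or at least $\kappa(X, L) \geq 1$. After normalization one may write $L = \xi + c_{0}\,\pi^{*}H$, where $\xi$ is the tautological class and $c_{0}$ is the extremal slope at which $\xi + c\,\pi^{*}H$ becomes non-ample, and the projection formula yields
\begin{align}
H^{0}(X, \O_{X}(mL)) \cong H^{0}(Y, \Sym^{m}\E \otimes \O_{Y}(mc_{0}H)).
\end{align}
The task is to deduce, from the Fano hypothesis on $Y$ together with the strong constraint that the existence of an $f$ with $\mu > \lambda$ imposes on $\E$, that these sections grow with $m$; this is the delicate step, and requires exploiting the structure of vector bundles on Picard-one Fano varieties.
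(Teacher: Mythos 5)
Your setup is correct: $X$ has Picard number two, $\Nef(X)$ is a two-dimensional cone with extremal rays $\R_{\geq 0}\pi^{*}H$ and $\R_{\geq 0}L$, after passing to $f^{2}$ both rays are $f^{*}$-eigen, and the rigidity argument showing that $f^{*}(\pi^{*}H)\equiv\lambda\pi^{*}H$ forces $f$ to descend to $g\colon Y\to Y$ with $g^{*}H\equiv\lambda H$ is right. The reduction when $\lambda\geq\mu$ (descend to the polarized $g$, apply \cref{thm:ksc-pol}, \cref{prop:ad-and-semiconj}, and \cref{thm:fund-ineq}) is also sound, modulo the small edge case $\lambda=1\geq\mu$ where $\d_{1}(f)=1$ and the conclusion is immediate; note that everything so far uses only that $Y$ has Picard number one, not that it is Fano.

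However, you have a genuine gap in the case $\mu>\lambda$, and it is the essential one. You yourself flag it: you need either that $L$ is semi-ample (giving a second equivariant fibration through which $f$ descends to a polarized map of degree $\mu$), or at least that $\kappa(X,L)\geq 1$ so that the nef canonical height $\hh_{L,f}$ combined with \cref{prop:weak-Northcott} yields the weak Northcott property. Neither of these is established. Nef divisors on the boundary of $\Nef(X)$ need not be semi-ample, nor have positive Iitaka dimension, in general; this is exactly where the Fano hypothesis on $Y$ (together with the existence of $f$ with $\mu>\lambda$ constraining $\E$) must be exploited, and your proposal stops at the observation that ``this is the delicate step.'' Writing $H^{0}(X,\O_X(mL))\cong H^{0}(Y,\Sym^{m}\E\otimes\O_{Y}(mc_{0}H))$ correctly reduces the problem to a positivity statement about $\Sym^{m}\E$, but without an actual argument that these spaces grow, the proof of the theorem is incomplete. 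Since the case $\mu>\lambda$ is precisely where the theorem goes beyond what would hold for an arbitrary Picard-number-one base, this is a gap at the heart of the statement rather than a routine omission.
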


We end this subsection with a list of results on $3$-folds.

\begin{thm}\ 

\begin{enumerate}
\item
{\rm \cite[Proposition 1.7]{LS21}}
Let $X$ be a smooth projective variety over $\QQ$ with $\dim X = 3$ and Kodaira dimension zero.
Then \cref{a=dconj} is true for surjective self-morphisms $f \colon X \longrightarrow X$ with $\deg f \geq 2$.

\item
{\rm \cite[Theorem 1.8]{LS21}}
Let $X$ be a smooth Calabi-Yau three fold and $f \colon X \longrightarrow X$ an automorphism.
Suppose either 
\begin{itemize}
\item the second Chern class $c_{2}(X)$ is strictly positive on $\Nef(X)$, or
\item there is a non-zero semi-ample nef divisor $D$ such that $(c_{2}(X)\cdot D)=0$.
\footnote{Oguiso pointed out that the proof contains an error. The proof employs Oguiso's theorem on the finiteness of
$c_{2}$-contractions, but it is only up to automorphisms and therefore one cannot deduce that $D$ is fixed
by some iterates of $f$. Although not yet published, Oguiso has provided an alternative proof for this case and has also proven
some generalizations \cite{OguisoPreprint}.}
\end{itemize}
Then \cref{a=dconj} holds for $f$.

\item
{\rm \cite[Theorem 1.10 (1)]{LS21}}
Let $X$ be a projective variety over $\QQ$ with $\dim X=3$.
Suppose $X$ has a Mori fiber space structure (see \cite[Definition 6.1]{LS21} for the definition).
Then \cref{a=dconj} holds for automorphisms on $X$.

\item
{\rm \cite[Theorem 1.6]{CLO}}
Let $X$ be a smooth projective variety over $\QQ$ with $\dim X = 3$ and $q(X) > 0$.
\begin{enumerate}
\item[ {\rm (a)}] \cref{a=dconj} is true for automorphisms on $X$.
\item[ {\rm (b)}] Suppose $X$ is not covered by rational surfaces. Then \cref{a=dconj} is true for birational self-maps on $X$.
\end{enumerate}

\end{enumerate}
\end{thm}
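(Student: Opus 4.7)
The theorem assembles four independent threefold results; I reduce each part to cases already settled earlier in the survey, using respectively Beauville-Bogomolov, Kawaguchi's canonical heights from $(f,f^{-1})$, the Mori fiber space structure, and the Albanese morphism.

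For part (1), I start from $\kappa(X)=0$ and the ramification formula $K_X = f^{*}K_X + R_f$. Iterating and using $\kappa(X, K_X)=0$ forces $K_X\equiv 0$ after replacing $f$ by an iterate. A finite \'etale cover $\tilde X$ with $K_{\tilde X}\sim 0$ then splits by Beauville-Bogomolov as $A\times\prod Y_j$ with each $Y_j$ a strict Calabi-Yau (there are no hyper-K\"ahler threefolds). Any self-morphism of a strict Calabi-Yau is \'etale, and a strict Calabi-Yau has finite fundamental group, so such a self-map is an automorphism; consequently $\deg f\ge 2$ forces the abelian factor $A$ to be nontrivial. Projecting an $f$-dense orbit onto $A$ and invoking \cref{thm:ksc-abvar} closes this part.

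For part (2), I attach nef canonical heights $\hat{h}_{D_+}$ to $f$ and $\hat{h}_{D_-}$ to $f^{-1}$ as in \cref{auto-surf-hk}, where $D_\pm$ are leading-eigenvalue nef classes. It then suffices to show $D := D_+ + D_-$ is big; since $D$ is nef on a threefold, this reduces to $D^{3}>0$. Hirzebruch-Riemann-Roch on a smooth Calabi-Yau threefold gives
\begin{align*}
\chi(\O_X(mD)) = \frac{m^{3}}{6}D^{3} + \frac{m}{12}\, D\cdot c_2(X).
\end{align*}
Under the first alternative, $(c_2(X)\cdot D_\pm)>0$, and combining this with $f^{*}c_2(X)=c_2(X)$ and a Hodge-index-type inequality forces $(D_+\cdot D_-\cdot H)>0$ for ample $H$, and then $D^{3}>0$. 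Under the second alternative, the semiample $D$ with $D\cdot c_2(X)=0$ defines, after iterates, an $f$-invariant fibration $X\to Z$ with $\dim Z<3$, reducing KSC to the surface or curve case. Establishing $D^{3}>0$ from positivity of $c_2(X)$ is the main technical step here.

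Part (3) uses that an automorphism $f$ permutes the finitely many extremal rays of the Mori cone, hence after an iterate descends through the MFS $\pi:X\to Y$ to some $g:Y\to Y$. Since $\dim Y\le 2$, \cref{thm:ksc-surface} handles $g$. If $\delta_1(f)=\delta_1(g)$, then \cref{prop:ad-and-semiconj} transfers KSC from $g$ to $f$; otherwise the $f$-invariant Fano fibers carry a relative ample class producing a fibrewise canonical height via the Call-Silverman construction (\cref{thm:ksc-pol}). Part (4) exploits the Albanese morphism $a:X\to\mathrm{Alb}(X)$, which is $f$-equivariant up to translation; dense $f$-orbits push to dense orbits in $a(X)$, and combining \cref{thm:ksc-abvar,prop:ad-and-semiconj} reduces matters to controlling the dynamics on the fibers of $a$. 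For (a), an equivariant MMP analysis (in the spirit of \cref{equiMMP}) forces $\delta_1(f)=\delta_1(f_{\mathrm{Alb}})$. For (b), the hypothesis that $X$ is not covered by rational surfaces rules out uniruled Albanese fibers that could strictly increase $\delta_1(f)$ beyond $\delta_1(f_{\mathrm{Alb}})$; this exclusion, together with the $D^{3}>0$ verification in part (2), constitutes the principal obstacle of the whole plan.
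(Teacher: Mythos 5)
The survey states this theorem purely by citation to \cite{LS21} and \cite{CLO} and does not reproduce the proofs, so your proposal is being compared against the outlines in those sources rather than against text in the paper itself. Two concrete problems stand out.

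First, your argument for the second alternative in part (2) is precisely the flawed step that the paper's own footnote flags. You write that the semiample $D$ with $D\cdot c_2(X)=0$ ``defines, after iterates, an $f$-invariant fibration $X\to Z$.'' But the finiteness of $c_2$-contractions (Oguiso's theorem) holds only modulo the automorphism group, so one cannot conclude that some iterate of $f$ fixes $D$ — this is exactly why the proof in \cite{LS21} was found to be incomplete and why Oguiso later supplied a different argument (\cite{OguisoPreprint}). You have, in effect, independently reproduced the gap rather than a proof. Relatedly, in the first alternative of part (2) your route via $D^3>0$ is stronger than needed and not obviously achievable from $c_2\cdot D>0$; the standard route is to use Riemann–Roch to get $\kappa(X,D)>0$ (i.e.\ $h^0(mD)\geq 2$), which is already enough via the weak Northcott statement (\cref{prop:weak-Northcott}), and one should not assert bigness of $D_++D_-$.

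Second, in part (4)(a) you invoke an ``equivariant MMP analysis (in the spirit of \cref{equiMMP})'' to force $\delta_1(f)=\delta_1(f_{\mathrm{Alb}})$. The equivariant MMP stated in the survey (\cref{equiMMP}) is only available for varieties admitting an int-amplified endomorphism, and an automorphism with $\delta_1(f)>1$ is never int-amplified, so that tool does not apply here. The actual proof in \cite{CLO} goes through a case analysis of the Albanese fibration (the assumption $q(X)>0$ gives $1\leq\dim\mathrm{Alb}(X)\leq 3$) combined with surface results and structure theorems for the fibers, not through equivariant MMP. Part (3) is also stated too loosely: the step ``the $f$-invariant Fano fibers carry a relative ample class producing a fibrewise canonical height'' does not by itself handle the case $\delta_1(f)>\delta_1(g)$, and that is the genuinely hard case (this is exactly the shape of difficulty isolated as Case TIR in the survey). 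Part (1) is the most plausible of your four sketches, though you should check that your ramification/iteration argument genuinely forces $K_X\equiv 0$ in the presence of a non-invertible endomorphism and not merely $\kappa(X)=0$, and verify the splitting/descent details against \cite{LS21}.
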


\subsection{Some useful facts}

In this subsection, we introduce some basic facts
that can be used to construct fibrations equivariant under self-morphisms.
For more about this topic, you may refer to, for instance,
\cite[Theorem 4.19]{Nak10}, \cite[section 4, Lemma 6.2]{MZ18}, \cite[Theorem 7.2]{MZ22}, \cite[section 4]{JSXZ21}.
Let us start with the following fundamental fact.
\begin{prop}
Let $f \colon X \longrightarrow X$ be a surjective self-morphism on a normal projective variety $X$
defined over $\QQ$.
Let $D$ be a Cartier divisor on $X$ such that
\begin{itemize}
\item $f^{*}D \equiv dD$ for some $d \in \Z_{>0}$;
\item $D$ is base point free and (its sublinear system) defines a morphism $\f \colon X \longrightarrow \P^{N}_{\QQ}$.
\end{itemize}
Let $X \xrightarrow{\pi} Y \to \P^{N}_{\QQ}$ be the Stein factorization of $\f$.
Then $f$ induces a surjective self-morphism $g \colon Y \longrightarrow Y$ such that
 \[
\xymatrix{
X \ar[r]^{f} \ar[d]_{\pi} & X \ar[d]^{\pi} \\
Y \ar[r]_{g}& Y
}
\]
commutes.
\end{prop}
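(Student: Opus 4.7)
The plan is to apply the rigidity property of the Stein factorization: since $\pi$ is proper with $\pi_{*}\O_{X} = \O_{Y}$, any morphism from $X$ to a separated scheme that is constant on each fiber of $\pi$ factors uniquely through $\pi$. Applied to $h := \pi \circ f \colon X \to Y$, the problem reduces to showing that $\pi \circ f$ is constant on every fiber of $\pi$; the resulting $g \colon Y \to Y$ is automatically surjective because $\pi \circ f$ and $\pi$ are. Recall also from earlier in the text that $f$ is automatically a finite morphism.

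Fix a fiber $F = \pi^{-1}(y)$; it is closed and connected because $\pi_{*}\O_{X} = \O_{Y}$. I will show that $\f \circ f$ is constant on $F$, since then the finiteness of $\phi$ and the connectedness of $f(F)$ will force $\pi \circ f$ itself to be constant on $F$. The key intersection-theoretic observation is that an irreducible curve $C \subset X$ is $\pi$-contracted if and only if $D \cdot C = 0$; in particular every irreducible curve in $F$ satisfies $D \cdot C = 0$. I claim the same holds for every irreducible curve $C' \subset f(F)$. By the finiteness of $f|_{F}\colon F \to f(F)$, the preimage $(f|_{F})^{-1}(C')$ is a closed subset of pure dimension one that surjects onto $C'$, so one can choose an irreducible component $C$ dominating $C'$. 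Then $f(C) = C'$, $\deg(f|_{C}) > 0$, and the projection formula combined with $f^{*}D \equiv dD$ yields
\begin{align}
\deg(f|_{C}) \cdot (D \cdot C') = D \cdot f_{*}C = f^{*}D \cdot C = d\,(D \cdot C) = 0,
\end{align}
so $D \cdot C' = 0$.

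Since every irreducible curve in $f(F)$ is contracted by $\f$, the morphism $\f|_{f(F)}$ is constant on each irreducible component of $f(F)$, and the connectedness of $f(F)$ then forces $\f(f(F))$ to be a single point. Consequently $\pi(f(F)) \subset \phi^{-1}(\mathrm{pt})$ is a finite connected set, hence a single point, which is exactly the hypothesis needed to invoke the rigidity statement and produce the desired $g \colon Y \to Y$. The only mildly delicate technical point is the existence of an irreducible curve $C \subset F$ mapping onto a prescribed $C' \subset f(F)$; this is handled cleanly by the finiteness of $f$, which preserves dimensions on preimages of subvarieties. Everything else is formal from the projection formula together with the universal property of the Stein factorization, so I do not anticipate a major obstacle beyond this bookkeeping.
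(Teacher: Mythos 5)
Your proposal is correct and follows the same route as the paper: identify the $\pi$-contracted curves as exactly those with $(D\cdot C)=0$, use $f^{*}D \equiv dD$ and the projection formula (together with finiteness of $f$) to show this class of curves is preserved, conclude that $f$ preserves fibers of $\pi$, and then invoke rigidity/Zariski main theorem to descend $f$ to a morphism $g$ on $Y$. The paper compresses all of this into two sentences; you have merely unpacked the ``therefore $f$ preserves the fibers of $\pi$'' step and made the application of the universal property explicit.
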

\begin{proof}
An irreducible curve $C \subset X$ is mapped to a point by $\pi$ if and only if $(D \cdot C)=0$. 
Therefore $f$ preserves the fibers of $\pi$. Use the Zariski main theorem to conclude the proof.
\end{proof}

Every surjective self-morphism on a projective bundle descends to a
surjective self-morphism on the base after replacing with some iterate.

\begin{prop}
Let $Y$ be a normal projective variety over $\QQ$ and 
let $\E$ be a locally free $\O_{Y}$-module of finite rank.
Let $X = \P(\E) := \Proj \oplus_{n \geq 0}\Sym^{n}\E$
be the projective bundle over $Y$ with structure morphism $\pi \colon X \longrightarrow Y$. 
Let $f \colon X \longrightarrow X$ be a surjective self-morphism.
Then there is a positive integer $m$ and a surjective self-morphism $g \colon Y \longrightarrow Y$
such that
 \[
\xymatrix{
X \ar[r]^{f^{m}} \ar[d]_{\pi}& X \ar[d]^{\pi} \\
Y \ar[r]_{g}& Y
}
\]
commutes.
\end{prop}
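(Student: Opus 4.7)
My plan is to show that, after replacing $f$ by a suitable iterate $f^m$, the composition $\pi\circ f^m\colon X\to Y$ is constant on each fiber of $\pi$. Granting this, the universal property of the proper flat surjection $\pi$ with connected fibers (a categorical quotient in the category of varieties) produces a unique morphism $g\colon Y\to Y$ with $\pi\circ f^m=g\circ\pi$, which is surjective because $\pi\circ f^m$ is.

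Let $r=\rank\E$, which I may assume is at least $2$ (the case $r=1$ is trivial), and let $F_0\in N_1(X)_\R$ denote the class of a line contained in a fiber $\P^{r-1}$ of $\pi$; this class is independent of all choices by algebraic equivalence. Under the intersection pairing one checks that $\pi^{*}N^1(Y)_\R\subset N^1(X)_\R$ coincides with $[F_0]^\perp$, using the splitting $N^1(X)_\R=\pi^{*}N^1(Y)_\R\oplus\R\xi$ and the relations $(\pi^{*}D\cdot F_0)=0$, $(\xi\cdot F_0)=1$. Hence the invariance $(f^m)^{*}\pi^{*}N^1(Y)_\R\subseteq\pi^{*}N^1(Y)_\R$ is equivalent to $(f^m)_{*}$ preserving the ray $R_0:=\R_{\geq 0}[F_0]$. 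Granting this invariance, the geometric descent is immediate: for any ample $A$ on $Y$, $(f^m)^{*}\pi^{*}A\equiv\pi^{*}A'$ for some $A'\in N^1(Y)_\R$; for an irreducible curve $C\subset\pi^{-1}(y)$ we have $(\pi^{*}A'\cdot C)=0$, so $((f^m)^{*}\pi^{*}A\cdot C)=(A\cdot\pi_{*}f^m_{*}C)=0$, and ampleness of $A$ together with effectivity of $\pi_{*}f^m_{*}C$ forces $\pi(f^m(C))$ to be a point. Since $\P^{r-1}$ is covered by lines, $\pi\circ f^m$ is constant on each fiber of $\pi$.

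The heart of the argument is thus the $f_{*}$-invariance of $R_0$ after passing to an iterate. Since $\rho(X/Y)=1$, the ray $R_0$ is an extremal ray of $\overline{NE}(X)$ contracted by $\pi$, and it is $K_X$-negative because $(K_X\cdot F_0)=-r$. The pushforward $f_{*}$ is a linear automorphism of $N_1(X)_\R$ preserving $\overline{NE}(X)$, hence permutes extremal rays; using $f^{*}K_X\equiv K_X-R_f$ with $R_f$ the effective ramification divisor and $(R_f\cdot F_0)\geq 0$ (the fiber-lines form a covering family of $X$), one has $(K_X\cdot f_{*}F_0)\leq -r<0$, so $f_{*}$ sends $K_X$-negative extremal rays to $K_X$-negative extremal rays and further preserves the subclass of fiber-type rays (those generated by moving curves). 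Finiteness of the orbit $\{f_{*}^{n}R_0\}_n$ then yields periodicity and the desired fixity after an iterate.

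The main obstacle is rigorously establishing finiteness of that orbit. The Cone Theorem gives only local finiteness of $K_X$-negative extremal rays in bounded regions, while the $|K_X|$-degree $-(K_X\cdot f^n_{*}F_0)=r+\sum_{i<n}(R_f\cdot f^i_{*}F_0)$ may grow unboundedly with $n$, so a naive application fails. One must supplement the Cone Theorem with finiteness of fiber-type Mori contractions on $X$ — a standard consequence of the rigidity of the projective bundle structure when $X$ is smooth of Picard number $\rho(Y)+1$. For the case of a merely normal base $Y$ (so that $X$ need not be klt), the natural move is to pass to an equivariant resolution and transport the conclusion back using \cref{prop:semconj-by-gene-fin}.
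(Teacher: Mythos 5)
Your outline — show that $f_*^m$ fixes the ray $R_0$ of fiber lines, then descend via the rigidity lemma — is a reasonable way to organize the problem, and your reduction to ray invariance is correct. But the proof is incomplete exactly where you flag it: you never establish that the orbit $\{f_*^n R_0\}_n$ is finite. The Cone Theorem only prevents accumulation of $K_X$-negative extremal rays away from the hyperplane $K_X^\perp$, and as you compute, the $(-K_X)$-degree of $f_*^n F_0$ grows (potentially without bound), so those rays could form an infinite set drifting toward $K_X^\perp$. The phrase ``finiteness of fiber-type Mori contractions on $X$ — a standard consequence of the rigidity of the projective bundle structure'' is asserted, not proved; it is not a standard citable fact, and it is precisely the content you need. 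So the argument as written has a genuine hole at its center.

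The proposed repair for the general normal base is also flawed in several independent ways. First, a surjective endomorphism of degree $\geq 2$ does not in general lift to a resolution of singularities, so ``equivariant resolution'' is not available here (it is for automorphisms, but that is not the setting). Second, even if a lift existed, the resolved variety is no longer a projective bundle over $Y$, so the statement you want to prove on the resolution is not even formulated there. Third, \cref{prop:semconj-by-gene-fin} is a statement about equality of arithmetic degrees of Zariski dense orbits under a generically finite semiconjugacy — it says nothing about whether $\pi\circ f^m$ factors through $\pi$, and cannot be used to ``transport'' a descent of the morphism. Finally, note that the Cone Theorem apparatus you invoke (extremal rays, $K_X$-negative rays, Mori fiber spaces) requires mild singularities, whereas the proposition only assumes $Y$ normal, so $X=\P(\E)$ need not be klt or $\Q$-factorial; the paper's argument (deferred to the discussion before \cite[Theorem 2]{Am03}) is designed to work in that generality without running MMP, which is a genuinely different route than the one you take.
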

\begin{proof}
Use the same argument before \cite[Theorem 2]{Am03}.
\end{proof}

Albanese morphism is compatible with self-morphisms because of the universal property.

\begin{thm}[{\cite[Proposition 3.7]{LM21},\cite[Proposition 5.1]{CLO}}]\label{thm:alb-surj}
Let $X$ be a normal projective variety over $\QQ$.
Let $ \alpha \colon X \longrightarrow A$ be the albanese morphism.

\begin{enumerate}
\item
Let $f \colon X \longrightarrow X$ be a self-morphism.
Then $f$ induces a self-morphism $g \colon A \longrightarrow A$ such that 
$ \alpha  \circ f = g \circ \alpha $.
Moreover, if $f$ has Zariski dense orbit, then $ \alpha$ is surjective.

\item
Suppose $X$ is smooth.
Let $f \colon X \dashrightarrow X$ be a rational map.
Then $f$ induces a self-morphism $g \colon A \longrightarrow A$ such that 
$ \alpha  \circ f = g \circ \alpha $.
Moreover, if $f$ has Zariski dense orbit, then $ \alpha$ is surjective.
\end{enumerate}
\end{thm}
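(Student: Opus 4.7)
The plan is to leverage two defining properties of the Albanese morphism $\alpha \colon X \to A$: its \emph{universal property} --- every morphism $X \to B$ into an abelian variety $B$ factors through $\alpha$ uniquely up to translation --- and its \emph{generation property} --- $\alpha(X)$ is not contained in any proper translate of a proper abelian subvariety of $A$.

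For the construction of $g$ in part (1), I would apply the universal property directly to $\alpha \circ f \colon X \to A$: it yields a morphism $g \colon A \to A$, automatically of the form $g(y) = \phi(y) + c$ for some group homomorphism $\phi \colon A \to A$ and constant $c \in A$, with $g \circ \alpha = \alpha \circ f$. For part (2), $\alpha \circ f$ is a priori only a rational map, but since $X$ is smooth and $A$ contains no rational curves, it extends uniquely to a morphism $X \to A$ (the principle recalled in \cref{rmk:ksc-abvar-all}); the universal property then produces $g$ exactly as in (1).

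For the surjectivity of $\alpha$ under the dense orbit hypothesis, I would first observe that $f$ must be dominant: otherwise every forward orbit of $f$ would be trapped in the proper closed set $\{x\} \cup \overline{f(X \setminus I_f)}$ of the irreducible variety $X$. By properness of $\alpha$, the image $Y := \alpha(X)$ is closed in $A$, is $g$-invariant, and contains the Zariski dense $g$-orbit $\alpha(O_f(x)) = \{g^n(\alpha(x))\}_{n \geq 0}$. To conclude $Y = A$, I would argue by contradiction. A Zariski dense orbit forces $\kappa(X) \leq 0$ by the earlier proposition on positive Kodaira dimension. When $\kappa(X) = 0$, Kawamata's theorem that the Albanese of a smooth projective variety of Kodaira dimension zero is surjective with connected fibers gives $Y = A$ immediately (applied to a resolution when $X$ is only normal). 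When $\kappa(X) = -\infty$, I would exploit the explicit form $g = \phi + c$, the $g$-invariance of $Y$, and the generation property of $\alpha(X)$ together --- for instance, by passing to the quotient of $A$ by the connected stabilizer $\mathrm{Stab}^0(Y) \subseteq A$ to reduce to the case of trivial stabilizer, and then pushing the dense $g$-orbit into the quotient to derive a contradiction with $Y \subsetneq A$.

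\textbf{Main obstacle.} The existence of $g$ is a routine application of the universal property; the delicate part is surjectivity in the case $\kappa(X) = -\infty$. Ruling out proper $g$-invariant closed subvarieties $Y \subsetneq A$ that still generate $A$ and host a Zariski dense $g$-orbit requires a careful geometric analysis of iterated affine self-maps of abelian varieties and their invariant subvarieties, and the extraction of a genuine contradiction from the density of the orbit --- this is where the interplay between the universal property, the generation property, and the dense orbit has to be balanced precisely.
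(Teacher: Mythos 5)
Your construction of $g$ via the universal property of the Albanese, and the extension of $\alpha \circ f$ to a morphism in the smooth case using the absence of rational curves in $A$, are both correct. The observation that a dense orbit forces $f$ (hence $g$) to be dominant is also needed and correct, and the $\kappa(X)=0$ branch via Kawamata's theorem is fine.

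The genuine gap is exactly the one you flag, and the case split on $\kappa(X)$ is not the right organizing principle for it. When $\kappa(X) = -\infty$ the image $Y := \alpha(X)$ can still be of general type as a subvariety of $A$ (think of a ruled surface over a curve $C$ of genus $\geq 2$, with $Y = C$ inside $\mathrm{Jac}(C)$), so nothing about $\kappa(X)$ constrains $Y$ directly, and the ``quotient by the stabilizer and derive a contradiction'' step as written does not actually produce one. The missing ingredient is Ueno's structure theorem for subvarieties of abelian varieties: for an irreducible closed $Y \subset A$, writing $B$ for the identity component of $\{a \in A : Y + a = Y\}$, the image $Y/B \hookrightarrow A/B$ is either a single point (equivalently $Y$ is a coset of $B$) or a positive-dimensional variety of general type. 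With $g = \phi + c$ one checks $\phi(B) \subseteq B$, so $g$ descends to an affine endomorphism $\bar{g}$ of $A/B$ carrying $Y/B$ to itself, and the dense $g$-orbit in $Y$ pushes forward to a dense $\bar{g}$-orbit in $Y/B$. If $Y/B$ is a point, then $Y$ is a coset of $B$, and the generation property of the Albanese forces $B = A$, i.e.\ $Y = A$. If instead $Y/B$ is positive-dimensional of general type, then the surjective self-morphism $\bar{g}|_{Y/B}$ has finite order (surjective self-maps of general type varieties are automorphisms and their automorphism groups are finite), which is incompatible with an infinite, a fortiori dense, orbit. This one argument covers all cases uniformly, so the appeal to Kawamata and the dichotomy on $\kappa(X)$ can be dropped entirely; it is the mechanism behind the cited Proposition 5.1 of Chen--Lin--Oguiso.
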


\begin{rmk}
To deduce the last part of \cref{thm:alb-surj}(2) from \cite[Proposition 5.1]{CLO},
we have to confirm that a Zariski dense set of $\QQ$-points is Zariski dense in $X_{\C}$.
We can prove this by using, for example, the openness of the morphism $X_{\C} \longrightarrow X$,
which is because the morphism $\Spec \C \longrightarrow \Spec \QQ$ is universally open 
(cf.\ \cite[\href{https://stacks.math.columbia.edu/tag/0383}{Tag 0383}]{stacks-project}).
\end{rmk}

\section{Toward rational maps}

We have seen that there are many partial results and potential strategies to prove Kawaguchi-Silverman conjecture 
for surjective self-morphisms on projective varieties. 
However, the situation for rational maps is markedly different. 
In certain special cases, there are nice canonical height functions as discussed in \cref{sec:canonical-height}. 
Yet, it is unclear if we can expect the existence of such nice functions for general rational maps. 
The effectiveness of algebro-geometric methods arises from the fact that height functions have functorial property with respect to morphisms. 
When our self-map is merely a rational map, and not a morphism defined everywhere on a projective variety, 
we usually cannot interpret geometric relationships between divisors directly as equations of height functions. 
There are several highly non-trivial results for rational maps that confirm the conjecture 
e.g.\ \cref{thm:ksc-irregular,thm:endo-on-A2-JW,thm:reg-affine-auto,thm:ksc-monomial-maps}.
In this section, we introduce some recent works concerning rational maps.

\subsection{Self-morphisms on affine surfaces}

Among rational maps, those which can be regarded as self-morphisms on quasi-projective varieties are relatively more tractable.
In \cite{JSXZ21}, Jia, Shibata, Xie, and Zhang studied Kawaguchi-Silverman conjecture and Zariski dense orbit conjecture 
for surjective self-morphisms on quasi-projective varieties.
Studying the structure of self-maps and the varieties, they proved:

\begin{thm}[{\cite[Theorem 1.8, Theorem 1.9]{JSXZ21}}]
Let $X$ be a smooth affine surface over $\QQ$ and let $f \colon X \longrightarrow X$ be a finite surjective self-morphism.
Let $ \overline{\kappa}(X)$ be the log Kodaira dimension.
\begin{enumerate}
\item If $\overline{\kappa}(X) \geq 0$ and $\deg f \geq 2$, then \cref{a=dconj} holds for $f$.
\item If $\overline{\kappa}(X) = -\infty$ and $\deg f = 1$, then \cref{a=dconj} holds for $f$.
\item If the \'etale fundamental group $\pi_{1}^{et}(X)$ is infinite, $\deg f \geq 2$, and $X \not\simeq \A^{1} \times {\mathbb{G}}_{{\rm m}}$,
then \cref{a=dconj} holds for $f$.
\end{enumerate}
\end{thm}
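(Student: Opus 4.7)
The plan is to combine the classification of smooth affine surfaces by logarithmic Kodaira dimension $\overline{\kappa}(X)$ with structural results on their finite endomorphisms, and in each of the three cases reduce KSC for $f$ to a case already established earlier in the survey. The unifying device is, after replacing $f$ by a suitable iterate $f^m$ and possibly modifying $X$ by an $f$-equivariant blow-up, to take an SNC log-compactification $X \hookrightarrow \overline{X}$ with boundary $\partial X$ and either promote $f$ to a surjective endomorphism of a projective variety or produce an $f$-equivariant fibration over a lower-dimensional base.

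For part (1), with $\overline{\kappa}(X) \geq 0$ and $\deg f \geq 2$, Iitaka's logarithmic easy-addition for finite endomorphisms forces $\overline{\kappa}(X) \leq 1$. If $\overline{\kappa}(X) = 1$, the log Iitaka fibration $\pi \colon X \to C$ is $f^m$-equivariant for some $m \geq 1$ over a smooth affine curve $C$; since fibers are one-dimensional one reads off $\delta_1(f^m) = \delta_1(g) \cdot (\text{fiber degree})$, so KSC for $f$ reduces via \cref{prop:ad-and-semiconj} to the curve case, which is covered by \cref{thm:ksc-pol}. If $\overline{\kappa}(X) = 0$, the quasi-Albanese map $X \to A$ to a semi-abelian surface is generically finite by the logarithmic version of the Kawamata--Iitaka classification, $f$ descends to a surjective endomorphism of $A$, and the conclusion follows from \cref{thm:ksc-semiabvar} combined with \cref{prop:semconj-by-gene-fin}.

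For part (2), where $\overline{\kappa}(X) = -\infty$ and $f$ is an automorphism, the Miyanishi--Sugie classification places $X$ in a short list: it is either $\A^2$, or it admits an $\A^1$- or $\Gm$-fibration. One takes an $f$-equivariant SNC compactification so that $f$ becomes a birational self-map of a smooth projective surface, then applies the canonical-height construction for surface automorphisms from \cref{thm:autos}; the remaining cases reduce either to $\A^2$ (Kawaguchi, \cref{rmk:ksc-A2}) or to a toric/semi-abelian setup via the associated fibration. For part (3), infinite $\pi_1^{et}(X)$ furnishes an infinite tower of finite étale covers $\tilde{X}_n \to X$; after pulling $f$ back to a cover on which it lifts (here $\deg f \geq 2$ is used to force $f$ to be compatible with sufficiently many layers of the tower), a classification argument identifies $\tilde{X}_n$ as being dominated by, or generically finite over, a semi-abelian surface, so that KSC follows from \cref{thm:ksc-semiabvar,thm:ksc-monomial-maps} via \cref{prop:semconj-by-gene-fin}; the excluded case $X \simeq \A^1 \times \Gm$ is precisely the place where the lift-to-a-semi-abelian strategy breaks down.

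The main obstacle I anticipate is producing the equivariant fibrations and equivariant covers from mere existence statements: in part (3) one must actually lift $f$ to a genuinely finite étale cover and rule out pathological lifts, and in part (1) one must ensure the quasi-Albanese descent preserves Zariski dense orbits while avoiding the Case~TIR phenomenon of \cref{CaseTIR}. The exclusion of $\A^1 \times \Gm$ in (3) reflects exactly this difficulty: its fundamental group is infinite yet its self-morphisms do not in general descend to a semi-abelian variety of positive dimension, so a separate ad hoc argument (or its exclusion) is essential.
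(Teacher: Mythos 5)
The survey states this theorem without proof, citing \cite{JSXZ21} only, so there is no in-paper argument for direct comparison; I will assess the proposal on its own merits. Your plan has the right overall shape (stratify by $\overline{\kappa}(X)$, build $f$-equivariant fibrations or quasi-Albanese maps, and reduce to cases treated earlier in the survey), and the first observation — that $\deg f\geq 2$ forces $\overline{\kappa}(X)\leq 1$ by Iitaka's theorem on log general type — is correct. But the key reduction steps do not actually close the cases.

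The most serious gap is in part (1) with $\overline{\kappa}(X)=1$. The formula $\delta_1(f^m)=\delta_1(g)\cdot(\text{fiber degree})$ is wrong: for a finite endomorphism of a surface fibering over a curve, $\delta_1$ is governed by a maximum of the base and fiber degrees, not their product (the product is the topological degree $\delta_2$). More importantly, \cref{prop:ad-and-semiconj} only yields $\underline{\alpha}_f(x)\geq\underline{\alpha}_g(\pi(x))=\delta_1(g)$, which suffices only when $\delta_1(f)=\delta_1(g)$. The genuinely hard subcase is precisely when the fiber degree exceeds the base degree (e.g.\ $g$ an automorphism of $C$ and $f$ of high fiber degree), and there your reduction to the curve proves nothing about $\underline{\alpha}_f(x)$. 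Some ``vertical'' argument on the $\Gm$-fibers is indispensable and is where the substance of the result lies. In part (2), an automorphism of an affine surface compactifies only to a \emph{birational} self-map of the projective model, not an automorphism, so \cref{thm:autos} does not apply; the positivity of the Jonsson--Reschke canonical height for birational surface maps is exactly what is not known (cf.\ the remark following \cref{thm:autos}). Also Miyanishi--Sugie produces $\A^1$-fibrations, not ``$\A^1$- or $\Gm$-fibrations.'' Finally, in part (3) the statements about lifting $f$ through an étale tower and identifying $\tilde X_n$ as generically finite over a semi-abelian surface are left as black boxes; making them precise is where the proof must actually be done, and the exclusion of $\A^1\times\Gm$ needs to emerge from that analysis rather than be asserted.
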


\subsection{Cohomologically hyperbolic maps}

Recently, Wang extended the idea of the proof of positivity of canonical height functions for automorphisms on surfaces
and adapt it to a class of rational maps called cohomologically hyperbolic maps \cite{Wa22}.
In this generalization, he did not take the approach to construct a canonical height function, 
and only proved that the height sequence "$h_{H}(f^{n}(x))$" satisfies a certain inequality.

\begin{defn}\label{def:coh-hyp}
Let $X$ be a projective variety over $\QQ$.
Let $p \in \Z$ with $1 \leq p \leq \dim X$.
A dominant rational map $f \colon X \dashrightarrow X$ is said to be $p$-cohomologically hyperbolic if 
$\d_{p}(f) > \d_{i}(f)$ for all $i \in \{1, \dots ,\dim X\} \setminus \{p\}$.
\end{defn}

This class of rational maps were first introduced by Guedj \cite{Gu10} as a cohomological analogue of hyperbolicity.
Wang proved:

\begin{thm}[{\cite[Theorem 1.4]{Wa22}}]\label{thm:wang}
Let $X$ be a smooth projective surface over $\QQ$.
Let $f \colon X \dashrightarrow X$ be a $1$-cohomologically hyperbolic dominant rational map (i.e. $\d_{1}(f) > \d_{2}(f)$).
Assume there is an open dense subset $W \subset X \setminus I_{f}$ with $f(W) \subset W$ such that either 
\begin{enumerate}
\item
$f|_{W} \colon W \longrightarrow W$ is \'etale, or

\item
$W = \A^{2}_{\QQ}$.

\end{enumerate}
Then \cref{a=dconj} holds for $f$.
\end{thm}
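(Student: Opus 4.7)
The plan is to extend Kawaguchi's two-eigendivisor strategy for surface automorphisms (see \cref{auto-surf-hk}) to the setting of a $1$-cohomologically hyperbolic rational map $f$, handling the loss of invertibility by exploiting the étale (or $W = \A^{2}$) hypothesis to construct a usable ``backward'' companion to the forward canonical height.

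First, I would pass to a model on which $f$ is algebraically stable. For a $1$-cohomologically hyperbolic rational self-map of a smooth projective surface, a suitable version of the Diller--Favre / Boucksom--Favre--Jonsson construction produces a smooth projective birational model $\pi \colon X' \longrightarrow X$ on which the induced map $f' \colon X' \dashrightarrow X'$ satisfies $(f'^{n})^{*} = (f'^{*})^{n}$ on $N^{1}(X')_{\R}$. Since $f^{*}$ preserves $\Nef(X')$, a Perron--Frobenius argument produces nef classes $\theta_{+}, \theta_{-} \in N^{1}(X')_{\R}$ with $f'^{*}\theta_{+} \equiv \d_{1}(f)\theta_{+}$ and $f'_{*}\theta_{-} \equiv \d_{1}(f)\theta_{-}$. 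Cohomological hyperbolicity $\d_{1}(f) > \d_{2}(f)$ is exactly the condition that forces $(\theta_{+} \cdot \theta_{-}) > 0$, so $\theta_{+} + \theta_{-}$ is big. After a further small modification, these numerical eigenclasses may be upgraded to honest $\R$-linear equivalences $f'^{*}\theta_{+} \sim_{\R} \d_{1}(f)\theta_{+}$ and $f'_{*}\theta_{-} \sim_{\R} \d_{1}(f)\theta_{-}$.

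Next, I would build two height-like functions on the orbit $O_{f'}(x)$. The forward side is the standard telescoping canonical height
\begin{align}
\hat h_{+}(x) = \lim_{n \to \infty} \frac{h_{\theta_{+}}(f'^{\,n}(x))}{\d_{1}(f)^{n}},
\end{align}
which by algebraic stability satisfies $\hat h_{+} \circ f' = \d_{1}(f)\,\hat h_{+}$ and $\hat h_{+} = h_{\theta_{+}} + O(\sqrt{h_{H}^{+}})$ on $X'_{f'}(\QQ)$. For the backward side, since $f$ is not invertible, I would instead use the weighted preimage averages
\begin{align}
\hat h_{-}(x) = \lim_{n \to \infty} \frac{1}{\d_{1}(f)^{n}} \sum_{y \in f^{-n}(x) \cap W} h_{\theta_{-}}(y),
\end{align}
where the sum is taken over preimages that remain inside $W$. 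The étale hypothesis on $W$ guarantees that the preimage multiplicities are constantly $\d_{2}(f)^{n}$ and that $f$ contracts no divisor meeting $W$, so that $f'_{*}\theta_{-} \sim_{\R} \d_{1}(f)\theta_{-}$ gives a clean telescoping estimate and the limit exists with $\hat h_{-} = h_{\theta_{-}} + O(\sqrt{h_{H}^{+}})$; in the $W = \A^{2}$ case, one replaces this with a boundary-divisor / valuative-tree analysis at infinity (Favre--Jonsson) which plays the same role. Adding the two, $\hat h_{+} + \hat h_{-} = h_{\theta_{+} + \theta_{-}} + O(\sqrt{h_{H}^{+}})$, and bigness of $\theta_{+} + \theta_{-}$ combined with \cref{prop:weak-Northcott}(2) implies that $\{\hat h_{+} + \hat h_{-} \leq 0\}$ cannot be Zariski dense on any $L$-points with $[L:\Q]$ bounded. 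Running the template described in \cref{sec:canonical-height}, for $x$ with Zariski dense $f$-orbit we find $\hat h_{+}(x) > 0$ (since $\hat h_{-}(f^{n}(x))$ stays bounded in absolute value by $h_{H}^{+}$-comparison while $\hat h_{+}(f^{n}(x)) = \d_{1}(f)^{n}\hat h_{+}(x)$), and combined with the fundamental inequality \cref{thm:fund-ineq} this gives $\alpha_{f}(x) = \d_{1}(f)$.

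The main obstacle is the construction of $\hat h_{-}$. One needs the limit to exist, to be comparable to $h_{\theta_{-}}$ up to $O(\sqrt{h_{H}^{+}})$, and to stay under control along the forward orbit so that the bigness of $\theta_{+} + \theta_{-}$ genuinely forces $\hat h_{+}(x) > 0$. The étale assumption is tailored exactly to exclude the two pathologies that would break the preimage telescoping, namely ramification and divisor contraction inside $W$; without it the pushforward eigenequation fails to translate into a functional equation for heights. The $W = \A^{2}$ case demands a separate compactification argument in which one arranges the boundary at infinity to be totally $f^{-1}$-invariant so that the same telescoping goes through. Unifying both cases presumably amounts to choosing an $f$-equivariant compactification in which divisors outside $W$ are permuted and all ramification lies over the complement of $W$.
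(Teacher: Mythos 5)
Your proposal reconstructs the Jonsson--Reschke two-eigenclass approach rather than Wang's, and it runs into exactly the obstruction that the survey flags as open. The paper states explicitly that Wang \emph{did not} construct a canonical height function: he works directly with the height sequence $h_{H}(f^{n}(x))$ and proves a certain inequality along the orbit. The canonical-height route you describe---forward height $\hat h_{+}$ plus a backward height $\hat h_{-}$ whose sum is compared to $h_{\theta_{+}+\theta_{-}}$---was carried out for birational surface maps by Jonsson and Reschke, and, as noted in the remark following \cref{thm:autos}, they were unable to show that their canonical height is not identically zero. That is the whole game. Your proposal asserts the comparison $\hat h_{+} + \hat h_{-} = h_{\theta_{+}+\theta_{-}} + O(\sqrt{h_{H}^{+}})$ and then invokes \cref{prop:weak-Northcott}, but for a genuine rational map this comparison is not available: the functional equations $\hat h_{\pm} \circ f = \lambda_{\pm}\hat h_{\pm}$ and the global comparison to $h_{\theta_{\pm}}$ are obstructed by indeterminacy and contracted divisors, and the backward telescoping you write uses the pushforward eigen-relation $f_{*}\theta_{-}\sim_{\R}\d_{1}(f)\theta_{-}$, which translates into a height relation summed over \emph{all} preimages; restricting the sum to preimages inside $W$ breaks the telescoping whenever a preimage escapes $W$, and nothing in the \'etale hypothesis prevents escape.

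Moreover, you have misidentified the role of the hypotheses (1) and (2). As the survey records, Wang's proof requires the dynamical Mordell--Lang conjecture: the \'etale case is covered by Bell--Ghioca--Tucker, and the $W=\A^{2}$ case by Xie's theorem. These hypotheses are there so that a Zariski dense orbit is in fact generic (meets any proper closed subset only finitely often), which is needed to control the orbit's intersection with the bad locus along which the height estimate degrades. They are not there to make preimage multiplicities constant or to exclude ramification; indeed, that interpretation would not explain why $W=\A^{2}$ works, since self-morphisms of $\A^{2}$ are certainly not \'etale in general. This is also why the paper's \cref{thm:cohhyp} replaces the dense-orbit hypothesis with the stronger generic-orbit hypothesis: once genericity is assumed, DML is no longer needed and the argument goes through in all dimensions. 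Your write-up does not mention dynamical Mordell--Lang at all, so even setting aside the canonical-height gap, the proposed argument cannot account for why assumptions (1) and (2) appear.
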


\begin{rmk}
In \cite[Theorem 1.4]{Wa22}, $f|_{W} \colon W \longrightarrow W$ is assumed to be surjective,
but it is not necessary.
\end{rmk}

\begin{rmk}\label{rmk:jwx-vs-wang}
The second case (i.e.\ affine space case) also follows from \cref{rmk:JWX}.
Jonsson-Wulcan-Xie's proof uses valuative trees as a key ingredient to prove the positivity of canonical height functions.
Wang's approach is completely different, but it actually uses valuative tree indirectly as the proof requires
dynamical Mordell-Lang conjecture for $\A^{2}_{\QQ}$, which is proven by Xie using valuative tree \cite{Xi17}.
\end{rmk}

Wang needs the assumptions (1)(2) in \cref{thm:wang} entirely because 
he uses dynamical Mordell-Lang conjecture.
If you assume that the orbit is generic and not merely Zariski dense,
then you can avoid this difficulty.
Recall that an orbit $O_{f}(x)$ is generic if it is infinite and its intersection with any proper closed subset
is finite. We remark that dynamical Mordell-Lang conjecture implies every Zariski dense orbit is generic.
In our joint work, Wang and the author proved the following higher dimensional generalization of \cref{thm:wang}.

\begin{thm}[{\cite[Theorem A, Corollary 1.8]{MW22}}]\label{thm:cohhyp}
Let $X$ be a smooth projective variety over $\QQ$.
Let $f \colon X \dashrightarrow X$ be a $p$-cohomologically hyperbolic map.
Let $x \in X_{f}(\QQ)$ be a point with generic $f$-orbit.
Then we have
\begin{align}
\underline{\alpha}_{f}(x) \geq \frac{\d_{p}(f)}{\d_{p-1}(f)}.
\end{align}
If $p=1$, then $ \alpha_{f}(x)$ exists and $ \alpha_{f}(x)= \d_{1}(f)$.
\end{thm}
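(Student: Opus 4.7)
The plan is to adapt the approach used by Wang in the surface case (\cref{thm:wang}) to higher dimensions, replacing the use of the dynamical Mordell--Lang theorem (which is available only in very restricted settings) by the much stronger hypothesis that the orbit of $x$ is generic. The strategy is to work on desingularized graphs of the iterates $f^n$ and to convert intersection-theoretic information there into pointwise height inequalities for $f^n(x)$.

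First, I would set up the standard graph resolution: for each $n \geq 1$ let $\pi_n, F_n \colon \Gamma_n \to X$ be a smooth projective resolution of $f^n$, and fix an ample divisor $H$ on $X$. Define the mixed intersection numbers $I_{i,n} := (F_n^* H^i \cdot \pi_n^* H^{\dim X - i})_{\Gamma_n}$. By the theory of mixed dynamical degrees \cite{DS05,Da20,Tr20}, $I_{i,n}^{1/n} \to \d_i(f)$; the $p$-cohomological hyperbolicity hypothesis therefore produces a clear exponential gap $I_{p,n}/I_{p-1,n}$ behaving like $(\d_p(f)/\d_{p-1}(f))^n$ up to subexponential factors.

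Second, I would use a Siu / Khovanskii--Teissier type inequality on $\Gamma_n$, applied to the nef classes $F_n^* H$ and $\pi_n^* H$, to produce an effective $\R$-divisor class on $\Gamma_n$ whose associated height, evaluated at the canonical lift $\tilde x \in \Gamma_n$ of $x$ (which exists because $x \in X_f(\QQ)$), is bounded below by roughly $(I_{p,n}/I_{p-1,n}) \cdot h_H^+(x)$ minus error terms supported on the exceptional locus of $\pi_n$ and on certain auxiliary subvarieties of $X$. The hypothesis that $O_f(x)$ is generic enters here crucially: since any proper closed subvariety $Z \subsetneq X$ meets $O_f(x)$ in only finitely many points, the iterates $f^n(x)$ eventually avoid the images of the bad loci, so the error terms become negligible in the $n$-th root limit. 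Translating via $h_{F_n^* H}(\tilde x) = h_H(f^n(x)) + O(1)$ yields, for all sufficiently large $n$,
\[
h_H^+(f^n(x)) \geq c_n \cdot \frac{I_{p,n}}{I_{p-1,n}},
\]
for some positive $c_n$ with $c_n^{1/n} \to 1$; taking $n$-th roots and passing to the limit inferior gives $\underline{\alpha}_f(x) \geq \d_p(f)/\d_{p-1}(f)$.

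Finally, specializing to $p=1$ yields $\underline{\alpha}_f(x) \geq \d_1(f)/\d_0(f) = \d_1(f)$, and combining with the universal upper bound $\overline{\alpha}_f(x) \leq \d_1(f)$ from \cref{thm:fund-ineq} forces the limit $\alpha_f(x)$ to exist and equal $\d_1(f)$. The hardest step will be the second one: turning intersection-theoretic data on the graphs $\Gamma_n$ into an honest pointwise height inequality at $f^n(x)$. Intersection theory controls only global averages of heights of cycles, whereas we need control at a single point, and the constants $c_n$ must be uniform enough that they do not decay exponentially. Making the genericity of the orbit interact quantitatively with an arithmetic B\'ezout-type argument -- and ensuring the exceptional-divisor error terms do not eat the gap $\d_p/\d_{p-1}$ -- is the main technical challenge.
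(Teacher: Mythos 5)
Your high-level framing---graph resolutions $\Gamma_n$, the convergence $I_{i,n}^{1/n}\to\delta_i(f)$ of the mixed degrees, a positivity statement on $\Gamma_n$ converted into a pointwise height inequality, and genericity to neutralize error terms on exceptional loci---is the right skeleton for the argument of \cite{MW22}, and the $p=1$ endgame via \cref{thm:fund-ineq} is correct. But two load-bearing steps are not right. The positivity mechanism does not give the right ratio: a Siu or Khovanskii--Teissier inequality applied to the nef classes $A=F_n^{*}H$ and $B=\pi_n^{*}H$ on the $d$-dimensional variety $\Gamma_n$ yields pseudo-effectivity of $A-\lambda B$ only for $\lambda$ on the order of $I_{d,n}/(d\,I_{d-1,n})$, a ratio of the \emph{top two} mixed degrees, whose $n$-th root tends to $\delta_d(f)/\delta_{d-1}(f)$. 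By log-concavity of the $\delta_i$ this is the smallest of all consecutive ratios; in the key case $p=1$ (so $\delta_1>\delta_2$) one has $\delta_d/\delta_{d-1}\le\delta_2/\delta_1<1<\delta_1$, so the Siu threshold does not even exceed $1$, let alone reach $\delta_1$. Extracting the $p$-versus-$(p-1)$ ratio is exactly the nontrivial technical core of \cite{MW22}, who prove a tailor-made height-expansion inequality for the sequence $h_H^{+}(f^n(x))$ rather than quoting a known pseudo-effectivity criterion, so this step cannot be dismissed as ``Siu/KT type''.

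Second, the order of limits is off, and as a result genericity is doing no work. In your sketch the bad locus $Z_n\subset X$ (the image of the support of the auxiliary effective class on $\Gamma_n$) varies with $n$, and your estimate requires $f^n(x)\notin Z_n$ for all large $n$; but genericity of $O_f(x)$ only says that for each \emph{fixed} proper closed $Z\subsetneq X$ the set $\{m:f^m(x)\in Z\}$ is finite, and gives no control over a moving family $Z_n$. The necessary fix---and the structure actually used in \cite{MW22}---is to freeze the geometry first: given $\varepsilon>0$, choose one $N$ large enough that the intersection-theoretic ratio on $\Gamma_N$ exceeds $(\delta_p/\delta_{p-1}-\varepsilon)^N$; produce a single closed subset $Z_N$, a constant $C_N$, and a one-step inequality $h_H^{+}(f^N(y))\ge\lambda_N\,h_H^{+}(y)-C_N$ valid for $y\notin Z_N$; then iterate the fixed map $f^N$ along a tail of the orbit, which by genericity eventually stays off $Z_N$, to get $\underline{\alpha}_{f^N}(x)\ge\lambda_N$ and hence $\underline{\alpha}_f(x)=\underline{\alpha}_{f^N}(x)^{1/N}\ge\delta_p/\delta_{p-1}-\varepsilon$. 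Without this ``fix $N$, iterate $f^N$, then $\varepsilon\to 0$'' architecture, the subexponential factors $c_n$ you postulate are unjustified and the genericity hypothesis is invoked for something it does not provide.
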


Note that if $f$ is a birational self-map on a smooth projective surface,
then $1$-cohomological hyperbolicity is equivalent to $\d_{1}(f) > 1$.
For such a case, \cref{a=dconj} with the stronger assumption that $O_{f}(x)$ is generic
follows from \cref{thm:cohhyp}.

\begin{rmk}\label{rmk:cohhyp-genorb}
As an application of some properties of arithmetic degrees and dynamical degrees,
we proved that $1$-cohomologically hyperbolic map has points with generic orbits \cite[Theorem B]{MW22}.
\end{rmk}

\section{Related topics}

There are several works concerning arithmetic degrees.

Kawaguchi-Silverman conjecture asks if a point with dense orbit 
has arithmetic degree equal to dynamical degree, but it is natural to ask
if there exists at least one point $x$ that satisfies the equality $ \alpha_{f}(x) = \d_{1}(f)$.
This is proven for surjective self-morphisms on projective varieties \cite[Lemma 9.1,Corollary 9.3]{MSS18a}
(in this paper, the variety is assumed smooth, but the same proof works for singular varieties).
By the following works by Sano and Shibata, it is proven that such points are Zariski dense over $\QQ$
and over number fields under some additional assumptions \cite{SS23,SS21}.
For rational maps, it is not known in general that there is at least one point $x$ with $ \alpha_{f}(x) = \d_{1}(f)$.
There are some results to this direction \cite[Theorem 3]{KS14}, \cite[Theorem B, Proposition 4.6]{MW22}.

There are several works studying which values arithmetic degrees can take.
In general, the set $\{ \alpha_{f}(x) \mid x \in X_{f}(\QQ) \}$ of arithmetic degrees can be infinite (\cref{thm:inf-set-of-ad}), 
but for surjective self-morphisms on normal projective varieties, it is finite and described by the eigenvalues of
$f^{*} \colon N^{1}(X) \longrightarrow N^{1}(X)$ (\cref{thm:ad-of-mor}).
Lin determined the set of arithmetic degrees of monomial maps on projective toric varieties \cite[Theorem A]{Li19}.
In \cite{MS20}, we determined the set of arithmetic degrees of surjective self-morphisms on semi-abelian varieties.
For a surjective self-morphism on a normal projective variety $f \colon X \longrightarrow X$,
members of the set $ \{ |\mu| \mid \text{$\mu$ is an eigenvalue of $f^{*}$ with $|\mu|>1$} \}$ are candidates of 
arithmetic degrees. In \cite{Na22}, Nasserden studies the difference of this set and the set of arithmetic degrees ($\neq 1$).
For surjective self-morphisms on $\Q$-factorial toric varieties,
he proved that all such values can be realized as arithmetic degrees.
Also he constructed examples of self-morphisms on abelian varieties such that 
some of such candidate values are not realized as arithmetic degrees.

For surjective self-morphism on a projective variety,
we can observe that the set of points $x$ with $ \alpha_{f}(x) < \d_{1}(f)$ is small.
More precisely, it seems that the set of points $x$ with $ \alpha_{f}(x) < \d_{1}(f)$ and defined over number fields of bounded degree 
is not Zariski dense.
This is stated as ``small Arithmetic Non-density conjecture (sAND)'' in \cite[Conjecture 1.3]{MMSZ23}
and confirmed for several cases. See \cite{Shi19} and \cite[Theorem 1.7]{Na22} for related results.
Note that sAND implies Kawaguchi-Silverman conjecture.
It is also worth mentioning that we cannot expect the same statement for rational maps \cite[Remark 1.4(6)]{MMSZ23}.

Kawaguchi-Silverman conjecture compares arithmetic degree $ \alpha_{f}(x)$ 
with the first dynamical degree $\d_{1}(f)$ of $f$.
It is natural to ask what the arithmetic counter part of higher dynamical degrees $\d_{p}(f)$ is.
Dang, Ghioca, Hu, Lesieutre, and Satriano defined higher arithmetic degrees using Arakelov theory 
and extend Kawaguchi-Silverman conjecture to that setting \cite{DGHLS22}.
In their formulation, higher arithmetic degrees are defined for subvarieties (not points)
and measures arithmetic complexity of orbits of subvarieties.
Very recently, Jiarui Song established basic properties of higher arithmetic degrees
and gave a conceptual proof of the inequality $ \overline{\alpha}_{f}(x) \leq \d_{1}(f)$
for arbitrary dominant rational maps on normal projective variety over a number field \cite{So23}.
Moreover, he gave a counter example to DGHLS's higher dimensional version of Kawaguchi-Silverman conjecture
using the morphism that produces a counter example to the original dynamical Manin-Mumford conjecture.

\section{Future directions}

Let us conclude this survey by discussing potential future directions.

The most challenging and crucial case of the Kawaguchi-Silverman conjecture might be the dominant rational maps on the projective space $\P^{N}_{\QQ}$. 
The projective space has densely many rational points and a rich structure of self-maps. 
Another demanding case is self-maps on Calabi-Yau varieties. 
In broad terms, Calabi-Yau varieties are one of the building blocks of varieties of Kodaira dimension zero, 
alongside Abelian varieties and Hyper-K\"ahler varieties. 
The Kawaguchi-Silverman conjecture is fully resolved for Abelian varieties, 
and for Hyper-K\"ahler varieties, it is solved for all surjective self-morphisms. 
However, we lack such general results for Calabi-Yau varieties and even the solution for automorphism case would be
very interesting (See \cref{thm:CY-to-all}). 
It's worth mentioning that Calabi-Yau varieties are also expected to have many rational points and exhibit a rich dynamical structure. 

Solving the Kawaguchi-Silverman conjecture in full generality may be quite difficult, 
but exploring, for instance, the cohomologically hyperbolic case could be intriguing. 
It would also be interesting to examine how prevalent cohomologically hyperbolic maps are. 
Given that they are defined by the non-coincidence of successive dynamical degrees, 
it is reasonable to anticipate the existence of many such maps, particularly on specific varieties such as projective spaces.

The Case TIR (\cref{CaseTIR}), which emerged during the study of the Kawaguchi-Silverman conjecture, 
is interesting from the viewpoint of the structure theory of surjective self-morphisms on projective varieties as well. 
It is predicted that the Case TIR does not occur, but this remains an open question in general to the best of my knowledge.

In addition to proving the equality $ \alpha_{f}(x) = \d_{1}(f)$, 
establishing the existence of the arithmetic degree for arbitrary dominant rational self-maps presents a significant challenge. 
This is tied to a deeper understanding of the sequence $h_{H}(f^{n}(x))$. 
For surjective self-morphisms on projective varieties, this sequence is well-understood, 
at least when $ \alpha_{f}(x) > 1$ (See \cite[Theorem 1.1]{Sa18}). 
For rational maps, this sequence behaves wildly (See \cite[Example 3.1]{Li19}), making it intriguing to discern some patterns.

Investigating the function field analogue of the Kawaguchi-Silverman conjecture could also be of interest. 
When the coefficient field of the function field is infinite, some additional assumptions are necessary to avoid ``constant points", 
but it seems reasonable to expect that the same essentially holds. 
Refer to \cref{rmk:other-fld} and \cite{MSS18b,Dim15,Xi23} for more on this direction.

As demonstrated in \cite{JSXZ21,MW22}, the study of arithmetic degrees has applications to the Zariski dense orbit conjecture. 
The arithmetic degree serves as a measure of the complexity of the orbits, meaning that, 
under certain additional assumptions on the geometric structure of the self-maps, 
points with maximal arithmetic degrees have dense orbits. 
Pursuing this direction and discovering further applications to the Zariski dense orbit conjecture would be highly fascinating.

\begin{ack}
The author would like to thank 
Mattias Jonsson, John Lesieutre, Sheng Meng, Keiji Oguiso, Kaoru Sano, Matt Satriano, Takahiro Shibata, Joe Silverman, Elizabeth Wulcan, Junyi Xie, 
Shou Yoshikawa and D.Q. Zhang
for valuable comments on the manuscript of this survey and answering his questions.
Some of the works related to the equivariant MMP referred in this survey started during the Simons symposium 
``Algebraic, Complex and Arithmetic Dynamics (2019)''.
The author would like to thank Simons foundation and the organizers, Laura DeMarco and Mattias Jonsson,
for  providing a stimulating and collaborative environment.
\end{ack}

\bibliographystyle{abbrv}

\bibliography{KSC}

\end{document}